\numberwithin{equation}{section}
\newtheorem{theorem}{Theorem}[section]
\newtheorem{corollary}[theorem]{Corollary}
\newtheorem{lemma}[theorem]{Lemma}
\newtheorem{proposition}[theorem]{Proposition}
\newtheorem{definition}[theorem]{Definition}
\newtheorem{example}[theorem]{Example}
\newtheorem{hypothesis}[theorem]{Hypothesis}
\theoremstyle{remark}
\newtheorem{remark}[theorem]{Remark}
\newcommand{\N}{\mathbb{N}} 
\newcommand{\Z}{\mathbb{Z}}
\newcommand{\R}{\mathbb{R}} 
\newcommand{\teta}{\vartheta}
\newcommand{\dd}{\,\mathrm{d}}
\newcommand{\e}{\varepsilon}
\newcommand{\eps}{\varepsilon}
\newcommand{\weaksto}{{\rightharpoonup^*}\,}
\newcommand{\weakto}{\rightharpoonup}
\newcommand{\Restr}[1]{\lower2pt\hbox{$|_{#1}$}}
\newcommand{\argmin}{\mathop{\mathrm{Argmin}}}
\newcommand{\ene}[2]{\mathcal{E}(#1, #2)}
\newcommand{\Varname}[1]{{\mathrm{Var}}_{#1}}
\newcommand{\Var}[4]{\mathrm{Var}_{#1}(#2; [#3,#4])}
\newcommand{\BV}{\ensuremath{\mathrm{BV}}}
\newcommand{\AC}{\mathrm{AC}}
\newcommand{\vvmV}[3]{
  \fre_{#1}(#2)
  \ifthenelse{\equal{#3}{}} {} {\|#3\|} }
\newcommand{\FNormname}[1]{\mathscr{P}}
\newcommand{\Leb}[1]{\mathscr{L}^{#1}}
\newcommand{\RIS}{\ensuremath{( \psiri,\bptname,\mathcal{E})}}
\newcommand{\Cost}[5]{\Delta_{#1,#2}(#3; #4, #5)}
\newcommand{\essJ}{\mathop{\text{\upshape ess-J}}\nolimits}
\newcommand{\foraa}{\text{for a.a.\,}}
\newcommand{\down}{\downarrow}
\newcommand{\up}{\uparrow}
\newcommand{\rmC}{\mathrm{C}}
\newcommand{\rmD}{\mathrm{D}}
\newcommand{\Vtaue}[1]{\mathrm{V}_{\kern-1pt\tau,\eps}^{#1}}
\newcommand{\Xitaue}[1]{\Xi_{\kern-1pt\tau,\eps}^{#1}}
\newcommand{\piecewiseConstant}[2]{\overline{\mathrm{#1}}_{\kern-1pt#2}}
\newcommand{\underpiecewiseConstant}[2]{\underline{\mathrm{#1}}_{\kern-1pt#2}}
\newcommand{\piecewiseLinear}[2]{\mathrm{#1}_{\kern-1pt#2}}
\newcommand{\pwM}[2]{\widetilde{\mathrm{#1}}_{\kern-1pt#2}}
 \def\trait #1 #2 #3 {\vrule width #1pt height #2pt depth #3pt}
\newcommand{\fre}{\mathfrak e}
\newcommand{\Diss}[2]{
  \ifthenelse      {   \equal{#1}{V}  }         {\Psiv}
  {
    \ifthenelse      {   \equal{#1}{B}  }         {\Psiz}
    {
      \ifthenelse      {   \equal{#1}{\Bo}  }     {\Psiz}
      {\Psi_{\kern-1pt#1}}
      }
    }
  ^{#2}
}
\newcommand{\Dnorm}[1]{\Psiz_{\kern-2pt \scriptscriptstyle\land}(#1)}
\newcommand{\Dnormname}{\Psiz_{\kern-2pt \scriptscriptstyle\land}}
\newcommand{\cE}{\mathcal{E}}
\newcommand{\calE}{\cE}
\def\nchi{{\raise.3ex\hbox{$\chi$}}}
\newcommand{\Bo}{{}}
\newcommand{\rmV}{\mathrm V}
\newcommand{\scalarV}[3]{\rmV_{\!#2}
  \ifthenelse{\equal{#3}{}}{}{(#3)}}
\newcommand{\scalarmu}[2]{\mu}
\newcommand{\scalarmuco}[2]{\mu_{\rm d}}
\newcommand{\scalarmuj}[2]{\mu_{\rm J}}
\newcommand{\scalarmul}[2]{\mu_{\mathscr L}}
\newcommand{\scalarmuc}[2]{\mu_{\rm C}}
\newcommand{\Psiv}{\Phi}
\newcommand{\Psiz}{\Psi}
\def\ss{\mathsf s}
\def\st{\mathsf t}
\def\su{\mathsf u}
\def\e{\varepsilon}
\def\J{\mathcal{J}}
\def\f{_{\infty}}
\def\E{\mathcal{E}}
\def\gl{\Gamma\text{-}}
\newcommand{\QED}{\mbox{}\hfill\rule{5pt}{5pt}\medskip\par}
\newcommand{\bpt}[3]{\mathsf{p} (#1,#2,#3)}
\newcommand{\bptname}{\mathsf{p}}
\newcommand{\bptnew}[3]{\mathsf{p} (#1,#2,#3)}
\newcommand{\Jfu}[2]{\mathscr{J}_{#1}(#2)}
\newcommand{\Jfuname}[1]{\mathscr{J}_{#1}}
\newcommand{\tilJfunname}[1]{\tilde{\mathscr{J}}_{#1}}
\newcommand{\bip}[4]{\mathsf{b}_{#1}(#2,#3,#4)}
\newcommand{\bipd}[5]{\mathsf{b}_{#1}^{#2}(#3,#4,#5)}
\newcommand{\bipname}[1]{\mathsf{b}_{#1}}
\newcommand{\bipnamed}[2]{\mathsf{b}_{#1}^{#2}}
\newcommand{\Ctc}[1]{\Lambda_{#1}}
\newcommand{\tCtc}[1]{\widetilde{\Lambda}_{#1}}
\newcommand{\co}{\mathrm{co}}
\newcommand{\ju}[1]{\mathrm{J}_{#1}}
\newcommand{\Jump}[4]{\mathrm{Jmp}_{#1}(#2; [#3,#4])}
\newcommand{\Ca}{{\rm C}}
\newcommand{\Le}{\mathscr{L}}
\newcommand{\rmH}{\mathrm{H}}
\newcommand{\bx}{x}
\newcommand{\be}{\mathbf e}
\newcommand{\bw}{w}
\newcommand{\bv}{v}
\newcommand{\zzeta}{\zeta}
\newcommand{\bxi}{\xi}
\newcommand{\btheta}{\theta}
\newcommand{\psiri}{\Psi_0}
\definecolor{ddmagenta}{rgb}{0.7,0,0.9}
\definecolor{ddcyan}{rgb}{0,0.2,1.0}
\newenvironment{rnew}{\color{red}}{\color{black}}
\newcommand{\berin}{\begin{rnew}}
\newcommand{\erin}{\end{rnew}}
\newenvironment{ric}{\color{ddcyan}}{\color{black}}
\newcommand{\beric}{\begin{ric}}
\newcommand{\eric}{\end{ric}}
\newcommand{\RRR}{\color{black}}
\newcommand{\EEE}{\color{black}}
\newcommand{\HIDE}{\color{ddcyan}}
\begin{document}

\title[Generation  of Balanced
Viscosity solutions to rate-independent systems]
{Generation via variational convergence of Balanced
Viscosity solutions to rate-independent systems}

\author{Giovanni A. Bonaschi}
\address{Giovanni A. Bonaschi \\ Technische Universiteit Eindhoven, 5600 MB, Eindhoven, The Netherlands, and
\\
 AVIVA, Milano, Italy.}
\email{giovanni.bonaschi\,@\,gmail.com}

\author{Riccarda Rossi}
\address{Riccarda Rossi \\ DIMI, Universit\`a di
  Brescia, via Branze 38, I--25133 Brescia, Italy.}
\email{riccarda.rossi\,@\,unibs.it}


\date{October 09, 2017}

\thanks{G.A.B.\ kindly acknowledges support from the Nederlandse Oxrganisatie voor Wetenschappelijk Onderzoek (NWO),  VICI Grant 639.033.008}
\thanks{R.R.\
has  been partially supported  by GNAMPA (Gruppo Nazionale per l'Analisi Matematica, la Probabilit\`a e le loro Applicazioni)  of  INdAM (Istituto Nazionale di Alta Matematica).}

\begin{abstract}
In this paper we investigate the origin of the Balanced Viscosity solution concept for  rate-independent evolution in the setting of a finite-dimensional space. Namely, given a family of dissipation potentials $(\Psi_n)_n$ with superlinear growth at infinity and a smooth energy functional $\calE$, we enucleate sufficient conditions on them ensuring that the associated gradient
systems $(\Psi_n,\calE)$  \emph{Evolutionary Gamma-converge}, cf.\
\cite{Mie-evolGamma},  to a limiting rate-independent system, understood in the sense of  \emph{Balanced Viscosity} solutions. In particular, our analysis encompasses both  the \emph{vanishing-viscosity} approximation of rate-independent systems  from  \cite{MRS10,mielke-rossi-savare2013},
 and their \emph{stochastic} derivation  developed in 
\cite{BonaschiPeletier14}. 
\end{abstract}
\maketitle

\noindent {\bf Key words:}\  Gradient Systems, Rate-Independent Systems,  Balanced Viscosity solutions,
Vanishing Viscosity, Large Deviations, 
 Variational Convergence.

\section{\bf Introduction}
Over the last years,
rate-independent systems 
have been the object of intensive mathematical investigations. This is undoubtedly due to their vast range of applicability.
Indeed, this kind of processes seems to be ubiquitous in continuum mechanics, ranging from shape memory alloys to crack propagation, from elastoplasticity to 
damage and  delamination. 
They also occur  in fields such as ferromagnetism and   ferroelectricity.   We refer to \cite{Miesurvey, MieRouBOOK} for   a thorough survey  of all these problems. 
\par
Besides its applicative  relevance, though,  rate-independent evolution has an own, intrinsic, mathematical interest. This is  apparent already  in the context of a \emph{finite-dimensional} rate-independent system, 
driven
 by a \emph{dissipation potential} $\psiri : \R^d \to [0,+\infty)$ (non-degenerate), convex, and   positively homogeneous of degree $1$, and 
 an \emph{energy functional} $\calE : [0,T]\times \R^d \to \R$; in particular,  throughout the paper, we will consider a smooth energy
$\calE$
 such that the power \RRR function \EEE $\partial_t \calE $ is controlled by  $\calE$ itself,
  namely
\begin{equation}
\label{smooth-energy}
\tag{$\textsc{E}$}
\calE \in \rmC^1 ([0,T]\times\R^d)\quad  \text{ and  } \quad \exists\, C_1,\, C_2>0 \ \
\forall\, (t,u) \in [0,T]\times \R^d \, : \qquad |\partial_t \ene tu | \leq C_1 \ene tu + C_2\,.
\end{equation}
 The pair $(\psiri,\calE) $ give rise to the simplest example of rate-independent evolution, namely  the
gradient system
\begin{equation}
\label{simplest-ris-intro}
\partial\psiri(u'(t)) + \rmD \ene t{u(t)} \ni 0 \qquad \foraa\, t \in (0,T),
\end{equation}
where $\partial\psiri: \R^d \rightrightarrows \R^d $ is the subdifferential of $\psiri$ in the sense of convex analysis, whereas $\rmD \calE$ is  the differential of 
the map $u \mapsto \ene tu$. 
Due to the $0$-homogeneity of $\partial\psiri$, \eqref{simplest-ris-intro} is invariant for time rescalings, i.e.\
it is 
 rate-independent. 
Now, it is well known that, even in the case of a \emph{smooth} energy $\calE$, if $u\mapsto \ene tu$ fails to be strictly convex, then absolutely continuous
solutions to \eqref{simplest-ris-intro} need not exist. 
In the last two decades, this has motivated  the development of various weak solvability concepts for \eqref{simplest-ris-intro} and, in general, for rate-independent
systems in infinite-dimensional Banach spaces, or even topological spaces.  The analysis of these solution notions has posed several challenges.
\paragraph{\bf \emph{Energetic} and \emph{Balanced Viscosity} solutions to rate-independent systems.}
While referring to  \cite{Miel08?DEMF} and  \cite{MieRouBOOK} for a  survey of all weak  notions of   rate-independent evolution, we may recall here the concept  of \emph{Energetic solution}, 
first proposed in  \cite{Mie-Theil99}  (cf.\ also \cite{DMToa02}  for the concept of \emph{quasistatic evolution} in fracture),  and fully analyzed in \cite{Mielke-Theil04}. 
It consists  of 
the \emph{global stability}  condition
\begin{equation}
  \label{eq:1-intro}
  \forall\, z\in \R^d:\qquad \ene t{u(t)}\le \ene
  tz+\psiri(z-u(t)) \qquad \text{for every } t \in [0,T], 
  \tag{$\mathrm{S}$}
\end{equation}
and of the $(\psiri,\calE)$-\emph{energy balance}
\begin{equation}
  \label{eq:2-intro}
  \ene{t}{u(t)}+\Var{\psiri}u{0}{t}=\ene{0}{u(0)}+
  \int_{0}^{t} \partial_t\ene s{u(s)}\,\mathrm{d}s \qquad \text{for every } t \in [0,T],
  \tag{$\mathrm{E}_{\psiri,\calE}$}
\end{equation}
involving the dissipated energy $\Var{\psiri}u{0}{t}$ (where $\Varname{\psiri}$ denotes the notion 
of total variation induced by $\psiri$), the stored energy $\ene t{u(t)}$ at the process time $t$, the initial energy $\ene 0{u(0)}$,  
 and the work of the external forces.
 Since the energetic formulation
\eqref{eq:1-intro}--\eqref{eq:2-intro} only
features the (assumedly smooth) power of the external forces
$\partial_t \mathcal{E}$, and no other derivatives,
it is particularly suited to solutions with  discontinuities in time. It is also 
considerably flexible and can be indeed given for rate-independent processes 
in general topological spaces, cf.\ \cite{MainikMielke05}.  That is why, it has been exploited in a great variety  of applicative contexts, cf.\  \cite{Miesurvey,MieRouBOOK}. 

Nonetheless,  over the years it has become apparent that, in the very case of a \emph{nonconvex} dependence $u\mapsto \ene tu$,  the \emph{global} stability
\eqref{eq:1-intro} fails  provide a truthful description of  the system behaviour at jumps, leading to solutions jumping `too early' 
and `too long' (i.e.\ into very far-apart energetic configurations), as shown
for instance
 by the examples   \cite[Ex.\,6.1]{Miel03EFME},
and
\cite[Ex.\,1]{MRS09}, and by  the characterization of energetic solutions to
(one-dimensional) rate-independent systems in  \cite{Rossi-Savare13}. 
\par
This circumstance 
has led to  the introduction of \emph{alternative} weak solvability concepts for \eqref{simplest-ris-intro}
 and its generalizations. The focus of this paper is on the notion of \emph{Balanced Viscosity} solution, first introduced in \cite{MRS10}   for a  \emph{finite-dimensional} rate-independent system and later extended to the infinite-dimensional case in \cite{mielke-rossi-savare2013}.
 The origin of this concept in fact goes back to the seminal paper \cite{ef-mie06}, which first set forth \emph{vanishing viscosity} as a selection criterion for 
 mechanically feasible weak solution notions to rate-independent systems.  The vanishing-viscosity approach has in fact proved to be a robust method in manifold applications, e.g.\ ranging from plasticity
 \cite{DalDesSol11,BabFraMor12,FrSt2013}, to  fracture  \cite{KnMiZa07?ILMC,LazzaroniToader}, and to
 damage  \cite{krz, CL16} models.  We also refer to \cite{Negri14} for an alternative derivation of Balanced Viscosity solutions via time discretization. 
 \par
 Let us briefly illustrate the vanishing-viscosity approach:
 We ``augment by viscosity''  the dissipation potential $\psiri$ and thus introduce 
\begin{equation}
\label{eps-pot-intro}
\Psi_\eps(v): = \psiri(v) + \frac{\eps}{2} \|v\|^2,
\end{equation} 
 with $\| \cdot \|$ a second norm on $\R^d$, possibly coinciding with $\psiri$, and the corresponding gradient system $(\Psi_\eps,\calE)$, namely the doubly nonlinear equation
 \begin{equation}
 \label{grsys-eps-intro}
 \partial \Psi_\eps(u'(t)) + \rmD \ene t{u(t)} \ni 0 \qquad \foraa\, t \in (0,T). 
 \end{equation}
 Since $\Psi_\eps$ has superlinear growth at infinity, 
  \eqref{grsys-eps-intro} does admit absolutely continuous solutions. It is to be expected that, as the viscosity parameter $\eps$ vanishes, 
  solutions $(u_\eps)_\eps$ to 
\eqref{grsys-eps-intro} will converge to a suitable weak solution to the rate-independent system \eqref{simplest-ris-intro}. In \cite{MRS10} 
it was indeed shown that any limit curve $u\in \BV ([0,T];\R^d)$ of the functions $(u_\eps)_\eps$ complies with 
the stability condition 
  \begin{equation}
    \label{eq:65bis-intro}
    \tag{$\mathrm{S}_\mathrm{loc}$}
    -\rmD\ene t{u(t)}\in K^*: = \partial \psiri(0) \quad \foraa\, t \in (0,T), 
  \end{equation}
  and with the energy balance 
  \begin{equation}
    \label{eq:84-intro}
    \Var{\psiri,\mathfrak{p},\calE}u{0}{t}+\ene{t}{u(t)}=\ene{0}{u(0)}+
    \int_{0}^{t} \partial_t\ene s{u(s)}\,\dd s \quad \text{ for all } t\in [0,T]\,.
    \tag{$\mathrm{E}_{\psiri, \mathfrak{p},\calE}$}
  \end{equation}
  Although  \eqref{eq:65bis-intro} \& \eqref{eq:84-intro} look similar to  \eqref{eq:1-intro} \&  \eqref{eq:2-intro}, they are in fact significantly different. 
  First of all,  \eqref{eq:65bis-intro}
 is in fact a \emph{local} version of the global stability \eqref{eq:1-intro}. Secondly, \eqref{eq:84-intro} shares the same structure with the energy balance  \eqref{eq:2-intro}, but it features a notion of total variation involving, in addition to  the dissipation potential 
  $\psiri$, the \emph{vanishing-viscosity contact potential}
  \begin{equation}
  \label{vvcp-intro}
  \mathfrak{p}(v,\xi): = \inf_{\eps>0} \left( \Psi_\eps(v) + \Psi_\eps^*(\xi)\right) = \psiri(v) + \|v\| \min_{\zeta\in K^*}\|\xi-\zeta\|_{*},
  \end{equation}
 (with $K^*$ from \eqref{eq:65bis-intro}   and $\| \cdot\|_*$  the dual norm of $\| \cdot\|$).  
  While referring to Section \ref{s:3} for the precise definition of 
  $ \Varname{\psiri,\bptname,\calE}$,
cf.\ \eqref{pseudo-Var},
   we may mention here that  $\mathfrak{p}$ indeed encodes how viscosity, neglected in the vanishing-viscosity limit,
  pops back into the description of the solution behaviour at jumps, whereas in the continuous (`sliding') regime, the system is only governed by the dissipation $\psiri$. 
  \par
    A characterization of Balanced Viscosity solutions, again for one-dimensional systems, 
  has been provided in \cite{Rossi-Savare13}, showing that they model  jumps more accurately than energetic solutions. On the other hand, as evident from \eqref{vvcp-intro}, this notion seems to be strongly reminiscent of the vanishing-viscosity approximation \eqref{grsys-eps-intro}. 
  \par
  It is thus natural to wonder if there are ways, \emph{alternative} to the vanishing-viscosity 
  \cite{MRS10, mielke-rossi-savare2013} and to the time-discretization \cite{Negri14} approaches,  
  to generate 
  Balanced Viscosity solutions. 
  \paragraph{\bf The stochastic origin of Balanced Viscosity solutions.}
  Recently, this question has been answered affirmatively in \cite{BonaschiPeletier14}, investigating the role of stochasticity in the origin of rate-independence,
  in the \emph{one-dimensional} setting (we refer to \cite{MPR14} for analogous results on the origins of generalized gradient structures).    
   More
  specifically,  \cite{BonaschiPeletier14} has  focused on a continuous-time Markov jump process
  $t\mapsto X_t^h$ on a one-dimensional lattice, with lattice spacing $\frac1h$, $h\in \N$.  While referring to  Section \ref{s:2} for more details, we may mention here
  that this process
  models the  evolution of a Brownian particle in a wiggly energy landscape, involving the energy $\calE$,   in the following way.
  If the particle is at the position $x$ at time $t$, then it jumps in continuous time to  its neighbours $x\pm \frac1h$ with rates $h r^{\pm}(x)$, where
  $r^\pm(x)   = \alpha \exp(\mp \beta \rmD \ene tx)$. Here, $\alpha$ and $\beta$ are positive parameters, 
  the former characterizing the rate of jumps, and thus the global time scale of the process, and the latter related to noise. 
  \par
 First of all,  in \cite{BonaschiPeletier14} it was shown that the deterministic limit, in a `large-deviations' sense,   as $h\to\infty$ and for $\alpha$ and $\beta$ fixed, 
  of this stochastic process solves the gradient system
  \[
  u'(t) = 2\alpha \sinh(-\beta \rmD \ene t{u(t)}) \qquad \foraa\, t \in (0,T).
  \]
  Observe that the latter is a reformulation of the 
 doubly nonlinear evolution equation 
  \begin{equation}
  \label{gr-alphabeta-intro}
  \partial \Psi_{\alpha,\beta}(u'(t)) + \rmD \ene t{u(t)} \ni0 \qquad  \foraa\, t \in (0,T),
  \end{equation}
   where the dissipation potential $\Psi_{\alpha,\beta}$ is such that its Fenchel-Moreau convex conjugate 
  fulfills
  $\partial\Psi_{\alpha,\beta}^* (\xi) = \{ \rmD \Psi_{\alpha,\beta}^* (\xi) \} = \{2\alpha \sinh(\beta\xi)\}$.
  More precisely, in \cite{BonaschiPeletier14} it was proved that the process $X^h$ satisfies a \emph{large deviations principle}, with rate function 
  given by the functional of trajectories $\tilJfunname{\Psi_{\alpha,\beta}, \calE}: \BV ([0,T];\R^d) \to  [0,+\infty]$ 
 defined by
  \[
 \tilJfunname{\Psi_{\alpha,\beta}, \calE} (u) : = 
 \beta \left( \int_0^T \left( \Psi_{\alpha,\beta} (u'(t)) + \Psi_{\alpha,\beta}^*(-\rmD \ene t{u(t)})  \right) \dd t + \ene T{u(T)} - \ene 0{u(0)} - \int_0^T \partial_t \ene t{u(t)} \dd t  \right) 
 \]  
  if $u\in \AC([0,T];\R^d)$, and $+\infty$ else.  It is easy to check that the (null-)minimizers of $ \tilJfunname{\Psi_{\alpha,\beta}, \calE} $ are solutions to 
  the gradient system
  \eqref{gr-alphabeta-intro}.
  \par
  Next, the \emph{variational} limits of the functionals $\tilJfunname{\Psi_{\alpha,\beta}, \calE} $ have been  addressed under different scalings of the parameters $\alpha $ and $\beta$, leading to  gradient flow or  rate-independent evolution. 
  To illustrate the result in the  latter case, here and throughout the paper we will confine the discussion
  to the following choice of parameters: $\alpha=\alpha_n := \frac{e^{-nA}}2$ and $\beta: = \beta_n =n$, with $n\in\N$. 
  Therefore, 
   the associated dissipation potentials
  are given by
  \begin{equation}
  \label{psi-ld-intro}
 \Psi_n(v): = \Psi_{\alpha_n,\beta_n}(v) =   \frac{v}{n} \log \left( \frac{ v + \sqrt{v^2 + e^{-2n A}}}{e^{-n A}} \right) - \frac{1}{n}\sqrt{v^2 + e^{-2n A}} + \frac{e^{-n A}}{n}.
  \end{equation}
   In \cite[Thm.\ 4.2]{BonaschiPeletier14} it was then proved that that the 
  functionals $\Jfuname{\Psi_n,\calE}: = \frac1n  \tilJfunname{\Psi_n, \calE}$ converge in the sense of \textsc{Mosco}, 
  with respect to the \emph{weak-strict} topology in 
  $ \BV ([0,T];\R^d) $, to 
  the functional  $  \Jfuname{\psiri,\mathfrak{p},\calE} : \BV([0,T];\R^d) \to [0,+\infty]$ defined by
  \begin{equation}
  \label{Jfu-BV-intro}
  \begin{aligned}
  \Jfu{\psiri,\mathfrak{p},\calE}u:   =
 & \Var{\psiri,\mathfrak{p},\calE}{u}{0}T +  \int_0^T  I_{K^*}(-\rmD \ene t{u(t)}) \dd t   
 \\ & \quad 
 +\ene{T}{u(T)} - \ene{0}{u(0)} -     \int_{0}^{T} \partial_t\ene s{u(s)}\,\dd s,
 \end{aligned}
\end{equation}
with $\psiri(v) = A|v|$, $\mathfrak{p}$ given by \eqref{vvcp-intro}
 and the associated total variation functional  $ \Varname{\psiri,\bptname,\calE}$ defined in
   \eqref{pseudo-Var} ahead, 
  and with 
$I_{K^*} $ denoting the indicator function of the set $K^*=[-A,A]$. Recall that \textsc{Mosco}-convergence (cf.\ e.g.\ \cite{Attouch}) 
  with respect to the weak-strict topology in 
  $ \BV ([0,T];\R^d)$
means that  
\begin{equation}
\label{Mosco}
\begin{aligned}
 & \text{\emph{(i)}} && u_n \to u \text{ weakly in $\BV([0,T];\R^d)$ }  \  \Rightarrow \  \liminf_{n\to\infty} \Jfuname{\Psi_n,\calE}(u_n) \geq  \Jfu{\psiri,\mathfrak{p},\calE}u,
\\
 & \text{\emph{(ii)}} && \forall\, u \in \BV([0,T];\R^d) \ \exists\, (u_n)_n \subset  \BV([0,T];\R^d) \text{ s.t. }   
 \begin{cases}
 &
 u_n\to u \text{ strictly in $\BV([0,T];\R^d)$ }, 
 \\
 &
 \limsup_{n\to\infty} \Jfuname{\Psi_n,\calE}(u_n) \leq  \Jfu{\psiri,\mathfrak{p},\calE}u\,.
\end{cases}
\end{aligned}
\end{equation} 
  Since the (null-)minimizers of $ \Jfuname{\psiri,\mathfrak{p},\calE}$ are Balanced Viscosity solutions of the rate-independent system driven by $\psiri$ and $\calE$ (cf.\ 
  Proposition \ref{prop:null-minim} ahead),  
  \cite[Thm.\ 4.2]{BonaschiPeletier14}  ultimately establishes a connection between the jump process $X^h$ and the latter  rate-independent system, understood in a \emph{Balanced Viscosity} sense. Furthermore, observe that the functionals $\Psi_n$ from \eqref{psi-ld-intro} are not of the form \eqref{eps-pot-intro}. Therefore,
  this result provides a way, alternative to vanishing viscosity, to generate  Balanced Viscosity solutions.
  \paragraph{\bf Our results.}
  The aim of this paper is twofold.   
  \par
  First of all, we intend to extend the `stochastic generation' of Balanced Viscosity solutions investigated in \cite{BonaschiPeletier14}, to the \emph{multi-dimensional}
  rate-independent system \eqref{simplest-ris-intro}, where now
  \[
  \psiri(v): = A\|v\|_1   \text{ for all } v \in \R^d,   \qquad \text{with } \|v\|_1: = \sum_{i=1}^d |v_i|\,.
  \]
Even conjecturing that the viscosity contact potential defining  the limiting Balanced Viscosity solution notion 
 is of the form 
\eqref{vvcp-intro}, 
 in the multi-dimensional case it is no longer obvious which choice of the viscous norm $\| \cdot \|$ should enter into 
\eqref{vvcp-intro}. Indeed, 
with \textbf{our main results,} \underline{\bf Theorem \ref{thm:liminf}} ($\liminf$-estimate) and \underline{\bf Thm.\ \ref{thm:limsupgenerale}}  ($\limsup$-estimate), 
we will show that  the  multi-dimensional analogues of   the functionals $(\Jfuname{\Psi_n,\calE})_n $ \textsc{Mosco}-converge, with respect to the weak-strict topology of $\mathrm{BV}([0,T];\R^d)$,  to the functional $ \Jfuname{\psiri,\mathfrak{p},\calE}$ featuring the contact potential
\begin{equation}
\label{p-limit-intro}
\mathfrak{p}(v,\xi) := \|v\|_1 (A \vee \| \xi\|_\infty) \qquad \text{with }  \| \xi \|_\infty :=
\max_{i=1, \ldots, d} |\xi_i |.
\end{equation}
 It can be checked that $\mathfrak{p}$ is indeed of the form \eqref{vvcp-intro},  with  the `viscous' norm 
$\| \cdot \|$ in fact coinciding with that associated with $\psiri$, i.e.\
 $\|v\|= \psiri(v) =A\|v\|_1 $. Namely,  the notion of 
 Balanced Viscosity solution arising from the stochastic approximation coincides with the one obtained by 
 vanishing \emph{$\psiri$-viscosity},  cf.\  Example \ref{ex:van-visc} ahead. 
 \par
 Secondly, we shall investigate on a more general and deeper level the origin of rate-independent evolution in a Balanced Viscosity sense. More precisely, 
 \begin{compactitem}
 \item
  we
 will introduce an `extended' notion of Balanced Viscosity solution, induced by  a general  \emph{viscosity contact potential} $\bptname : [0,+\infty) \times \R^d \times \R^d \to [0+\infty]$, 
$ \bptname = 
 \bpt \tau v \xi$, cf.\ Def.\ \ref{def:visc-cont-pot} ahead,   such that 
 the contact potentials $\mathfrak{p}(v,\xi) $ from \eqref{vvcp-intro} are obtained for $\tau=0$, i.e.\   $\bpt 0 v \xi = \mathfrak{p}(v,\xi)$ (i.e., $\mathfrak{p}$ is  augmented of the time variable);
 \item
  we  will enucleate a series  of  conditions under which a sequence $(\Psi_n)_n$ of \emph{general} dissipation potentials with superlinear growth at infinity, 
 not necessarily of the form \eqref{eps-pot-intro} (vanishing-viscosity) or \eqref{psi-ld-intro}  (stochastic approximation), 
  give rise to a viscosity contact potential.  Such conditions will amount to requiring that 
 the 
 bipotentials 
 $b_{\Psi_n} : [0,+\infty) \times \R^d \times \R^d   \to [0+\infty]$,   associated with the functionals $\Psi_n$, and defined for $\tau>0$ by $b_{\Psi_n}(\tau,v,\xi): = \tau \Psi_n(\frac v\tau) + \tau \Psi_n^*(\xi)$   (cf.\  \eqref{def-bptreg}
 ahead),  converge in a suitable \emph{variational sense} to $\bptname$. 
 \item
 It will turn out (cf.\ Theorem \ref{thm:liminf}), that under this condition, joint with a suitable uniform coercivity requirement for the functionals $(\Psi_n)_n$, 
 the $\Gamma$-$\liminf$ estimate  in \eqref{Mosco} 
  for the associated trajectory functionals $(\Jfuname{\Psi_n,\calE})_n$ 
 holds.
 \item
  As we will see,  this  implies that 
 limit curves $u\in \BV ([0,T];\R^d)$ of sequences of solutions $(u_n)_n$ to the gradient systems $(\Psi_n,\calE)$ are Balanced Viscosity solutions to the rate-independent system $(\psiri,\bptname,\calE)$,  i.e.\  
  systems $(\Psi_n,\calE)_n$ Evolutionary $\Gamma$-converge, in the sense of \cite{Mie-evolGamma}, to $(\psiri,\bptname,\calE)$.  
 \end{compactitem}
 \par
Let us clarify that,
 the  $\liminf$-estimate in Theorem \ref{thm:liminf} will be valid in the  general setting specified  in the above lines,
and could be in fact trivially proved  for systems set in \emph{any} abstract  finite-dimensional Banach space $\mathbf{X}$. Instead, 
    in Theorem  \ref{thm:limsupgenerale}  we will  be  able to prove the $\Gamma$-$\limsup$ inequality only in the \emph{specific} cases 
of the vanishing-viscosity  and the stochastic approximation. 
\par
We believe that 
Theorem  \ref{thm:limsupgenerale}   could be extended to a broader class of dissipation potentials $\Psi_n$ with superlinear growth at infinity, like in the one-dimensional case (cf.\ \cite[Thm.\ 4.2]{BonaschiPeletier14}).
However, 
 the proof of the $\limsup$-estimate in the \emph{fully general} case, i.e.\ under the sole condition that the bipotentials $b_{\Psi_n}$ variationally converge to $\bptname$, remains an open problem.
\paragraph{\bf Plan of the paper.} In \underline{Section $2$} we discuss the multi-dimensional analogue of the stochastic model 
considered in \cite{BonaschiPeletier14}  and (formally) derive the associated dissipation potential $\Psi_n$ and the induced trajectory functional $\Jfuname{\Psi_n,\calE}$. 
\underline{Section \ref{s:3}} is devoted to some recaps on $\BV$ functions,  which are  preliminary to the introduction of the  \emph{extended}  notion of Balanced Viscosity solution to a rate-independent 
system $\RIS$ (cf.\ Definition \ref{def:BV}), with $\bptname$ a viscosity contact potential in the sense of Definition \ref{def:visc-cont-pot}. We conclude this section by enucleating some basic properties of  Balanced Viscosity solutions. 
In \underline{Section \ref{ss:3.3}} we address the generation of a  viscosity contact potential starting from a family $(\Psi_n)_n$ of dissipation potentials  with superlinear growth at infinity. Our main results, Theorems \ref{thm:liminf}  and \ref{thm:limsupgenerale}, are stated in \underline{Section \ref{s:main}} and proved throughout 
\underline{Section \ref{s:proofs}}. \\
 \paragraph{\bf  Acknowledgment.}  We would like to thank
 Mark Peletier   for suggesting this problem to us  and for his support. 
 We are deeply grateful to
  Giuseppe Savar\'e for several enlightening discussions, in particular on the results of Section 4.  


\section{\bf The stochastic origin of rate-independent systems}
\label{s:2}
\noindent 
In this section we briefly describe the 
 multi-dimensional 
extension of the one-dimensional stochastic model  for rate-independent evolution  
considered in \cite{BonaschiPeletier14}.
\par
 We consider a jump process $t\mapsto X^h(t)$ on a $d$-dimensional lattice, with
lattice spacing $\frac1h$. The evolution of the  process can be described as follows: Fix the origin as initial point.
If the process is at the position $x$ at time $t$, then it jumps in continuous time to its neighbours $(x\pm \frac1m \be_i)$ with rate $mr_i^{\pm}$,
for $i=1, \ldots, d$, where $(\be_1, \ldots, \be_d)$ is the basis of $\R^d$, 
cf.\
 Figure~\ref{fig:birth-death}. The jump rates depend on two parameters $\alpha$ and $\beta$, and on the 
  partial
 derivatives $ \rmD_i \E: = \rmD_{x_i} \E $ 
 of a smooth energy functional $\calE: [0,T]\times \R^d \to \R$, namely
 \begin{equation}
 \label{jump-rates}
 r^+_i (\bx,t) = \alpha e^{-\beta \rmD_i \E(\bx,t)) }, \quad r^-_i (\bx,t) = \alpha e^{\beta \rmD_i \E(\bx,t) ) } \qquad \text{for } i =1, \ldots, d. 
\end{equation}

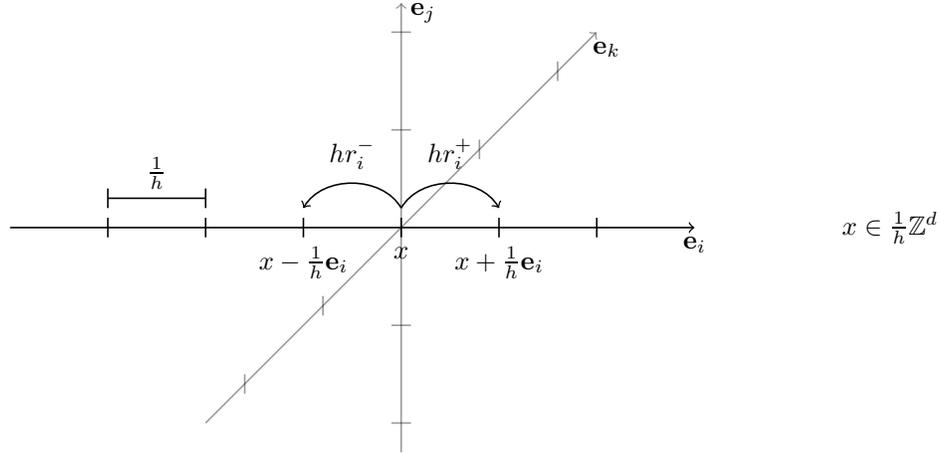
\begin{figure}[h]
\centering
\begin{tikzpicture}[scale=1.3]
\draw [->, semithick, opacity=.4] (-2,-2) -- (2,2);
\draw [semithick, opacity=.4] (.8,.7) -- (.8,.9);
\draw [semithick, opacity=.4] (1.6,1.5) -- (1.6,1.7);
\draw [semithick, opacity=.4] (-.8,-.7) -- (-.8,-.9);
\draw [semithick, opacity=.4] (-1.6,-1.5) -- (-1.6,-1.7);
\draw (2.1,2) node[anchor=north] {$\be_k$};
\draw [->, semithick, opacity=.4] (0,-2.3) -- (0,2.3);
\draw (0,2.2) node[anchor=west] {$\be_j$};
\draw [semithick, opacity=.4] (-.1,1) -- (.1,1);
\draw [semithick, opacity=.4] (-.1,2) -- (.1,2);
\draw [semithick, opacity=.4] (-.1,-1) -- (.1,-1);
\draw [semithick, opacity=.4] (-.1,-2) -- (.1,-2);
\draw [->, semithick, color=black] (-4,0) -- (3,0);
\draw (3,0) node[anchor=north] {$\be_i$};
\draw [semithick, color=black] (-3,.3) -- (-2,.3);
\draw [semithick, color=black] (-3,.4) -- (-3,.2);
\draw [semithick, color=black] (-2,.4) -- (-2,.2);
\draw (-1,-.1) node[anchor=north] {$x - \frac1h \be_i$};
\draw (0,-.1) node[anchor=north] {$x$};
\draw (1,-.1) node[anchor=north] {$x+\frac1h \be_i$};
\draw (-2.5,.3) node[anchor=south] {$\frac1h$};
\foreach \i in {-3,...,2}{
\draw [semithick, color=black] (\i,-.1) -- (\i,.1) ;}
\draw [->, semithick, color=black] (0,.2) to [out=120,in=60] (-1,.2);
\draw [->, semithick, color=black] (0,.2) to [out=60,in=120] (1,.2);
\draw (.5,1) node[anchor=north] {$h r_i^+$};
\draw (-.5,1) node[anchor=north] {$h r_i^-$};
\draw (5,0) node[anchor=center] {$x \in \frac1h \Z^d$};
\end{tikzpicture}
\caption{A sketch of the jump-process on the lattice.}
\label{fig:birth-death}
\end{figure}

The choice of  the stochastic process (and thus of the jump rates $r_i^\pm$)  reflects  Kramers' formula \cite{Kramers40,Berglund11TR,BonaschiPeletier14}. Given a particle evolving in a wiggly energy landscape with noise, this formula provides an estimate of the rate of jumps from  one energy well to the next one.
\par
We are interested in the continuum limit as $h\to\infty$. With this aim, we apply the method developed by 
\textsc{Feng \& Kurtz}, cf.\ \cite{FK06}, to prove large-deviations principles for Markov processes. 
\par
As in \cite[Sec.\ 2.5]{BonaschiPeletier14}, we will provisionally assume that the jump rates $r^\pm$ are constant in space and time, 
and thus derive the expression of the rate function, and then formally substitute \eqref{jump-rates} into it. 
Following \cite{FK06},  we consider the generator 
\[
\Omega_h f (\bx) := \sum_{i=1}^d \left[ \;h  r_i^+\left( f(\bx+ \frac1h \be_i)  - f(\bx) \right) + h r_i^-\left( f(\bx- \frac1h  \be_i) - f(\bx) \right) \right] 
\]
of the 
continuous time Markov process $X^h$, and the nonlinear generator
\[
\begin{aligned}
(\mathrm{H}_h f)(\bx): & = \frac1h e^{-h f(\bx)} (\Omega_h e^{hf}) (\bx)\\ &  = \sum_{i=1}^d \  \left[ \; r_i^+\left( \exp\left(h \left(  f(\bx+ \frac1h \be_i)  - f(\bx) \right) 
 \right)-1\right) +  r_i^-\left( \exp\left(h \left(  f(\bx- \frac1h \be_i)  - f(\bx) \right) 
 \right)-1\right) 
   \right]\,.
   \end{aligned}
\]
\par
According to the Feng-Kurtz method, 
if $\mathrm{H}_h$ converges to  some $\rmH$ in a suitable sense, and if
 the limiting operator 
 $\rmH f$ depends locally on $\rmD f$, we can then  define the \emph{Hamiltonian} $H = H(x,\xi)$
through
\[
(\mathrm{H} f)(x) = : H(x, \rmD f(x)),
\]
and the \emph{Lagrangian} as the Legendre transform  of $H$, namely
\[
L(x,v) : = \sup_{\xi\in\R^d} \left(\langle \xi, v \rangle - H(x,\xi) \right).
\]
Then, the Markov process satisfies a large-deviations principle, with rate function 
\begin{equation}
\label{Jfun-2}
\mathscr{J}(u):= \begin{cases}
\int_0^T L(u(t), u'(t)) \dd t & \text{if } u \in \AC ([0,T];\R^d),
\\
+\infty & \text{otherwise}, 
\end{cases}
\end{equation}
 cf.\ \cite[Sec.\ 2]{BonaschiPeletier14}. 
\par
In the present case,  it can be seen that 
 \[
 H(x,\xi)= \sum_{i=1}^d r_{i}^{+} (e^{\xi_i}-1) + r_{i}^{-} (e^{-\xi_i}-1).
 \]
Then $L$ is given by
\begin{equation}
\label{eq:defLagrangian}
\begin{split}
L(x,v) =
\sum_{i=1}^d \left[ v_i \log \left( \frac{v_i + \sqrt{v_i^2 + 4r^+_i r^-_i} }{2r^+_i} \right) - \sqrt{v_i^2 + 4r^+_i r^-_i} + r^+_i +r^-_i \right].
\end{split}
\end{equation}
Substituting in \eqref{eq:defLagrangian} 
the expression \eqref{jump-rates} for  the jump rates, and choosing the parameters
\[
\alpha = \frac{e^{- n A}}{2} \quad \text{and} \quad \beta =n, \qquad n \in \N,
\]
we obtain 
\begin{equation}
\label{our-lagra}
L(x,v) = n \left( \Psi_n (v) + \Psi_n^* (-\rmD \ene tx) +  v \rmD \ene tx \right),
\end{equation}
with $\Psi_n: \R^d \to [0,+\infty)$ given by 
\begin{equation}
\label{def:cosh_psi}
\Psi_n(\bv) = \sum_{i=1}^d \psi_n(v_i) =  \sum_{i=1}^d \frac{v_i}{n} \log \left( \frac{ v_i + \sqrt{v_i^2 + e^{-2n A}}}{e^{-n A}} \right) - \frac{1}{n}\sqrt{v_i^2 + e^{-2n A}} + \frac{e^{-n A}}{n},
\end{equation}
 and  $\Psi_n^*$ the Legendre transform of $\Psi_n$. It can be easily checked that the structure $\Psi_n(\bv) = \sum_{i=1}^d \psi_n(v_i)$
 transfers to the conjugate, hence
\begin{equation}
\label{def:cosh_psi*}
\Psi^*_n(\xi) = \sum_{i=1}^d \psi^*_n (\xi_i) = \sum_{i=1}^d \frac{e^{-n A}}{n} \left( \cosh ( n \xi_i ) - 1 \right).
\end{equation}
\par
Observe that,
with the choice \eqref{our-lagra} for $L$,  the (positive)
 functional $\mathscr{J}$
 from \eqref{Jfun-2} is minimized by the solutions of the ODE system
  (the subdifferential operator $\partial\psi_n^* :\R^d \rightrightarrows \R^d $ being single-valued) 
\[
u_i'(t) = - \mathrm{D} \psi^*_n (-\rmD_i  \calE(t, u_i(t)))  \qquad \foraa\,t \in (0,T), \text{  for all } i=1, \cdots , d.
\]

\section{\bf Viscosity contact potentials and Balanced Viscosity solutions to rate-independent systems}
\label{s:3}
\RRR This section is devoted to the notion of \emph{Balanced Viscosity} solution to a rate-independent system. Before introducing it and fixing its main properties, 
we recall some basic definitions from the theory of functions of bounded variation, and then focus on the crucial concept of \emph{viscosity contact potential}, which underlies the very definition of 
\emph{Balanced Viscosity} solution. \EEE
\subsection{Preliminary definitions}
\label{ss:3.1}
Hereafter, we will call
\emph{dissipation potential} any function
\begin{equation}
\label{dissip-potential}
\Psi: \R^d \to [0,+\infty)
\text{ convex and such that  } \Psi(0)=0.
\end{equation}
It follows from the above conditions that the Fenchel-Moreau conjugate $\Psi^*$ then fulfills $\Psi^*(0)=0 \leq \Psi^*(\xi)$ for all $\xi\in\R^d$.
\RRR We will distinguish two cases:
\begin{itemize}
\item[{\bf Dissipation potentials with superlinear growth at infinity}] i.e.\ fulflling 
\begin{equation}
\label{superlinear-infty} 
\lim_{\|v\|\to+\infty} \frac{\Psi(v)}{\|v \|} = +\infty
\end{equation}
for some norm $\| \cdot\|$ on $\R^d$. \EEE
\item[{\bf  $1$-homogeneous dissipation potentials}]
In what follows, we will denote by
$\psiri$
a dissipation potential
\begin{equation}
\label{rate-indep-pot} \psiri : \R^d \to [0,+\infty) \
\text{convex, $1$-positively homogenous, and non-degenerate, viz. } \psiri(\bv) >0 \ \text{ if } \bv \neq 0.
\end{equation}
Thus, for any norm $\| \cdot \|$ on $\R^d$
\begin{equation}
\label{equivalent}
\exists\, \eta>0 \ \forall\, \bv \in \R^d \, : \ \eta^{-1} \|\bv\| \leq \psiri(\bv) \leq \eta \|
\bv\|\,.
\end{equation}
Its convex-analysis subdifferential $\partial \psiri : \R^d
\rightrightarrows \R^d$  at $\bv \in \R^d$ can be characterized by
\begin{equation}
\label{charact-1homog}
\zzeta \in \partial\psiri(\bv) \ \Leftrightarrow \ \left\{
\begin{array}{ll}
\langle \zzeta, \bw \rangle \leq \psiri(\bw)  \text{ for all } \bw
\in \R^d,
\\
\langle \zzeta, \bv \rangle = \psiri(\bv).
\end{array}
\right.
\end{equation}
Throughout, we will use the notation
\begin{equation}
\label{Kstar} K^*: = \partial \psiri(0)\,.
\end{equation}
Recall that $\partial\psiri(\bv) \subset K^*$ for all $\bv \in \R^d$
and that, indeed, $\psiri$ is the \emph{support function} of $K^*$,
namely
\begin{equation}
\label{support-function} \psiri(\bv) = \sup_{\zzeta \in K^*} \langle
\zzeta, \bv \rangle, \quad \text{whence } \quad \psiri^*(\xi) =
I_{K^*}(\xi).
\end{equation}
\end{itemize} \EEE
\paragraph{\bf $\BV$ functions}
Throughout, we will work with
functions of bounded variation \emph{pointwise defined
  at every point $t\in [0,T]$}.
  We recall that   a function $u$ in $\BV([0,T];\R^d)$ admits left and right limits at every  $t\in [0,T]$:
  \begin{equation}
    \label{eq:41}
    u(t_-):=\lim_{s\up t}u(s),\ \
    u(t_+):=\lim_{s\down t}u(s),\ \ \text{with the convention }u(0_-):=u(0),\ u(T_+):=u(T),
  \end{equation}
  and its \emph{pointwise} jump set $\ju u$ is the at most countable set defined
  by
  \begin{equation}
    \label{eq:88}
    \ju u:=\big\{t\in [0,T]:u(t_-)\neq u(t)\text{ or }u(t)\neq u(t_+)\big\}\supset
    \essJ_u:=
    \big\{t\in [0,T]:u(t_-)\neq u(t_+)\big\}.
  \end{equation}
 We also recall that the distributional derivative $u'$ of $u$ is a Radon vector measure that can be decomposed (cf.\ \cite{Ambrosio-Fusco-Pallara00})
  into the
sum of the three mutually singular measures
\begin{equation}
  \label{eq:87}
  u'=u'_\Le+u'_\Ca+u'_{\mathrm{J}},\quad u'_\Le=\dot u\,\mathscr{L}^1,\quad
  u'_\co:=u'_\Le+u'_\Ca\,.
\end{equation}
Here,  $u'_\Le$ is the absolutely continuous part with
respect to the Lebesgue measure $\mathscr{L}^1$, whose Lebesgue density
$\dot u$ is the  pointwise (and $\Leb 1$-a.e.\ defined)
derivative of $u$, $u'_\mathrm{J}$ is a discrete measure concentrated on
$ \essJ_u \subset \ju u$, and $u'_\Ca$ is the so-called Cantor part. We will use the notation
$u'_\co:=u'_\Le+u'_\Ca$ for  the diffuse part of the measure, which
does not charge $ \ju u$.

Given  a (non-degenerate) $1$-homogeneous dissipation potential $\psiri$, it induces a notion of (pointwise) total variation for a curve $u\in \BV ([0,T];\R^d)$ via
 \begin{equation}
 \label{varpsi}
    \Var{\psiri}uab:=
    \sup\Big\{\sum_{m=1}^M\psiri\big(u(t_m)-u(t_{m-1})\big):a=t_0< t_1<\cdots<t_{M-1}<t_M=b\Big\}
    \end{equation}
    for any  $[a,b]\subset [0,T]$.
    Therefore, with any $u\in \BV ([0,T];\R^d)$ we can associate the non-decreasing function $V_{\psiri}: \R \to [0,+\infty)$
    defined by
    \[
    V_{\psiri}(t): = \left\{
    \begin{array}{lll}
    0  & \text{if } t \leq 0,
    \\
    \Var{\psiri}{u}0t & \text{if } t \in (0,T),
    \\
    \Var{\psiri}{u}0t & \text{if } t\geq T.
    \end{array}
    \right.
    \]
    Its distributional derivative $\mu_{\psiri}$ is in turn a Radon measure that can be decomposed into a jump part
    $\mu_{\psiri,\mathrm{J}}$, concentrated on $\ju u$ and
    given by
    \[
    \mu_{\psiri,\mathrm{J}} (\{t\}) =
   \psiri (u(t)-u(t_-)) + \psiri (u(t_+)-u(t)),
    \]
     and a diffuse part
     \begin{equation}
     \label{mu-psiri-co}
     \mu_{\psiri,\co} = \mu_{\psiri,\Leb{}} +  \mu_{\psiri,\Ca}  \qquad \text{with}\quad  \mu_{\psiri,\Leb{}}  = \psiri(\dot u) \Leb 1.
     \end{equation}
      There holds
     \begin{equation}
     \label{implicit-def-mu}
      \Var{\psiri}uab = \mu_{\psiri,\co} ([a,b]) + \Jump {\psiri}{u}{a}{b},
     \end{equation}
     with the jump contribution $ \Jump {\psiri}{u}{a}{b}$  given by
     \begin{equation}
     \label{jump-psiri}
     \begin{aligned}
   &  \Jump {\psiri}{u}{a}{b}:   =
     \psiri(u(a_+)-u(a)) + \mu_{\psiri,\mathrm{J}} ((a,b)) +   \psiri(u(b_+)-u(b))
     \\
     &\quad
      = \psiri(u(a_+)-u(a)) + \sum_{t\in \ju u  \cap (a,b)} \Big(    \psiri (u(t)-u(t_-)) + \psiri (u(t_+)-u(t))  \Big) +   \psiri(u(b_+)-u(b)) \,.
      \end{aligned}
     \end{equation}

Finally,
for later use
 we recall that a sequence $(u_n)_n $ \emph{weakly} converges in $ \BV ([0,T];\R^d)$ to a curve $u$ (we will write $u_n \weakto u$)
if $u_n(t) \to u(t)$ as $n\to\infty$ for every $t\in [0,T]$ and
$\sup_n \Var{}{u_n}{0}T \leq C<\infty$ \RRR (in what follows, we shall denote by $\Var{}{u}{0}{T}$ the total variation of a curve $u$ induced by a generic norm on $\R^d$), \EEE whereas
 $(u_n)_n $ \emph{strictly} converges in $ \BV ([0,T];\R^d)$ to $u$ ($u_n\to u$)  if  $u_n \weakto u$ and $\Var{}{u_n}{0}T  \to \Var{}{u}{0}T $.
 %
\paragraph{\bf Viscosity contact potentials.}
The notion we are going to introduce now lies at the core of the
definition of \emph{Balanced Viscosity} solution to a
rate-independent system, driven by an energy functional $\calE$
complying with \eqref{smooth-energy}. Indeed, the concept of
\emph{viscosity contact potential}
 encodes how viscosity enters into the description of the solution behavior at jumps.
It is an extension of the notion of \emph{vanishing-viscosity
contact potential} introduced in \cite{MRS10}, in that  we  are
augmenting the contact  potential defined therein by the time
variable. 
 In referring to this notion, we will drop the word `vanishing'
in order to highlight that Balanced Viscosity solutions do not
necessarily arise from a vanishing-viscosity approximation, cf.\
Sec.\ \ref{s:example}.
\begin{definition}
\label{def:visc-cont-pot}
We call a
 lower semicontinuous  
 function $\bptname: [0,+\infty) \times \R^d \times \R^d \to  \RRR [0,+\infty] \EEE $ \emph{(viscosity) contact potential} if  it satisfies the following properties:
\begin{enumerate}
\item for every $\tau\geq0$ there holds
$\bpt{\tau}{\bv}{\bxi} \geq \langle  \bv, \bxi \rangle $ for all $(\bv,\bxi) \in \R^d \times \R^d$;
\item for every $\bxi \in \R^d$
the map $(\tau,\bv) \mapsto \bpt {\tau}{\bv}{\bxi}$ is convex and positively $1$-homogeneous.
 \item for every $\tau>0 $  and $\bv \in \R^d$,  the map $\xi \mapsto \bpt{\tau}{\bv}{\xi}$ 
is convex. 
\end{enumerate}
\RRR Moreover, we say that $\bptname$ is \emph{non-degenerate}
 if 
 \begin{enumerate}
 \setcounter{enumi}{3}
 \item for every $\tau \geq 0$  there holds  $\bpt{\tau}{\bv}{\bxi} >0$ if $\bv \neq 0$. 
 \end{enumerate} \EEE
Finally, given a (non-degenerate) $1$-homogeneous dissipation potential $\psiri$ as in \eqref{rate-indep-pot},
 we say that
$\bptname$ is \emph{$\psiri$-non degenerate}  if
\begin{enumerate}
 \setcounter{enumi}{4}
 \item  for all $(v,\bxi) \in \R^d \times \R^d$ there holds  $\bpt{0}{\bv}{\bxi} \geq \psiri(\bv)$.
\end{enumerate}
\end{definition}

A crucial object   related to
a  (viscosity) contact potential $\bptname$
 is the set where the inequality in (2) holds as an equality. We will  call  it \emph{contact set} and denote it by 
 \begin{equation}
 \label{def-contact-set}
\Ctc{\bptname} := \left\{ (\tau,\bv,\bxi) \in [0,+\infty) \times \R^d \times \R^d  \, :  \, \bpt{\tau} {\bv}{\bxi}  =  \langle \bv,\bxi \rangle  \right\},
\end{equation}
whereas we will use the notation
 \begin{equation}
 \label{Lambdap0}
 \Ctc{\bptname,0} : = \Ctc{\bptname} \cap  \{0 \} \times  \R^d \times \R^d = \{ (\bv,\bxi) \in \R^d \times \R^d \, : \,   \bpt0 \bv{\bxi}  =  \langle \bv,\bxi \rangle \}.
\end{equation}
Let us point out a first important consequence of the properties defining a contact potential:
\begin{lemma}
\label{l:charact-cont-set}
For  fixed $(\tau,\bxi) \in [0,+\infty) \times \R^d$, denote by
$\partial_v \bptname (\tau,\cdot, \bxi)(\bv)$ the
  (convex analysis) subdifferential at $\bv$ of the functional $\bv \mapsto \bpt \tau {\bv} {\bxi}$.
Then,
\begin{equation}
\label{charact}
(\tau,\bv,\bxi) \in \Ctc{\bptname} \ \Leftrightarrow \  \bxi \in \partial_v \bpt \tau{\cdot}\bxi(\bv)
\end{equation}
\end{lemma}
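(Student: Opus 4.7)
The plan is to prove this as a direct application of the Fenchel--Young equality characterization of subgradients, exploiting the special structure that properties (1) and (2) of Definition~\ref{def:visc-cont-pot} impose on the function $f(\bv) := \bpt{\tau}{\bv}{\bxi}$ for fixed $(\tau,\bxi)$. Note that $f$ is convex by (2), and since $(\tau,\bv)\mapsto \bpt{\tau}{\bv}{\bxi}$ is positively $1$-homogeneous, we have $f(0)=\bpt{\tau}{0}{\bxi}=0$. Moreover, property (1) yields $f(\bv)\ge \la \bv,\bxi\ra$ for every $\bv\in\R^d$, so that $\bxi$ belongs to the subdifferential of $f$ at the origin (equivalently, $f^*(\bxi)\le 0$, hence $f^*(\bxi)=0$ since $f^*\ge 0$ always holds when $f(0)=0$).

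The forward implication ($\Rightarrow$) is immediate: assuming $\bpt{\tau}{\bv}{\bxi}=\la\bv,\bxi\ra$, for any $\bw\in\R^d$ one uses (1) to estimate
\[
f(\bw) \;\ge\; \la \bw,\bxi\ra \;=\; \la\bv,\bxi\ra + \la\bxi,\bw-\bv\ra \;=\; f(\bv) + \la\bxi,\bw-\bv\ra,
\]
which by definition means $\bxi\in\partial f(\bv)$.

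For the converse ($\Leftarrow$), assume $\bxi\in\partial_v\bpt{\tau}{\cdot}{\bxi}(\bv)$, i.e.\ $f(\bw)\ge f(\bv)+\la\bxi,\bw-\bv\ra$ for all $\bw\in\R^d$. Testing with $\bw=0$ and using $f(0)=0$ gives $f(\bv)\le\la\bv,\bxi\ra$; combined with the reverse inequality from (1), this forces $f(\bv)=\la\bv,\bxi\ra$, i.e.\ $(\tau,\bv,\bxi)\in \Ctc{\bptname}$.

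I do not expect any genuine obstacle here: the argument is essentially a restatement of the fact that for a convex function $f$ with $f(0)=0$ and $\bxi$ such that $f\ge\la\cdot,\bxi\ra$ (equivalently $f^*(\bxi)=0$), the Fenchel--Young inequality $f(\bv)+f^*(\bxi)\ge\la\bv,\bxi\ra$ holds with equality exactly on $\partial f(\bv)$. The only subtlety worth highlighting in the write-up is to record clearly how properties (1) and (2) together yield both $f(0)=0$ and $f(\cdot)\ge\la\cdot,\bxi\ra$, since without the vanishing at the origin the test $\bw=0$ in the converse direction would fail. The role of $\tau$ is purely parametric: it is fixed throughout, so that the lemma is a statement about the single-variable convex function $\bv\mapsto \bpt{\tau}{\bv}{\bxi}$.
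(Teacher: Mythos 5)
Your proof is correct and takes essentially the same route as the paper's, which simply invokes the subdifferential characterization \eqref{charact-1homog} of $1$-positively homogeneous convex functions and notes that its first condition is automatic from property~(1) of Definition~\ref{def:visc-cont-pot}; you rederive the same Fenchel--Young identity by hand, which is in fact marginally cleaner since you only need $f(0)=0$ rather than full $1$-homogeneity of $\bv\mapsto f(\bv)$.

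One shared imprecision worth noting: you justify $f(0)=\bpt{\tau}{0}{\bxi}=0$ by the joint $1$-homogeneity of $(\tau,\bv)\mapsto\bpt{\tau}{\bv}{\bxi}$, but joint homogeneity only gives $\bpt{\tau}{0}{\bxi}=\tau\,\bpt{1}{0}{\bxi}$, and $\bpt{1}{0}{\bxi}$ need not vanish --- for contact potentials arising from Theorem~\ref{thm:generation} it equals $\psiri^*(\bxi)$, which is $+\infty$ whenever $\bxi\notin K^*$. In that regime, however, $\bv\mapsto\bpt{\tau}{\bv}{\bxi}\equiv+\infty$ for $\tau>0$, so its subdifferential is empty at every $\bv$ and the contact-set condition is never met; hence both sides of \eqref{charact} are false and the equivalence holds vacuously. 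Your argument (and the paper's, which tacitly treats the partial map $\bv\mapsto\bpt{\tau}{\bv}{\bxi}$ as $1$-homogeneous, something not literally implied by property~(2)) therefore goes through, but it would be more precise to say that property~(2) gives $f(0)=0$ whenever $f\not\equiv+\infty$, and to dispose of the degenerate case separately.
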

\begin{proof}
Since $\bv \mapsto \bpt \tau \bv \bxi$ is convex and positively homogeneous of degree $1$, we have (cf.\ \eqref{charact-1homog}),
\[
\bxi \in \partial_v \bpt \tau{\cdot}{\bxi}(\bv) \quad \text{iff} \quad
\begin{cases}
\langle \bxi, \tilde \bv \rangle \leq \bpt \tau{\tilde {\bv}}\bxi
\quad \text{for all } \tilde{v} \in \R^d,
\\
\langle  \bxi, \bv \rangle = \bpt  \tau{\bv}{\bxi},
\end{cases}
\]
and the thesis follows.
\end{proof}
\begin{remark}
\label{not-a-bipot}
\upshape
Observe that, for fixed  $\tau \in [0,+\infty)$,  the  function
$\bpt{\tau}{\cdot}{\cdot}: \R^d \times \R^d \to [0,+\infty)$ enjoys
    some of the properties of the notion of \emph{bipotential} (cf., e.g., \cite{Buliga-deSaxce-Vallee08}), which is by definition a functional
    $\mathsf{b} : \R^d \times \R^d \to [0,+\infty]$
     convex and lower semicontinuous w.r.t.\  \emph{both} variables, separately, and fulfilling
     $\mathsf{b}(\bv,\bxi) \geq \langle \bv,\bxi \rangle$ for all $(\bv,\bxi) \in \R^d \times \R^d$, as well as
     a stronger version of  \eqref{charact}, namely
     \[
     (\bv,\bxi) \in \Ctc{\mathsf{b}} \  \Leftrightarrow \ \bxi \in \partial_v \mathsf{b}(\cdot,\bxi)(\bv) \  \Leftrightarrow \ \bv \in \partial_\xi \mathsf{b}(\bv,\cdot)(\bxi)\,,
\]
where the contact set $\Ctc{\mathsf{b}} $ is defined similarly as in \eqref{def-contact-set}.
\par
As discussed in \cite{MRS10},
 the conditions defining the notion of bipotential seem to be too restrictive
for the  contact potentials arising in the vanishing-viscosity limit  of viscous systems approximating rate-independent evolution.
Nonetheless, in Sec.\ \ref{ss:3.3} we will see how \emph{viscosity contact potentials} can in fact be generated, via $\Gamma$-convergence, by
bipotentials associated with  families of  dissipation potentials.
\end{remark}

\subsection{$\BV$ solutions to rate-independent systems}
\label{ss:bv}
We are now in a position to recall   the preliminary definitions at the basis of the concept of
\emph{Balanced Viscosity} solution;  notice that all of them  involve the \emph{reduced} contact potential $\bpt 0{\cdot}{\cdot}$ and the energy functional $\calE \in \rmC^1 ([0,T]\times \R^d)$.

First of all, we introduce the (possibly asymmetric) Finsler distance coming into play in the description of the energetic behaviour of a rate-independent  system at a jump time: For a fixed $t\in [0,T]$, the Finsler distance induced by $\bptname$
and $\calE$ at the time $t$ is defined for every $ u_0,\, u_1\in \R^d$ by
\begin{equation}
\label{Finsler-cost}
 \Cost{\bptname}{\calE}{t}{u_0}{u_1}:=
\inf\left\{ \int_{r_0}^{r_1} \bpt{0}{\dot{\theta}(r)}{-\rmD \ene
t{\theta(r)}} \dd r   \, : \ \theta \in \mathrm{AC}
([r_0,r_1];\R^d), \ \theta(r_0) = u_0, \,    \theta(r_1) = u_1
\right\}.
\end{equation}
Observe that,  if $\bptname$  is  a $\psiri$-non degenerate contact potential  for some $1$-positively homogeneous potential $\psiri$, we clearly have
$\Cost{\bptname}{\calE}{t}{u_0}{u_1} \geq \Delta_{\psiri}(u_0,u_1): = \psiri (u_1-u_0)$.
The Finsler distances  from \eqref{Finsler-cost} induce a notion of total variation that measures the dissipation of a $\BV$-curve at its jump points,
mimicking the notion \eqref{varpsi} of $\psiri$-total variation.  Namely, along the footsteps of
\cite[Def.\ 3.4]{MRS10} and in analogy with \eqref{jump-psiri},  for a given curve $u\in \BV ([0,T];\R^d)$ with jump set $\ju u $,  we define the \emph{jump variation} of $u$ induced by
$(\bptname,\calE)$ on an interval $[a,b]\subset [0,T]$ by
\begin{equation}
\label{jump-p-E}
\begin{aligned}
 \Jump {\bptname,\calE}{u}{a}{b} : =
 & \Cost{\bptname}{\calE}{a}{u(a)}{u(a_+)} \\ & + \sum_{t\in \ju u  \cap
(a,b)}  \left(  \Cost{\bptname}{\calE}{t}{u(t_-)}{u(t)} +
\Cost{\bptname}{\calE}{t}{u(t)}{u(t_+)} \right)     +
\Cost{\bptname}{\calE}{b}{u(b_-)}{u(b)} \,.
\end{aligned}
\end{equation}
Finally, given a (non-degenerate) $1$-positively homogeneous  dissipation potential $\psiri$ and  a contact viscosity potential $\bptname$, 
the (pseudo-)total variation of a curve  $u \in \BV([0,T];\R^d)$ induced by $(\psiri, \bptname, \calE)$ is
defined  by (cf.\ \eqref{implicit-def-mu})
\begin{equation}
\label{pseudo-Var}
\Var{\psiri,\bptname,\calE}{u}ab:= \mu_{\psiri,\co} ([a,b]) + \Jump {\bptname,\calE}{u}{a}{b}\quad \text{for any $[a,b]\subset [0,T]$,}
\end{equation}
 with $ \mu_{\psiri,\co} $ from \eqref{mu-psiri-co} the diffuse part of the total variation measure  of the map $t\mapsto \Var{\psiri}{u}{0}t$.
 Let us mention  that the notation $\mathrm{Var}_{\psiri,\bptname,\calE}$ is used here with slight abuse,
since $\mathrm{Var}_{\psiri,\bptname,\calE}$ does not enjoy all of the standard properties of total variation functionals, see \cite[Rmk.\ 3.6]{MRS10} for further details.
Also observe that, if $\bptname$ is $\psiri$-non degenerate, then we have $\Var{\psiri,\bptname,\calE}{u}ab \geq \Var{\psiri}{u}ab$.

We are finally in a  position  to recall the concept of \emph{Balanced Viscosity} solution, cf.\ \cite[Def.\ 4.1]{MRS10} and \cite[Def.\ 3.10]{mielke-rossi-savare2013}.
\begin{definition}[Balanced Viscosity solution]
\label{def:BV}
Given a (non-degenerate) $1$-homogeneous dissipation potential $\psiri$ and a (non-degenerate) viscosity contact potential $\bptname$, we say that a curve
$u\in \BV ([0,T];\R^d)$ is a  \emph{Balanced Viscosity ($\BV$) solution to the rate-independent system $\RIS$} if it fulfills the \emph{local stability} \eqref{eq:65bis} and the
\eqref{eq:84}-\emph{energy balance}
  \begin{equation}
    \label{eq:65bis}
    \tag{$\mathrm{S}_\mathrm{loc}$}
    -\rmD\ene t{u(t)}\in K^*\quad \text{for all }\ t \in [0,T] \setminus \ju u,
  \end{equation}
  \begin{equation}
    \label{eq:84}
    \Var{\psiri,\bptname,\calE}u{0}{t}+\ene{t}{u(t)}=\ene{0}{u(0)}+
    \int_{0}^{t} \partial_t\ene s{u(s)}\,\dd s \quad \text{ for all } t\in [0,T]
    \tag{$\mathrm{E}_{\psiri, \bptname,\calE}$}
  \end{equation}
  with $K^* = \partial\psiri (0)$.
\end{definition}

While referring  to  \cite[Sec.\ 4]{MRS10} and \cite[Sec.\ 3]{mielke-rossi-savare2013} for a detailed survey of the properties of $\BV$ solutions, let us only mention here that this concept yields a thorough description of the energetic behavior of the solution at jumps through
the concept of  \emph{optimal jump  transition}. For fixed $t\in [0,T]$ and $u_-, u_+ \in \R^d$,  we call a curve  $ \theta \in \mathrm{AC} ([0,1];\R^d)$
(up to a rescaling, we may indeed suppose the curves in \eqref{Finsler-cost} to be defined on $[0,1]$),
 with  $ \theta(0) = u_- $ and     $\theta(1) = u_+$, a $(\bptname, \calE_t )$-optimal transition between $u_-$ and $u_+$ if
\begin{equation}
\label{optimal-jump-transition}
\ene t{u_-} - \ene t {u_+} = \Cost{\bptname}{\calE}{t}{u_-}{u_+} =  \bpt{0}{\dot{\theta}(r)}{{-}\mathrm{D}\calE(t,\theta(r))} >0 \qquad \foraa\, r \in (0,1). 
\end{equation}
The following result subsumes
\cite[Prop.\ 4.6, Thm.\ 4.7]{MRS10}.
\begin{proposition}
\label{prop:OJT}
Let $u\in \BV ([0,T];\R^d)$ be a Balanced Viscosity solution to the rate-independent system $\RIS$. Then, at every jump time $t\in \ju u $
there exists  a   $(\bptname,\calE_t)$-optimal transition   $\theta^t$ between
 the left and right-limits $u_-(t)$ and $u_+(t)$, such that
 $\theta^t (r) = u(t)$ for some $r\in [0,1]$.
 Moreover, any optimal jump transition $\theta^t$
between $u_-(t)$ and $u_+(t)$
 complies with  the   contact contact condition
 \begin{equation}
 \label{contact}
 (\dot{\theta}^t(r), -\rmD \ene t{\theta^t(r)}) \in \Ctc{\bptname,0} \qquad \foraa\, r \in (0,1),
 \end{equation}
  with $ \Ctc{\bptname,0} $ from \eqref{Lambdap0}. 
\end{proposition}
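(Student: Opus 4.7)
The plan is to localize the energy balance $(\mathrm{E}_{\psiri,\bptname,\calE})$ at a jump time $t\in\ju u$, use the \emph{one-sided} lower bound $\bpt{0}{v}{\xi}\geq\langle v,\xi\rangle$ (property (1) of a viscosity contact potential) to upgrade the resulting identity into individual energy-Finsler identities for the two half-jumps, and finally read the contact condition \eqref{contact} off from the saturation of that one-sided inequality.

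\textbf{Step 1: Two-sided jump identity.} I would evaluate $(\mathrm{E}_{\psiri,\bptname,\calE})$ at $t-h$ and $t+h$ for $h\downarrow 0$ and subtract. Since the diffuse part $\mu_{\psiri,\co}$ of the $\psiri$-variation measure does not charge the isolated point $\{t\}$, and since $s\mapsto\partial_t\ene s{u(s)}$ is integrable, the only surviving contribution is the jump variation
\[
\Cost{\bptname}{\calE}{t}{u(t_-)}{u(t)}+\Cost{\bptname}{\calE}{t}{u(t)}{u(t_+)}=\calE(t,u(t_-))-\calE(t,u(t_+)).
\]
On the other hand, for any admissible $\theta\in\mathrm{AC}([r_0,r_1];\R^d)$ with $\theta(r_0)=u_0,\theta(r_1)=u_1$, the chain rule for $\calE\in \rmC^1$ combined with property (1) of $\bptname$ gives
\[
\int_{r_0}^{r_1}\bpt{0}{\dot\theta(r)}{-\rmD\ene t{\theta(r)}}\,dr\geq\int_{r_0}^{r_1}\langle\dot\theta(r),-\rmD\ene t{\theta(r)}\rangle\,dr=\calE(t,u_0)-\calE(t,u_1),
\]
so $\Cost{\bptname}{\calE}{t}{u_0}{u_1}\geq\calE(t,u_0)-\calE(t,u_1)$. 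Applying this to the two half-jumps and comparing with the displayed sum forces both inequalities to be equalities.

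\textbf{Step 2: Existence of an optimal transition through $u(t)$.} I would extract minimizers for the two half-Finsler costs via the direct method: take minimizing sequences $(\theta_n)_n$, reparameterize them to unit speed in the intrinsic length induced by $\bpt{0}{\cdot}{-\rmD\ene t\cdot}$ to obtain uniform Lipschitz bounds on a fixed interval, apply Ascoli--Arzel\`a, and pass to the limit using the lower semicontinuity of $\bptname$ together with the continuity of $\rmD\calE(t,\cdot)$. Concatenating the two minimizers, on $[0,\tfrac12]$ and $[\tfrac12,1]$ respectively, yields $\theta^t$ with $\theta^t(\tfrac12)=u(t)$ whose total cost equals $\calE(t,u(t_-))-\calE(t,u(t_+))$. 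The triangle-type inequality
\[
\Cost{\bptname}{\calE}{t}{u(t_-)}{u(t_+)}\leq\Cost{\bptname}{\calE}{t}{u(t_-)}{u(t)}+\Cost{\bptname}{\calE}{t}{u(t)}{u(t_+)}
\]
together with the Step 1 lower bound forces equality and hence optimality of $\theta^t$ between $u(t_-)$ and $u(t_+)$, with the required strict positivity in \eqref{optimal-jump-transition}.

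\textbf{Step 3: Contact condition.} For \emph{any} optimal transition $\theta^t$, the defining equality
\[
\int_0^1\bpt{0}{\dot\theta^t(r)}{-\rmD\ene t{\theta^t(r)}}\,dr=\calE(t,u_-(t))-\calE(t,u_+(t))=\int_0^1\langle\dot\theta^t(r),-\rmD\ene t{\theta^t(r)}\rangle\,dr
\]
(the second identity being the chain rule) rewrites as
\[
\int_0^1\Big(\bpt{0}{\dot\theta^t(r)}{-\rmD\ene t{\theta^t(r)}}-\langle\dot\theta^t(r),-\rmD\ene t{\theta^t(r)}\rangle\Big)\,dr=0.
\]
The integrand is nonnegative by property (1) of $\bptname$, hence vanishes for a.e. $r\in(0,1)$, which is exactly $(\dot\theta^t(r),-\rmD\ene t{\theta^t(r)})\in\Ctc{\bptname,0}$.

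\textbf{Main obstacle.} The only delicate point is the compactness argument in Step 2. Since $\bptname$ is not assumed to be $\psiri$-nondegenerate in this proposition, the coercivity of $\bpt{0}{\cdot}{\xi}$ in $v$ is not a priori uniform, and controlling $\|\dot\theta_n\|$ along a minimizing sequence requires using the $1$-homogeneity of $\bpt{0}{\cdot}{\xi}$ to reparameterize by intrinsic arclength and confining the image curves to a compact region on which $\rmD\calE(t,\cdot)$ stays bounded. This is the step that carries the technical weight; once compactness and lower semicontinuity are in place, Steps 1 and 3 are short manipulations.
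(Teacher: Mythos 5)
Your proposal is correct and follows precisely the route of the results it is meant to recover: the paper does not in fact reproduce a proof here, but merely remarks that the proposition ``subsumes'' \cite[Prop.~4.6, Thm.~4.7]{MRS10}, and the three-step scheme you lay out (localization of $(\mathrm{E}_{\psiri,\bptname,\calE})$ around $t\in\ju u$; direct method after reparametrization to constant intrinsic speed plus concatenation through $u(t)$; equality in $\bpt{0}{v}{\xi}\geq\langle v,\xi\rangle$ under the chain rule) is the same scheme as in that reference. You are also right that the compactness step is the only point that carries real weight; one small clarification is that non-degeneracy of $\bptname$ (property (4) of Definition~\ref{def:visc-cont-pot}, which \emph{is} built into Definition~\ref{def:BV}), together with lower semicontinuity and $1$-homogeneity of $\bpt{0}{\cdot}{\xi}$, already yields a \emph{uniform} coercivity constant $\inf\{\bpt{0}{v}{\xi}:\|v\|=1,\ \xi\in K\}>0$ on any compact set $K$ of duals; what is genuinely missing without $\psiri$-non-degeneracy is an \emph{a priori} bound confining the image of a minimizing sequence, and indeed in \cite{MRS10} the contact potential is $\psiri$-non-degenerate by construction, which closes that loop. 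Steps~1 and~3 are watertight as written.
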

A crucial consequence of
\eqref{contact} and of \eqref{charact} from Lemma \ref{l:charact-cont-set} is that any optimal jump transition $\theta^t$ complies with the subdifferential inclusion
\begin{equation}
\label{subdiff-inclusion}
-\rmD \ene t{\theta^t(r)} \in \partial_v \bpt 0{\cdot}{-\rmD \ene t{\theta^t(r)}}(\dot{\theta}^t(r)) \qquad \foraa\, r \in (0,1).
\end{equation}
This explicitly  highlights how the  contact potential $\bptname$   enters into the   description of the solution behavior at jumps.

With the last result of this section we reformulate the $\BV$ solution concept in terms of the null-minimization of a functional defined on $\BV$-trajectories; this will be crucial
for the variational convergence analysis developed in Sec.\ \ref{s:main}. Namely, given a  rate-independent system $\RIS$, we
define  the functional $\Jfuname{\psiri,\bptname,\calE}:   \BV ([0,T];\R^d) \to (-\infty,+\infty]$
 by
\begin{equation}
\label{J-fun_RIS}
\begin{aligned}
\Jfu{\psiri,\bptname,\calE}u:  & =
\Var{\psiri,\bptname,\calE}{u}{0}T +  \int_0^T  \psiri^* (-\rmD \ene t{u(t)}) \dd t   +\ene{T}{u(T)} - \ene{0}{u(0)} -     \int_{0}^{T} \partial_t\ene s{u(s)}\,\dd s
\\
&
\begin{aligned}
 =
 \int_0^T
 \psiri(\dot{u}(s)) &  + \psiri^* (-\rmD \ene s{u(s)}) \dd s    + \mu_{\psiri,\Ca}([0,T])   + \Jump{\bptname,\calE}{u}{0}T\\ &
+\ene{T}{u(T)} - \ene{0}{u(0)} -     \int_{0}^{T} \partial_t\ene s{u(s)}\,\dd s.
\end{aligned}
\end{aligned}
\end{equation}
We then have the following
\begin{proposition}
\label{prop:null-minim}
A curve $u\in \BV ([0,T];\R^d)$ is a   Balanced Viscosity solution to the rate-independent system $\RIS$ if and only if
\begin{equation}
\label{null-minimizer}
0= \Jfu{\psiri,\bptname,\calE}u \leq  \Jfu{\psiri,\bptname,\calE}v \qquad \text{for all } v \in  \BV ([0,T];\R^d)\
\end{equation}
\end{proposition}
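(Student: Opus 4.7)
The strategy is to identify $\Jfuname{\psiri,\bptname,\calE}$ as the non-negative gap in a Young-Fenchel type chain-rule inequality on $\BV([0,T];\R^d)$, whose vanishing is equivalent to the conjunction of the local stability \eqref{eq:65bis} and the energy balance \eqref{eq:84}. The direction $(\Rightarrow)$ is then immediate: if $u$ is a Balanced Viscosity solution, then \eqref{eq:65bis} combined with $\psiri^* = I_{K^*}$ yields $\psiri^*(-\rmD\ene s{u(s)}) = 0$ for a.e.\ $s$, and evaluating \eqref{eq:84} at $t=T$ rewrites precisely as $\Jfu{\psiri,\bptname,\calE}u = 0$.

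For the core inequality $\Jfu{\psiri,\bptname,\calE}v \geq 0$, valid for every $v\in \BV([0,T];\R^d)$, I would combine three ingredients. First, the Young-Fenchel inequality $\psiri(\dot v(s)) + \psiri^*(-\rmD\ene s{v(s)}) \geq -\langle \rmD\ene s{v(s)}, \dot v(s)\rangle$ on the absolutely continuous part, together with its measure-theoretic analogue $\mu_{\psiri,\Ca} \geq -\rmD\ene{\cdot}{v(\cdot)}\cdot v'_\Ca$ as signed measures on the Cantor set, obtained via the polar decomposition of $v'_\Ca$ and the identification of $\psiri$ as the support function of $K^*$. Second, the cost lower bound $\Cost{\bptname}{\calE}{t}{u_0}{u_1} \geq \ene t{u_0} - \ene t{u_1}$, a direct consequence of the defining inequality $\bpt 0 v \xi \geq \langle v,\xi\rangle$ from Definition~\ref{def:visc-cont-pot} applied along any admissible path connecting $u_0$ to $u_1$ via the fundamental theorem of calculus; after summation over $\ju v$ (with boundary conventions $v(0_-)=v(0)$, $v(T_+)=v(T)$) this gives $\Jump{\bptname,\calE}{v}{0}T \geq \sum_{t\in \ju v}[\ene t{v(t_-)} - \ene t{v(t_+)}]$. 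Third, the Vol'pert chain rule for $\calE \in \rmC^1([0,T]\times\R^d)$ composed with the BV curve $v$, which gives $\ene T{v(T)} - \ene 0{v(0)} - \int_0^T \partial_t\ene s{v(s)}\dd s = \int_{[0,T]}\langle \rmD\ene s{v(s)}, d v'_\co\rangle + \sum_{t\in \ju v}[\ene t{v(t_+)} - \ene t{v(t_-)}]$. Adding the first two ingredients and using the third to telescope the gradient-integral and jump terms yields the claimed inequality.

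For the converse implication in the null-minimizer statement, the same proof applied on any subinterval $[a,b]\subset [0,T]$ delivers a non-negative localized functional $\mathscr{J}_{\psiri,\bptname,\calE}(u;[a,b])$ which is additive under concatenation of intervals. Hence $\Jfu{\psiri,\bptname,\calE}u = 0$ forces $\mathscr{J}_{\psiri,\bptname,\calE}(u;[0,t]) = 0$ for every $t\in[0,T]$, which reads precisely as \eqref{eq:84}. Moreover the vanishing of the absolutely continuous integrand $\psiri^*(-\rmD\ene s{u(s)})$ forces $-\rmD\ene s{u(s)} \in K^*$ for a.e.\ $s$, which on $[0,T]\setminus \ju u$ upgrades to \eqref{eq:65bis} by continuity of $s\mapsto -\rmD\ene s{u(s)}$ off the jump set.

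The principal obstacle is the Cantor-part estimate in the first ingredient, where no pointwise Young inequality is available. The $\{0,+\infty\}$-valued nature of $\psiri^* = I_{K^*}$ is precisely what makes this tractable: where the integrand is infinite the whole functional equals $+\infty$ and the inequality is vacuous, while where it vanishes one has $-\rmD\ene s{v(s)}\in K^*$ and the support-function identity $\psiri(w) = \sup_{\xi\in K^*}\langle \xi,w\rangle$ suffices to compare $\mu_{\psiri,\Ca}$ with $-\rmD\calE\cdot v'_\Ca$ on the Cantor set.
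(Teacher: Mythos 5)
Your overall route matches the paper's: the paper reduces the statement to the non-negativity of $\Jfuname{\psiri,\bptname,\calE}$ along arbitrary $\BV$ curves, which it delegates to the chain-rule argument of \cite[Cor.~3.4]{mielke-rossi-savare2013}, plus the continuity upgrade from \eqref{eq:65ter} to \eqref{eq:65bis}. You unpack the delegated step: Young--Fenchel on the Lebesgue part, the cost lower bound $\Cost{\bptname}{\calE}{t}{u_0}{u_1}\geq\ene t{u_0}-\ene t{u_1}$ from $\bpt 0v\xi\geq\langle v,\xi\rangle$, the Vol'pert chain rule, and additivity over subintervals for the converse. Those ingredients are exactly right.

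The one place that would not survive scrutiny as written is the Cantor-part dichotomy. You argue that ``where the integrand is infinite the whole functional equals $+\infty$, while where it vanishes one has $-\rmD\ene s{v(s)}\in K^*$ and the support-function identity gives the comparison with $-\rmD\calE\cdot v'_\Ca$.'' This reasons pointwise and conflates Lebesgue-almost-everywhere information with $\mu_{\psiri,\Ca}$-almost-everywhere information; since $\mu_{\psiri,\Ca}$ is singular with respect to $\Leb 1$, knowing $\psiri^*(-\rmD\ene s{v(s)})=0$ for $\Leb 1$-a.e.\ $s$ says nothing a priori on a set that supports the Cantor measure. The missing bridge is the same continuity argument you already invoke for the $(\mathrm{S}'_{\mathrm{loc}})\Rightarrow(\mathrm{S}_{\mathrm{loc}})$ upgrade: $s\mapsto-\rmD\ene s{v(s)}$ is continuous on $[0,T]\setminus\ju v$ and $K^*$ is closed, so if this map left $K^*$ at a single $s_0\in[0,T]\setminus\ju v$ it would do so on a whole neighbourhood, forcing $\int_0^T\psiri^*(-\rmD\ene s{v(s)})\dd s=+\infty$; hence whenever $\Jfu{\psiri,\bptname,\calE}v<+\infty$ one has $-\rmD\ene s{v(s)}\in K^*$ for \emph{every} $s\in[0,T]\setminus\ju v$, and since $\mu_{\psiri,\Ca}$ is diffuse (it charges no countable set, in particular not $\ju v$) the support-function comparison on the Cantor set is legitimate. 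A related ordering slip appears in the converse direction: $\Jfuname{\psiri,\bptname,\calE}(u;[0,t])=0$ reads as \eqref{eq:84} only \emph{after} one has noted that $\int_0^T\psiri^*(-\rmD\ene s{u(s)})\dd s=0$, which follows first from $\Jfu{\psiri,\bptname,\calE}u=0<+\infty$ and the $\{0,+\infty\}$-valued nature of $\psiri^*$; you state the vanishing of this integrand only afterwards.
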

\begin{proof}
First of all, observe that
 conditions \eqref{eq:65bis}--\eqref{eq:84} are indeed equivalent to \eqref{eq:65ter}--\eqref{eq:84}, with
\begin{equation}
    \label{eq:65ter}
    \tag{$\mathrm{S}_\mathrm{loc}'$}
    -\rmD\ene t{u(t)}\in K^*\quad \text{for } \Leb1-\text{a.a. } t \in (0,T).
  \end{equation}
  Indeed,
  if \eqref{eq:65ter} holds, with a
   continuity argument one deduces $ -\rmD\ene t{u(t)}\in K^*$ at all $t\in [0,T]\setminus \ju u$.

Clearly,  \eqref{eq:65ter}--\eqref{eq:84} are then equivalent to
  \begin{equation}
\label{crucial-rif}
  \Jfu{\psiri,\bptname,\calE}u =0. 
  \end{equation}
  Now, with an argument based on the chain rule for $\calE$, one sees (cf.\   the proof of  \cite[Cor.\ 3.4]{mielke-rossi-savare2013}) that  along a given  curve  \RRR $v\in \BV ([0,T];\R^d)$
  the map
  $ \Jfu{\psiri,\bptname,\calE}v \geq 0$, so that
  \eqref{crucial-rif}  holds if and only if $  \Jfu{\psiri,\bptname,\calE}u \leq 0$,  i.e.\ 
  $u\in \mathrm{Argmin}_{v\in   \BV ([0,T];\R^d) } \Jfu{\psiri,\bptname,\calE}v$.  \EEE
   This concludes the proof.
  \end{proof}
\section{Generation of viscosity contact potentials via $\Gamma$-convergence}
\label{ss:3.3}
In this section we show a possible way to generate a viscosity contact potential via a
 $\Gamma$-convergence procedure, starting from a family $(\Psi_n)_n$
of dissipation potentials  \RRR  with \emph{superlinear growth at infinity} (cf.\ \eqref{superlinear-infty}).  \EEE

Preliminarily, given a convex dissipation potential $\Psi$, we define the  \emph{bipotential}
$ \bipname{\Psi} : [0,+\infty) \times \R^d \times \R^d \to [0,+\infty]$
induced by $\Psi$ via
\begin{equation}
\label{def-bptreg}
\bip{\Psi}{\tau}{v}{\xi}:=
 \left\{
 \begin{array}{ll}
 \tau \Psi \left( \frac v\tau \right) + \tau \Psi^*(\xi)  &  \text{for } \tau>0,
 \\
 0   & \text{for } \tau=0, \ v=0,
 \\
 +\infty  & \text{for } \tau =0 \text{ and } v \neq 0,
 \end{array}
 \right.
  =
 \left\{
 \begin{array}{ll}
 \tau \Psi \left( \frac v\tau \right) + \tau \Psi^*(\xi)  &  \text{for } \tau>0,
 \\
I_{\{ 0\}}(v)   & \text{for } \tau=0.
 \end{array}
 \right.
 \end{equation}
 It is immediate to check that
 \begin{enumerate}
 \item for every $(v,\xi)\in \R^d \times \R^d$ the map $\tau\mapsto \bip{\Psi}{\tau}{v}{\xi}$ is convex;
 \item for every $\tau \geq 0$  the functional $(v,\xi)\mapsto \bip{\Psi}{\tau}{v}{\xi}$ is a bipotential in the sense of \cite{Buliga-deSaxce-Vallee08} (cf.\ Remark \ref{not-a-bipot});
 \item \RRR for every $v \neq 0$ and $\xi \in \R^d$ with $\Psi^*(\xi) \neq 0$, the set $\mathrm{Argmin}_{\tau>0} \bip{\Psi}{\tau}{v}{\xi}$ is non-empty, 
 \end{enumerate}
\RRR where the latter property is due to the fact that
  $\lim_{\tau\downarrow 0}  \bip{\Psi}{\tau}{v}{\xi} =+\infty $ due to the superlinear growth of $\Psi$, and 
  $\lim_{\tau\uparrow+\infty}  \bip{\Psi}{\tau}{v}{\xi} =+\infty $. \EEE

 Let us now be given a sequence $(\Psi_n)_n$ of dissipation potentials, and let $(\bipname{\Psi_n})_n$ be the associated bipotentials.
 We assume the following.
 \begin{hypothesis}
\label{hyp:p-lim} \upshape Let $  \bptname : [0,+\infty) \times \R^d
\times \R^d \to [0,+\infty]$   be  defined by
\begin{equation}
\label{def-p-gliminf}
\begin{aligned}
&
 \bptname= \gl \liminf_{n} \bipname{\Psi_n} \quad \text{ i.e. } \quad
\bpt \tau v \xi : = \inf\{ \liminf_{n \to \infty} \bip{\Psi_n}{\tau_n}{v_n}{\xi_n} \, : \ \tau_n \to \tau, \quad v_n \to v \quad \xi_n \to \xi \}.
\end{aligned}
\end{equation}
Then,
\begin{equation}
\label{hyp-p-limsup}
\begin{aligned}
 & \text{for every } \xi \in\R^d  \text{ there exists $(\xi_n)_n \subset \R^d$ with }   \xi_n \to\xi \text{  and  }
 \bpt{\cdot}{\cdot}{\xi}  = \gl \limsup_{n \to \infty} \bip{\Psi_n} {\cdot}{\cdot}{\xi_n} \qquad \text{i.e. }
\\
&
\bptnew \tau v {\xi} =   \inf_{(\xi_n)_n\subset \R^d,\, \xi_n\to \xi}  \left\{ \limsup_{n \to \infty} \bip {\Psi_n}{\tau_n}{v_n}{\xi_n} \, : \ \tau_n \to \tau, \quad v_n \to v \right\}.
\end{aligned}
\end{equation}
\end{hypothesis}
In Section \ref{s:example} ahead, we will exhibit two classes of dissipations potentials $(\Psi_n)_n$, with superlinear growth at infinity,  and associated functionals $\bptname$, complying with Hypothesis \ref{hyp:p-lim}. 
\par
Observe that with \eqref{hyp-p-limsup}
 we are imposing a stronger condition than $\bptname = \gl \limsup_{n\to\infty} \bipname{\Psi_n}$,
 namely we are asking that 
 \begin{equation}
 \label{gamma-limsup-cond}
 \forall\, \xi \in \R^d \ \ \exists\, (\xi_n)_n \subset \R^d\, :  
 \ \ \forall\, (\tau,v) \in   [0,+\infty) \times \R^d \ \exists\, (\tau_n,v_n)_n\, \text{ s.t. } 
 \begin{cases}
 \tau_n\to\tau, \\ v_n \to v, \\  \limsup_{n \to \infty} \bip {\Psi_n}{\tau_n}{v_n}{\xi_n} \leq \bptnew \tau v {\xi}\,.
 \end{cases}
 \end{equation}
  This property will play a \RRR key \EEE role in the proof of Lemma \ref{l:3} below.

The main result of this section ensures that the functional $\bptname$ generated  via \eqref{def-p-gliminf}--\eqref{hyp-p-limsup} is a contact potential in the sense of Definition \ref{def:visc-cont-pot}.
\begin{theorem}
\label{thm:generation}
Let $(\Psi_n)_n$ be a sequence of dissipation potentials on $\R^d$ complying with Hypothesis \ref{hyp:p-lim}. Then, $\bptname$ is a  viscosity contact potential  according to Def.\  \ref{def:visc-cont-pot}, 
and there exists a $1$-homogeneous dissipation potential $\psiri$ such that
\begin{equation}
\label{psiri-non-deg}
\bpt \tau v\xi \geq \psiri(v) \qquad  \text{for all } (\tau,v,\xi) \in [0,+\infty) \times \R^d \times \R^d.
\end{equation}
\par
\RRR Moreover, if the dissipation potentials $(\Psi_n)_n$ fulfill
\begin{equation}
\label{4-non-degeneracy}
\exists\, M>0, \ (M_n)_n \subset (0,+\infty) \text{ s.t. } M_n \to 0 
 \ \text{and} \  \forall\, n \in \N \ \ \forall\, v \in \R^d \text{ there holds } \qquad \Psi_n(v) \geq M \| v \|-M_n,
\end{equation}
then $\psiri$ is non-degenerate, and thus $\bptname$ is $\psiri$-non degenerate. \EEE
\end{theorem}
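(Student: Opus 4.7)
\medskip

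\textbf{Proof sketch.} The strategy is to verify, one by one, the properties (1)--(4) of Definition~\ref{def:visc-cont-pot} for the functional $\bptname$ defined in \eqref{def-p-gliminf}, and then to exhibit a suitable $\psiri$ together with the pointwise bound $\bpt\tau v \xi\ge\psiri(v)$. Lower semicontinuity of $\bptname$ is immediate from its very definition as a $\Gamma$-$\liminf$. The Fenchel--Young bound (1), i.e.\ $\bpt\tau v\xi\ge\la v,\xi\ra$, is satisfied by each $\bipname{\Psi_n}$: for $\tau>0$ one has $\tau\Psi_n(v/\tau)+\tau\Psi_n^*(\xi)\ge\tau\la v/\tau,\xi\ra=\la v,\xi\ra$; for $\tau=0$ the bipotential is $+\infty$ when $v\ne 0$ and $0=\la 0,\xi\ra$ otherwise. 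Hence the bound passes to the $\Gamma$-$\liminf$. For the $1$-homogeneity part of (2), observe that if $(\tau_n,v_n,\xi_n)\to(\tau,v,\xi)$ with $\bipname{\Psi_n}(\tau_n,v_n,\xi_n)\to\bpt\tau v\xi$, then $(\lambda\tau_n,\lambda v_n,\xi_n)\to(\lambda\tau,\lambda v,\xi)$ and $\bipname{\Psi_n}(\lambda\tau_n,\lambda v_n,\xi_n)=\lambda\bipname{\Psi_n}(\tau_n,v_n,\xi_n)$ by the $1$-homogeneity of each perspective $\tau\Psi_n(v/\tau)$ and the linearity of $\tau\mapsto\tau\Psi_n^*(\xi)$; this yields $\bpt{\lambda\tau}{\lambda v}\xi\le\lambda\bpt\tau v\xi$, and the reverse inequality is symmetric.

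For the convexity of $(\tau,v)\mapsto\bpt\tau v\xi$ in (2), I fix $\xi$ and exploit \emph{the same} recovery sequence $\xi_n\to\xi$ supplied by Hypothesis~\ref{hyp:p-lim} for both $(\tau,v)$ and $(\tau',v')$: one obtains $(\tau_n,v_n)\to(\tau,v)$ and $(\tau'_n,v'_n)\to(\tau',v')$ with $\bipname{\Psi_n}(\tau_n,v_n,\xi_n)\to\bpt\tau v\xi$ and $\bipname{\Psi_n}(\tau'_n,v'_n,\xi_n)\to\bpt{\tau'}{v'}\xi$. Since for each $\xi$ the map $(\tau,v)\mapsto\bipname{\Psi_n}(\tau,v,\xi)$ is convex (sum of the perspective and a linear term), the convex combination $(\lambda\tau_n+(1-\lambda)\tau'_n,\lambda v_n+(1-\lambda)v'_n)\to(\lambda\tau+(1-\lambda)\tau',\lambda v+(1-\lambda)v')$ yields
\[
\bipname{\Psi_n}(\lambda\tau_n+(1-\lambda)\tau'_n,\lambda v_n+(1-\lambda)v'_n,\xi_n)\le \lambda\bipname{\Psi_n}(\tau_n,v_n,\xi_n)+(1-\lambda)\bipname{\Psi_n}(\tau'_n,v'_n,\xi_n),
\]
and passing to the $\liminf$ in the $\Gamma$-$\liminf$ inequality for $\bptname$ at the convex combination gives the desired bound. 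The hard part is property (3), convexity in $\xi$ for $\tau>0$: here the failure of \emph{joint} convexity of $\bipname{\Psi_n}$ in $(\tau,v,\xi)$ forces a more delicate argument. Fix $\tau>0,\,v$, take $\xi,\xi'$ and $\lambda\in[0,1]$, and assume (otherwise trivial) $\bpt\tau v\xi,\bpt\tau v{\xi'}<\infty$. Apply Hypothesis~\ref{hyp:p-lim} separately to $\xi$ and $\xi'$ to obtain $\xi_n\to\xi$, $\xi'_n\to\xi'$ and associated $(\tau_n,v_n),(\tau'_n,v'_n)\to(\tau,v)$ with $\bipname{\Psi_n}(\tau_n,v_n,\xi_n)\to\bpt\tau v\xi$ and $\bipname{\Psi_n}(\tau'_n,v'_n,\xi'_n)\to\bpt\tau v{\xi'}$. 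Setting $\sigma_n:=\lambda\tau_n+(1-\lambda)\tau'_n$, $w_n:=\lambda v_n+(1-\lambda)v'_n$, $\eta_n:=\lambda\xi_n+(1-\lambda)\xi'_n$, convexity of the perspective in $(\tau,v)$ gives $\sigma_n\Psi_n(w_n/\sigma_n)\le\lambda\tau_n\Psi_n(v_n/\tau_n)+(1-\lambda)\tau'_n\Psi_n(v'_n/\tau'_n)$, while convexity of $\Psi_n^*$ yields
\[
\sigma_n\Psi_n^*(\eta_n)\le \lambda\tau_n\Psi_n^*(\xi_n)+(1-\lambda)\tau'_n\Psi_n^*(\xi'_n)+\lambda(1-\lambda)(\tau_n-\tau'_n)\bigl(\Psi_n^*(\xi_n)-\Psi_n^*(\xi'_n)\bigr).
\]
The crucial point is that since $\tau>0$ and $\tau_n\Psi_n^*(\xi_n)\le\bipname{\Psi_n}(\tau_n,v_n,\xi_n)$ is bounded, $\Psi_n^*(\xi_n)$ is bounded, and likewise for $\Psi_n^*(\xi'_n)$; combined with $\tau_n-\tau'_n\to 0$, the cross term is $o(1)$. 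Hence $\bipname{\Psi_n}(\sigma_n,w_n,\eta_n)\le\lambda\bipname{\Psi_n}(\tau_n,v_n,\xi_n)+(1-\lambda)\bipname{\Psi_n}(\tau'_n,v'_n,\xi'_n)+o(1)$, and the $\Gamma$-$\liminf$ inequality delivers convexity.

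It remains to construct $\psiri$ and establish non-degeneracy. I take $\psiri(v):=\bpt 0 v 0$; properties (2) applied at $\tau=0,\xi=0$ and the value $\bipname{\Psi_n}(0,0,0)=0$ show at once that $\psiri$ is convex, positively $1$-homogeneous, and vanishes at the origin, hence a $1$-homogeneous dissipation potential (when finite; otherwise the trivial choice $\psiri\equiv 0$ already satisfies the asserted bound). The inequality $\bpt\tau v \xi\ge\psiri(v)$ can be established under the non-degeneracy hypothesis \eqref{4-non-degeneracy}, which is also the only place where this bound acquires content: from $\Psi_n(w)\ge M\|w\|-M_n$ one obtains
\[
\bipname{\Psi_n}(\tau,v,\xi)\ge \tau\Psi_n(v/\tau)\ge M\|v\|-\tau M_n \qquad\forall\,(\tau,v,\xi),
\]
and since $M_n\to 0$, the $\Gamma$-$\liminf$ yields $\bpt\tau v\xi\ge M\|v\|$ for every $(\tau,v,\xi)$. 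In particular $\psiri(v)=\bpt 0 v 0\ge M\|v\|>0$ for $v\ne 0$, so $\psiri$ is non-degenerate and, \emph{a fortiori}, $\bptname$ is $\psiri$-non degenerate in the sense of Def.~\ref{def:visc-cont-pot}(5). The main obstacle in the whole proof is the convexity in $\xi$ at $\tau>0$, because $\bipname{\Psi_n}$ is not jointly convex in $(\tau,v,\xi)$; the argument above only works thanks to the strict positivity of $\tau$, which converts the boundedness of $\tau_n\Psi_n^*(\xi_n)$ into boundedness of $\Psi_n^*(\xi_n)$ itself and thereby kills the cross terms.
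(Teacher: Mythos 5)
Your verification of Definition~\ref{def:visc-cont-pot}(1)--(2) is correct and matches Lemma~\ref{l:3}. Your argument for property (3) is a genuinely different route from the paper's: instead of deriving convexity in $\xi$ from the structural identity $\bpt\tau v\xi=\tau\psiri(v/\tau)+\tau\psiri^*(\xi)$ for $\tau>0$ (Lemma~\ref{l:repr-psiri}), you work directly with the approximating bipotentials and show the cross term $\lambda(1-\lambda)(\tau'_n-\tau_n)\bigl(\Psi_n^*(\xi_n)-\Psi_n^*(\xi'_n)\bigr)$ is $o(1)$ because, for $\tau>0$, boundedness of $\tau_n\Psi_n^*(\xi_n)$ forces boundedness of $\Psi_n^*(\xi_n)$. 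This works (aside from a harmless sign error in the cross-term identity), but skipping Lemma~\ref{l:repr-psiri} costs you the very tool the paper uses for the remaining claim, and that is where your argument breaks down.

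The gap is the proof of \eqref{psiri-non-deg}. With the choice $\psiri(v)=\bpt 0v0$, you never prove $\bpt\tau v\xi\ge\psiri(v)$ for all $(\tau,v,\xi)$. Under \eqref{4-non-degeneracy} you obtain $\bpt\tau v\xi\ge M\|v\|$, but this is strictly weaker: take $\Psi_n(v)=A\|v\|+\tfrac1n\|v\|^2$ with $A>M$; then \eqref{4-non-degeneracy} holds with $M_n\equiv 0$, yet $\psiri(v)=A\|v\|>M\|v\|$, so $\bpt 0v\xi\ge M\|v\|$ says nothing about whether $\bpt 0v\xi\ge\psiri(v)$. Your closing ``a fortiori'' --- that non-degeneracy of $\psiri$ implies $\psiri$-non-degeneracy of $\bptname$ --- is incorrect: Definition~\ref{def:visc-cont-pot}(5) is the pointwise inequality $\bpt 0v\xi\ge\psiri(v)$ for every $\xi$, a condition independent of whether $\psiri(v)>0$. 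Moreover, the fallback $\psiri\equiv 0$ without \eqref{4-non-degeneracy} is not available, since the theorem asserts both conclusions for one and the same $\psiri$ and the paper proves \eqref{psiri-non-deg} unconditionally. The missing ingredients are exactly the structural results you bypassed: for $\tau>0$ the identity $\bpt\tau v\xi=\tau\psiri(v/\tau)+\tau\psiri^*(\xi)=\psiri(v)+\tau\psiri^*(\xi)\ge\psiri(v)$ (Lemma~\ref{l:repr-psiri}); for $\tau=0$ with $\psiri^*(\xi)<+\infty$ the equality $\bpt 0v\xi=\psiri(v)$ (Lemma~\ref{l:zero-1}); and for $\tau=0$ with $\psiri^*(\xi)=+\infty$ the pseudo-monotonicity $\bigl(\bpt 1v\xi-\bpt 1v0\bigr)\bigl(\bpt 0v\xi-\bpt 0v0\bigr)\ge 0$ of Lemma~\ref{l:5}, which yields $\bpt 0v\xi\ge\bpt 0v0=\psiri(v)$ since $\bpt 1v\xi=+\infty>\psiri(v)=\bpt 1v0$.
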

We postpone the proof  of Theorem \ref{thm:generation} to the end of this section, after obtaining a series of preliminary lemmas on the structure that $\bptname$ 
defined by Hypothesis \ref{hyp:p-lim} 
inherits from the potentials $\Psi_n$.
\begin{lemma}
\label{l:3} Assume Hypothesis \ref{hyp:p-lim}. Then,  for every
$(\tau,v,\xi) \in [0,+\infty) \times \R^d \times \R^d$ there holds
\begin{enumerate}
\item $\bptnew \tau v\xi  \geq \langle v,\xi \rangle $;
\item the map $(\tau,v) \mapsto \bpt \tau v\xi$ is convex and positively homogeneous of degree $1$.\EEE
\end{enumerate}
\end{lemma}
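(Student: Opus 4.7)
The plan is to deduce both properties of $\bptname$ from the corresponding properties of the pre-limit bipotentials $\bipname{\Psi_n}$ by passing to the $\Gamma$-$\liminf$, using the special form of Hypothesis~\ref{hyp:p-lim} for the upper-bound half.

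For part~(1), I would start from the classical Fenchel inequality applied to $\Psi_n$: for every $\tau_n>0$, $v_n,\xi_n\in\R^d$,
\[
\Psi_n(v_n/\tau_n)+\Psi_n^*(\xi_n)\ge \langle v_n/\tau_n,\xi_n\rangle,
\]
so, multiplying by $\tau_n$, $\bip{\Psi_n}{\tau_n}{v_n}{\xi_n}\ge \langle v_n,\xi_n\rangle$. The estimate is trivial when $\tau_n=0$ by the definition~\eqref{def-bptreg}. Then, taking any sequences with $\tau_n\to\tau$, $v_n\to v$, $\xi_n\to\xi$ and passing to the $\liminf$, one obtains $\liminf_n \bip{\Psi_n}{\tau_n}{v_n}{\xi_n}\ge \langle v,\xi\rangle$; minimizing over such sequences gives $\bpt \tau v\xi\ge \langle v,\xi\rangle$.

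For the $1$-homogeneity in $(\tau,v)$ in part~(2), the key observation is that the perspective structure in \eqref{def-bptreg} makes each $\bipname{\Psi_n}$ positively $1$-homogeneous in $(\tau,v)$ (for every fixed $\xi$): for $\lambda>0$, $\bip{\Psi_n}{\lambda\tau}{\lambda v}{\xi}=\lambda\bip{\Psi_n}{\tau}{v}{\xi}$, which is also consistent with the case $\tau=0$. I would then invoke the definition \eqref{def-p-gliminf} and the bijection between sequences $(\tau_n,v_n)\to(\tau,v)$ and $(\lambda\tau_n,\lambda v_n)\to(\lambda\tau,\lambda v)$ to conclude $\bpt{\lambda\tau}{\lambda v}{\xi}=\lambda\bpt\tau v\xi$.

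The main point, and the only step where Hypothesis~\ref{hyp:p-lim} is truly needed (i.e.\ the stronger $\Gamma$-$\limsup$ in the form \eqref{gamma-limsup-cond}), is convexity of $(\tau,v)\mapsto \bpt\tau v\xi$. Convexity of each bipotential in $(\tau,v)$ is a standard fact: $(\tau,v)\mapsto \tau\Psi_n(v/\tau)$ is the perspective of the convex function $\Psi_n$ and therefore convex on $(0,+\infty)\times\R^d$, with correct extension at $\tau=0$ as in \eqref{def-bptreg}; adding the linear-in-$\tau$ term $\tau\Psi_n^*(\xi)$ preserves convexity. Now, given $(\tau_1,v_1),(\tau_2,v_2)$ and $\alpha\in[0,1]$, I would use \eqref{gamma-limsup-cond} to pick a single sequence $\xi_n\to\xi$ and, along it, recovery sequences $(\tau_n^{(i)},v_n^{(i)})\to(\tau_i,v_i)$, $i=1,2$, such that
\[
\limsup_{n\to\infty}\bip{\Psi_n}{\tau_n^{(i)}}{v_n^{(i)}}{\xi_n}\le \bpt{\tau_i}{v_i}{\xi}, \qquad i=1,2.
\]
By convexity of each $\bip{\Psi_n}{\cdot}{\cdot}{\xi_n}$,
\[
\bip{\Psi_n}{\alpha\tau_n^{(1)}+(1-\alpha)\tau_n^{(2)}}{\alpha v_n^{(1)}+(1-\alpha)v_n^{(2)}}{\xi_n}\le \alpha\bip{\Psi_n}{\tau_n^{(1)}}{v_n^{(1)}}{\xi_n}+(1-\alpha)\bip{\Psi_n}{\tau_n^{(2)}}{v_n^{(2)}}{\xi_n};
\]
the right-hand side has $\limsup$ at most $\alpha\bpt{\tau_1}{v_1}{\xi}+(1-\alpha)\bpt{\tau_2}{v_2}{\xi}$, while the left-hand side, evaluated along a sequence converging to $(\alpha\tau_1+(1-\alpha)\tau_2,\alpha v_1+(1-\alpha)v_2,\xi)$, is bounded below by $\bpt{\alpha\tau_1+(1-\alpha)\tau_2}{\alpha v_1+(1-\alpha)v_2}{\xi}$ in the $\liminf$ by the definition \eqref{def-p-gliminf}. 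Combining gives convexity.

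The main obstacle is precisely this last step: a naive $\Gamma$-$\limsup$ would only furnish recovery sequences $\xi_n$ depending on $(\tau,v)$, and we could not use the convexity of the $\bipname{\Psi_n}$. That is why Hypothesis~\ref{hyp:p-lim} is stated in the uniform form \eqref{gamma-limsup-cond}, with a single $\xi_n\to\xi$ serving as a recovery sequence for every $(\tau,v)$.
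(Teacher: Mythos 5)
Your proof is correct and follows essentially the same route as the paper's: part (1) via the Fenchel inequality for each $\bipname{\Psi_n}$ combined with the $\Gamma$-$\liminf$ definition, and convexity in (2) by fixing the single recovery sequence $\xi_n\to\xi$ from Hypothesis \ref{hyp:p-lim}, choosing recovery sequences $(\tau_n^{(i)},v_n^{(i)})$ along it, and exploiting the convexity of $\bip{\Psi_n}{\cdot}{\cdot}{\xi_n}$. The only slight deviation is that for $1$-homogeneity you use the scaling bijection $(\tau_n,v_n)\leftrightarrow(\lambda\tau_n,\lambda v_n)$ directly in the $\Gamma$-$\liminf$ definition (needing no $\limsup$ information at all), whereas the paper invokes an argument "analogous" to the convexity step; your version is a bit more elementary but the substance is the same.
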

\begin{proof}  \RRR Property \EEE
(1) is an immediate consequence of \eqref{def-p-gliminf}, using that
for every $n\in \N$ there holds $\bip{\Psi_n}{\tau}{v}{\xi} \geq \langle v , \xi \rangle$
for every $(\tau,v,\xi) \in [0,+\infty) \times \R^d \times \R^d$.
\par
As for (2),
for fixed $\xi$ let $(\xi_n)_n$ fulfill \eqref{hyp-p-limsup}.  For fixed $(\tau_0,v_0)$ and $(\tau_1,v_1)$ let $(\tau_n^i,v_n^i)_n$, $i=1,2$, be two associated  recovery sequences for
$\bip{\Psi_n}{\cdot}{\cdot}{\xi_n}$
 as in \eqref{gamma-limsup-cond}. 
 Then,  for every $\lambda \in [0,1]$ there holds
\[
\begin{aligned}
\bpt{(1-\lambda)\tau_0 + \lambda \tau_1}{(1-\lambda)v_0 + \lambda v_1}{\xi} &  \stackrel{(1)}{\leq} \liminf_{n\to\infty}
\bip{\Psi_n}{(1-\lambda)\tau_n^0 + \lambda \tau_n^1}{(1-\lambda)v_n^0 + \lambda v_n^1}{\xi_n}
\\ &  \stackrel{(2)}{\leq}  \limsup_{n\to\infty}
 (1-\lambda) \bip{\Psi_n}{\tau_n^0}{v_n^0}{\xi_n}   +  \lambda  \bip{\Psi_n}{\tau_n^1}{v_n^1}{\xi_n}
 \\
 &
 \stackrel{(3)}{\leq}   (1-\lambda) \bpt{\tau_0 }{v_0}{\xi}  +\lambda \bpt{ \tau_1}{ v_1}{\xi},
 \end{aligned}
\]
where (1) follows from \eqref{def-p-gliminf}, (2) from the convexity of  the maps $\bip{\Psi_n}{\cdot}{\cdot}{\xi_n}$, and (3) from \eqref{hyp-p-limsup}.

With an analogous argument one proves that $\bpt {\cdot} {\cdot}\xi$ is $1$-positively homogeneous.
\end{proof}

We now show
 that, for $\tau>0$ the functional
$\bpt{\tau}{\cdot}{\cdot}$ has the same form  \eqref{def-bptreg}  as $\bip{\Psi_n} {\tau}{\cdot}{\cdot}$, cf.\ \eqref{structure-1-below}.
\begin{lemma}
\label{l:repr-psiri}
Assume Hypothesis \ref{hyp:p-lim}.
Let
$\psiri:
\R^d \to [0+\infty)$ be defined by
\begin{equation}
\label{psi-candidate}
\psiri(v) := \bpt 1v0.
\end{equation} Then,
$\psiri $ is a $1$-positively homogeneous dissipation potential,
   the sequence
$(\Psi_n)_n$ $\Gamma$-converges to $\psiri$, and thus
$(\Psi_n^*)_n$ $\Gamma$-converges to $\psiri^*$. Furthermore,
\begin{equation}
\label{structure-1-below}
\bpt \tau v \xi = \tau \psiri \left( \frac v\tau \right) + \tau \psiri^*(\xi) \quad \text{for all } \tau >0 \text{ and all }(v,\xi) \in
\R^d \times \R^d.
\end{equation}
\end{lemma}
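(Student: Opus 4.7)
The plan is to prove the four claims in succession: the $\Gamma$-convergence $\Psi_n\to\psiri$, the corresponding duality for the conjugates, the perspective formula \eqref{structure-1-below}, and finally the $1$-homogeneity.

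I would first show $\Psi_n\xrightarrow{\Gamma}\psiri$. For the $\liminf$-inequality, observe that $\Psi_n(v)=\bip{\Psi_n}{1}{v}{0}$ (since $\Psi_n^*(0)=0$); given any $v_n\to v$ the triples $(1,v_n,0)\to(1,v,0)$, so Hypothesis \ref{hyp:p-lim} immediately gives $\psiri(v)=\bpt{1}{v}{0}\le\liminf_{n}\Psi_n(v_n)$. For the $\limsup$-inequality, apply \eqref{hyp-p-limsup} with $\xi=0$: there exists $\xi_n\to 0$ and, for this sequence, $(\tau_n,w_n)\to(1,v)$ with $\limsup_{n}\bip{\Psi_n}{\tau_n}{w_n}{\xi_n}\le\psiri(v)$. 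Since $\bip{\Psi_n}{\tau_n}{w_n}{\xi_n}\ge\tau_n\Psi_n(w_n/\tau_n)$ and $\tau_n\to 1$, setting $\widetilde v_n:=w_n/\tau_n\to v$ produces a recovery sequence $\limsup_n\Psi_n(\widetilde v_n)\le\psiri(v)$. Since $\psiri(0)\le\liminf\Psi_n(0)=0$ gives $\psiri(0)=0$, the limit is proper, convex, and lower semicontinuous; the classical finite-dimensional duality for $\Gamma$-convergence of convex functions then yields $\Psi_n^*\xrightarrow{\Gamma}\psiri^*$.

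Next I would establish \eqref{structure-1-below} for $\tau>0$. For the upper bound, pick a recovery sequence $\widetilde v_n\to v/\tau$ for $\Psi_n$ at $v/\tau$ and a recovery sequence $\xi_n\to\xi$ for $\Psi_n^*$ at $\xi$, and set $v_n:=\tau\widetilde v_n\to v$. Then
\begin{equation*}
\bip{\Psi_n}{\tau}{v_n}{\xi_n}=\tau\Psi_n(\widetilde v_n)+\tau\Psi_n^*(\xi_n)\longrightarrow \tau\psiri(v/\tau)+\tau\psiri^*(\xi),
\end{equation*}
and the $\gl\liminf$ definition of $\mathfrak{p}$ gives $\bpt{\tau}{v}{\xi}\le\tau\psiri(v/\tau)+\tau\psiri^*(\xi)$. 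For the lower bound, let $(\tau_n,v_n,\xi_n)\to(\tau,v,\xi)$. Since $\tau>0$, eventually $\tau_n>0$ and $v_n/\tau_n\to v/\tau$; superadditivity of $\liminf$ on non-negative sequences together with the $\Gamma$-$\liminf$ estimates for $\Psi_n$ and $\Psi_n^*$ yields $\liminf_n\bigl(\tau_n\Psi_n(v_n/\tau_n)+\tau_n\Psi_n^*(\xi_n)\bigr)\ge \tau\psiri(v/\tau)+\tau\psiri^*(\xi)$. Taking the infimum over approximating sequences completes the formula.

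The most delicate assertion is the $1$-positive homogeneity of $\psiri$, which I expect to be the main obstacle. From \eqref{structure-1-below} with $\xi=0$ we have $\bpt{\tau}{v}{0}=\tau\psiri(v/\tau)$, and Lemma \ref{l:3} tells us that $(\tau,v)\mapsto\bpt{\tau}{v}{0}$ is positively $1$-homogeneous jointly. However, joint $1$-homogeneity of the perspective $\tau\psiri(v/\tau)$ is automatic for any convex $\psiri$ and does \emph{not} by itself force $\psiri$ to be $1$-homogeneous. The conclusion must therefore be extracted by an additional argument: either by identifying a ray along which the trace of $\bpt{\tau}{v}{0}$ on $\tau\to 0^+$ (or $\tau\to+\infty$) forces linear growth on $\psiri$, or by exploiting the convex-duality symmetry $\bpt{\tau}{v}{\xi}=\tau\psiri(v/\tau)+\tau\psiri^*(\xi)$ together with the fact that $\psiri^*$ must be an indicator function of the stable set $K^*$ (which is the hallmark of a $1$-homogeneous $\psiri$). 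I would pursue the second route: comparing the expression $\tau\psiri^*(\xi)$ with what Hypothesis \ref{hyp:p-lim} forces $\bpt{\cdot}{0}{\xi}$ to be (using $1$-homogeneity of $\mathfrak{p}(\cdot,\cdot,\xi)$ in $(\tau,v)$ at $v=0$ gives $\tau\psiri^*(\xi)=\tau\psiri^*(\xi)$ consistently only when $\psiri^*$ is $\{0,+\infty\}$-valued, i.e.\ indicator of a closed convex set), to conclude that $\psiri^*$ is an indicator, hence $\psiri$ is its support function and therefore positively $1$-homogeneous. The non-degeneracy statement would then follow from the strengthened coercivity assumption \eqref{4-non-degeneracy}.
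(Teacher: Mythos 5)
Your treatment of the $\Gamma$-convergence $\Psi_n\to\psiri$, the dual convergence $\Psi_n^*\to\psiri^*$, and the perspective formula \eqref{structure-1-below} is correct and parallels the paper's (very brief) argument, which applies \eqref{def-p-gliminf} and \eqref{hyp-p-limsup} at $\tau=1$, $\xi=0$ and then cites Attouch's duality theorem for $\Gamma$-limits of conjugates.

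Your unease about the $1$-homogeneity of $\psiri$, however, is well founded, and the paper's own proof is no more explicit than your worry: it asserts that $\psiri$ is ``convex and $1$-homogeneous thanks to Lemma \ref{l:3}''. But Lemma \ref{l:3} only supplies \emph{joint} $1$-homogeneity of $(\tau,v)\mapsto\bpt{\tau}{v}{0}$, which — as you observe — holds automatically for the perspective of any convex function and does not by itself constrain $\psiri$. Neither of your two remedies closes this: setting $v=0$ in \eqref{structure-1-below} and applying joint $1$-homogeneity yields only the tautology $\lambda\tau\psiri^*(\xi)=\lambda\tau\psiri^*(\xi)$, satisfied by \emph{every} $\psiri^*$, so the conclusion that $\psiri^*$ must be $\{0,+\infty\}$-valued is unsupported; and the first route is equally stuck, since $\lim_{\tau\downarrow 0}\tau\psiri(v/\tau)$ is the recession function of $\psiri$, which is always $1$-homogeneous, and equating it with $\psiri(v)$ presupposes the property. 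Indeed the constant sequence $\Psi_n(v)\equiv A\|v\|_1+\|v\|_1^2$ satisfies Hypothesis \ref{hyp:p-lim} (take $\xi_n\equiv\xi$) and even \eqref{4-non-degeneracy}, yet $\bpt{1}{v}{0}=A\|v\|_1+\|v\|_1^2$ is not $1$-homogeneous. So the $1$-homogeneity of $\psiri$ cannot be extracted from Hypothesis \ref{hyp:p-lim} alone: what makes it true in the paper's two intended families is that the superlinear part of $\Psi_n$ degenerates (it carries an $\eps_n$ or $n^{-1}$ factor), and this additional structure is not encoded in the hypothesis as stated.
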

\begin{proof}
Observe that $\psiri$   from \eqref{psi-candidate}   is convex and $1$-homogeneous thanks to Lemma \ref{l:3}.
It follows from \eqref{def-p-gliminf}
 and \eqref{hyp-p-limsup}, applied with the choices $\tau=1$ and $\xi=0$, that $\psiri = \gl \lim_{n \to \infty} \Psi_n$. Then, $(\Psi_n^*)_n$ $\Gamma$-converges to $\psiri^*$ by \cite[Thm.\ 2.18, p.\ 495]{Attouch}.
As a consequence  of
these convergences and of \eqref{def-bptreg}, we have 
 \eqref{structure-1-below}. 
\end{proof}

Our next two results address the characterization of $\bptname$ for $\tau=0$, providing a formula for  $\bptnew 0vw$
in the  two cases $\psiri^*(\xi)<+\infty$
 and $\psiri^*(\xi)=+\infty$.
 \begin{lemma}
 \label{l:zero-1}
 Assume Hypothesis \ref{hyp:p-lim}.
If $\psiri^*(\xi) < +\infty$, then
\begin{equation}
\label{1st-repr-psiri}
\bpt 0v \xi=\liminf_{\tau \to 0} \tau \psiri\left(\frac{v}{\tau}\right) = \psiri(v) \qquad \text{for all } v \in \R^d.
\end{equation}
\end{lemma}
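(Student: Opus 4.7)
The identity $\liminf_{\tau\to 0}\tau\psiri(v/\tau)=\psiri(v)$ is immediate from the $1$-positive homogeneity of $\psiri$ established in Lemma \ref{l:repr-psiri}, so the real content is the equality $\bpt 0 v \xi=\psiri(v)$, which I plan to obtain via two matching inequalities.

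For the upper bound $\bpt 0v\xi\leq\psiri(v)$, I would exploit the fact that $\bptname$ is automatically lower semicontinuous, being defined as a $\Gamma$-$\liminf$ in Hypothesis \ref{hyp:p-lim}. Applied to the constant approximating sequence $(\tau_n,v,\xi)$ with $\tau_n\downarrow 0$ this yields $\bpt 0 v\xi\leq\liminf_{\tau\downarrow 0}\bpt\tau v\xi$. Now invoke Lemma \ref{l:repr-psiri} and the $1$-homogeneity of $\psiri$ to rewrite, for $\tau>0$,
\[
\bpt\tau v\xi=\tau\psiri(v/\tau)+\tau\psiri^*(\xi)=\psiri(v)+\tau\psiri^*(\xi).
\]
Since by hypothesis $\psiri^*(\xi)<+\infty$, the viscous remainder $\tau\psiri^*(\xi)$ vanishes as $\tau\downarrow 0$, and the upper bound follows.

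For the lower bound $\bpt 0 v\xi\geq\psiri(v)$, the case $v=0$ is trivial, so suppose $v\neq 0$. Pick any sequences $\tau_n\to 0$, $v_n\to v$, $\xi_n\to\xi$; eventually $v_n\neq 0$, so  $\bip{\Psi_n}{\tau_n}{v_n}{\xi_n}$ is finite only when $\tau_n>0$, in which case Young's inequality gives, for an \emph{arbitrary} sequence $\eta_n\to\eta\in\R^d$,
\[
\bip{\Psi_n}{\tau_n}{v_n}{\xi_n}\geq\tau_n\Psi_n(v_n/\tau_n)\geq\langle v_n,\eta_n\rangle-\tau_n\Psi_n^*(\eta_n).
\]
Lemma \ref{l:repr-psiri} ensures $\Psi_n\xrightarrow{\Gamma}\psiri$, hence $\Psi_n^*\xrightarrow{\Gamma}\psiri^*=I_{K^*}$ by \cite[Thm.\,2.18, p.\,495]{Attouch}. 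For any $\eta\in K^*$ I would pick a recovery sequence $\eta_n\to\eta$ with $\limsup_n\Psi_n^*(\eta_n)\leq\psiri^*(\eta)=0$; since $\Psi_n^*\geq 0$ this forces $\Psi_n^*(\eta_n)\to 0$, and combined with $\tau_n\to 0$ the penalty $\tau_n\Psi_n^*(\eta_n)$ disappears in the limit. Passing to the $\liminf$ and then taking the supremum over $\eta\in K^*$, the support-function representation \eqref{support-function} of $\psiri$ gives
\[
\liminf_n\bip{\Psi_n}{\tau_n}{v_n}{\xi_n}\geq\sup_{\eta\in K^*}\langle v,\eta\rangle=\psiri(v).
\]
Taking the infimum over admissible approximating sequences then yields $\bpt 0v\xi\geq\psiri(v)$.

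The main obstacle is precisely the lower bound: one must handle an \emph{arbitrary} approximating triple $(\tau_n,v_n,\xi_n)$ in which the ratio $v_n/\tau_n$ blows up, and extract the $1$-homogeneous information from $\Psi_n$ without direct access to recession analysis. The trick is the Young-type dualization with a carefully chosen recovery sequence for $\Psi_n^*$ at a point of $K^*$, which converts the singular rescaled term $\tau_n\Psi_n(v_n/\tau_n)$ into a linear pairing plus a controllable remainder. The assumption $\psiri^*(\xi)<+\infty$ is crucial only for the upper bound: without it the viscous penalty does not vanish, and $\bpt 0 v\xi$ genuinely exceeds $\psiri(v)$, which is exactly the content of the companion lemma treating $\psiri^*(\xi)=+\infty$.
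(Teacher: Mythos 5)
Your proof is correct. The upper bound is essentially identical to the paper's: lower semicontinuity of the $\Gamma$-$\liminf$ applied to a constant $(v,\xi)$-sequence, combined with the representation $\bpt\tau v\xi=\psiri(v)+\tau\psiri^*(\xi)$ for $\tau>0$ and the finiteness of $\psiri^*(\xi)$, so the viscous remainder vanishes.

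For the lower bound you take a genuinely different route. The paper stays on the primal side: it exploits the monotonicity of the perspective map $\tau\mapsto\tau\Psi(v/\tau)$ (for a convex $\Psi$ with $\Psi(0)=0$) to freeze the scaling parameter at a fixed $\sigma>0$, estimating $\tau_n\Psi_n(v_n/\tau_n)\geq\sigma\Psi_n(v_n/\sigma)$, and then passes to the limit using the $\Gamma$-$\liminf$ inequality for $\Psi_n\to\psiri$; the $1$-homogeneity of $\psiri$ then makes $\sigma\psiri(v/\sigma)$ identically equal to $\psiri(v)$. You instead dualize: Fenchel--Young against an arbitrary test point $\eta\in K^*$ with a $\Gamma$-$\limsup$ recovery sequence $\eta_n$ for $\Psi_n^*$, so the penalty $\tau_n\Psi_n^*(\eta_n)$ is doubly small, and the support-function representation $\psiri=\sup_{\eta\in K^*}\langle\cdot,\eta\rangle$ closes the argument. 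Your version is cleaner in one respect: it works for an arbitrary approximating triple $(\tau_n,v_n,\xi_n)$ and does not need to single out the optimal sequence furnished by \eqref{hyp-p-limsup}, whereas the paper's write-up explicitly invokes \eqref{hyp-p-limsup} to choose a sequence attaining $\bpt 0v\xi$ (though, as you implicitly realize, this is not strictly necessary there either). The paper's version is more elementary in that it avoids conjugate duality entirely and relies only on the convexity and normalization of $\Psi_n$; it also avoids invoking the representation $\psiri^*=I_{K^*}$ and the structure of $K^*$. Both are valid proofs of the same lemma.

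One small remark on your write-up: the final sentence "Taking the infimum over admissible approximating sequences then yields..." is slightly loose — what you have actually shown is that the lower bound $\psiri(v)$ holds for \emph{every} admissible sequence, hence passes to the infimum defining $\bpt 0v\xi$; it is worth phrasing it that way to avoid ambiguity about the order in which the supremum over $\eta$ and the infimum over sequences are taken.
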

\begin{proof}
It follows from \eqref{structure-1-below} and the fact that $\psiri^*(\xi) < +\infty$
that 
\begin{equation}
\label{leqlimf}
\bpt 0v \xi \leq \liminf_{\tau \to 0} \bpt \tau v \xi  \leq \liminf_{\tau \to 0} \tau \psiri\left(\frac{v}{\tau}\right).
\end{equation}

 To prove the converse inequality,
we
preliminarily
 observe that,  \RRR for any dissipation potential $\Psi$, 
for every $v \in \R^d$
 the map $\tau \mapsto \Psi \left( \tfrac{v}{\tau}\right) $ is non-increasing.
Therefore for all $0 <\tau<\sigma <1$ we have
\begin{equation}
\label{che-fatica}
\tau \Psi \left(\frac{v}{\tau}\right) \geq \sigma \Psi \left(\frac{v}{\sigma}\right). 
\end{equation} \EEE
Now,
let us fix a sequence $\xi_n \to \xi$ for which \eqref{hyp-p-limsup} holds, and accordingly
a sequence
$(\tau_n,v_n) \to (0,v)$ such that  $\bpt 0 v \xi = \liminf_{n \to \infty} ( \tau_n \Psi_n ({v_n}/{\tau_n}) + \tau_n \Psi_n^* (\xi_n) ) $.
It follows from  inequality \eqref{che-fatica} applied to  the functionals $\Psi_n$ that for every $\sigma \in (0,1)$
\[
\liminf_{n \to \infty} \left( \tau_n \Psi_n \left(\frac{v_n}{\tau_n}\right) + \tau_n \Psi_n^* (\xi_n) \right) \geq
\liminf_{n \to \infty} \left( \sigma \Psi_n \left(\frac{v_n}{\sigma}\right)  
\right)=
 \sigma \psiri \left(\frac{v}{\sigma}\right),
\]
where we have also exploited the positivity of the functionals  \RRR $\Psi_n^*$. \EEE
Therefore, in view of \eqref{hyp-p-limsup} we find
\[
\bpt 0v \xi \geq
 \sigma \psiri \left(\frac{v}{\sigma}\right) 
\]
and  conclude the converse of \eqref{leqlimf} passing to the limit as $\sigma \to 0$.
\end{proof}
\begin{lemma}
\label{l:2}
 Assume Hypothesis \ref{hyp:p-lim}.
If $\psiri^*(\xi) =+ \infty$, then
\begin{equation}
\label{2nd-repre}
\bpt 0v\xi=\gl \liminf_{n \to \infty} \inf_{\tau>0} \bip{\Psi_n}\tau v \xi \qquad \text{for all } v \in \R^d.
\end{equation}
\end{lemma}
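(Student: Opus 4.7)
The plan is to prove the two inequalities in \eqref{2nd-repre} separately, with the hypothesis $\psiri^*(\xi)=+\infty$ entering in a uniform way through Lemma \ref{l:repr-psiri}. Indeed, since $\Psi_n^*$ $\Gamma$-converges to $\psiri^*$, any sequence $\xi_n\to\xi$ satisfies $\liminf_n\Psi_n^*(\xi_n)\geq \psiri^*(\xi)=+\infty$, i.e.\ $\Psi_n^*(\xi_n)\to+\infty$; in particular $\Psi_n^*(\xi)\to+\infty$. This is the mechanism forcing optimal $\tau$'s towards $0$, thus activating the $\tau=0$ regime in which $\bpt 0 v \xi$ lives.

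\textbf{Direction $\le$.} Denoting $f_n(v):=\inf_{\tau>0}\bip{\Psi_n}{\tau}{v}{\xi}$, let $v_n\to v$ be an arbitrary sequence. The aim is to show that $\liminf_n f_n(v_n)\geq \bpt 0 v \xi$. If the liminf is $+\infty$ the bound is trivial; otherwise extract a subsequence (not relabeled) along which $f_n(v_n)\to L<+\infty$ and pick $\tau_n^*>0$ with $\bip{\Psi_n}{\tau_n^*}{v_n}{\xi}\leq f_n(v_n)+\frac1n\leq L+2$. Dropping the nonnegative term $\tau_n^*\Psi_n(v_n/\tau_n^*)$ gives $\tau_n^*\Psi_n^*(\xi)\leq L+2$, and since $\Psi_n^*(\xi)\to +\infty$ we conclude $\tau_n^*\to 0$. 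Hence $(\tau_n^*,v_n,\xi)\to(0,v,\xi)$ (padding to a full sequence if needed, using that $\liminf$ along a full sequence is bounded by the $\liminf$ along any subsequence), and by the very definition of $\bptname$ as a $\Gamma$-liminf,
\[
\bpt 0 v \xi \;\leq\; \liminf_n \bip{\Psi_n}{\tau_n^*}{v_n}{\xi} \;\leq\; \liminf_n\bigl(f_n(v_n)+\tfrac1n\bigr)\;=\;L.
\]
Taking the infimum over $v_n\to v$ yields $\bpt 0 v \xi\leq \gl\liminf_n f_n(v)$.

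\textbf{Direction $\ge$.} The aim is now to exhibit, for every $\varepsilon>0$, a sequence $\tilde v_n\to v$ with $\liminf_n f_n(\tilde v_n)\leq \bpt 0 v \xi+\varepsilon$. Invoke Hypothesis \ref{hyp:p-lim}\eqref{hyp-p-limsup}: there exists a distinguished recovery sequence $\xi_n\to\xi$ such that, for some $(\tau_n,\tilde v_n)\to(0,v)$,
\[
\limsup_{n\to\infty}\bip{\Psi_n}{\tau_n}{\tilde v_n}{\xi_n}\;\leq\;\bpt 0 v \xi+\varepsilon.
\]
Choosing the specific competitor $\sigma=\tau_n$ in the definition of $f_n(\tilde v_n)$ gives
\[
f_n(\tilde v_n)\;\leq\;\bip{\Psi_n}{\tau_n}{\tilde v_n}{\xi}
\;=\;\bip{\Psi_n}{\tau_n}{\tilde v_n}{\xi_n}\;+\;\tau_n\bigl(\Psi_n^*(\xi)-\Psi_n^*(\xi_n)\bigr).
\]
Thus the crux of the matter is to show that the error term $\tau_n(\Psi_n^*(\xi)-\Psi_n^*(\xi_n))$ has non-positive $\limsup$.

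\textbf{Main obstacle.} Controlling this error is the delicate step: both $\Psi_n^*(\xi)$ and $\Psi_n^*(\xi_n)$ diverge while $\tau_n\to 0$, so the product is of indeterminate form. The plan is to exploit the structural consequence of Hypothesis \ref{hyp:p-lim}: evaluating the limsup recovery at $(\tau,v)=(1,0)$ yields $\limsup_n\Psi_n^*(\xi_n)\leq \bpt 1 0 \xi=\psiri^*(\xi)=+\infty$, which combined with the liminf inequality gives $\Psi_n^*(\xi_n)\to+\infty$. A refinement of the recovery sequence, together with the convexity of $\Psi_n^*$, should then allow one to match $\Psi_n^*(\xi_n)$ with $\Psi_n^*(\xi)$ up to an $o(1/\tau_n)$ error. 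Specifically, one may replace $\xi_n$ by $\xi$ itself in the recovery sequence (taking advantage of the fact that when $\psiri^*(\xi)=+\infty$ the constant sequence $\xi_n\equiv\xi$ is itself a legitimate $\Gamma$-limsup recovery for $\psiri^*$), which makes the error term identically zero. Once the error is shown to vanish,
\[
\liminf_n f_n(\tilde v_n)\;\leq\;\liminf_n \bip{\Psi_n}{\tau_n}{\tilde v_n}{\xi_n}\;\leq\;\bpt 0 v \xi+\varepsilon,
\]
and taking the infimum over $\tilde v_n\to v$ and letting $\varepsilon\downarrow 0$ concludes the proof.
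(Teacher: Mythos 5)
Your Direction $\le$ --- proving $\bpt 0 v \xi \leq \gl\liminf_n \inf_{\tau>0}\bip{\Psi_n}{\tau}{v}{\xi}$, which is the direction the paper calls the ``converse'' one --- is in substance the paper's argument: one extracts near-optimal scales $\tau_n^*$ and observes that the diverging term $\tau_n^*\Psi_n^*(\cdot)$ forces $\tau_n^*\to 0$, after which the definition \eqref{def-p-gliminf} of $\bptname$ delivers the bound. (The paper uses an exact minimizer $\bar\tau_n$, legitimate since $v\neq 0$ and $\Psi_n^*(\xi_n)>0$ for large $n$; your $1/n$-near-minimizers work equally well.) One small correction: the $\Gamma$-liminf in \eqref{2nd-repre} is taken jointly in the pair $(v,\xi)$, as is explicit both in \eqref{def-p-gliminf} and in the paper's own opening ``Take $(v_n,\xi_n)\to(v,\xi)\ldots$''; by freezing $\xi$ you only bound the (larger) $\Gamma$-liminf in $v$ alone. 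This is harmless to fix: replace $\Psi_n^*(\xi)$ with $\Psi_n^*(\xi_n)$ and use, as you already note in your preamble, that $\Psi_n^*(\xi_n)\to+\infty$ for \emph{every} $\xi_n\to\xi$ by $\Gamma$-convergence of $\Psi_n^*$ to $\psiri^*$.

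The genuine gap is in your Direction $\ge$, and its root is the same misreading. Once one accepts that the $\Gamma$-liminf is in $(v,\xi)$, this direction follows from \eqref{def-p-gliminf} alone, in one line and without invoking Hypothesis~\ref{hyp:p-lim}: for any $(\tau_n,v_n,\xi_n)\to(0,v,\xi)$ one has $\inf_{\tau>0}\bip{\Psi_n}{\tau}{v_n}{\xi_n}\leq \bip{\Psi_n}{\tau_n}{v_n}{\xi_n}$, hence $\gl\liminf_n\inf_{\tau>0}\bipname{\Psi_n}\leq\liminf_n\bip{\Psi_n}{\tau_n}{v_n}{\xi_n}$, and infimizing over admissible sequences gives $\gl\liminf_n\inf_{\tau>0}\bipname{\Psi_n}\leq\bpt 0 v \xi$. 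This is exactly what the paper means by ``Inequality $\geq$ follows from the definition of $\bptname$.'' By instead fixing $\xi$ and importing the recovery sequence $\xi_n$ of \eqref{hyp-p-limsup}, you create the indeterminate error $\tau_n\bigl(\Psi_n^*(\xi)-\Psi_n^*(\xi_n)\bigr)$, and your proposed fix --- substituting $\xi_n\equiv\xi$ because constants are $\Gamma$-$\limsup$ recoveries for $\psiri^*$ when $\psiri^*(\xi)=+\infty$ --- is not available: \eqref{hyp-p-limsup} asserts only the \emph{existence} of some $\xi_n\to\xi$ that is a recovery for the whole bipotential $\bptname$ jointly in $(\tau,v)$, not merely for $\psiri^*$, and nothing allows you to replace it with the constant sequence (if one always could, \eqref{hyp-p-limsup} would be an empty hypothesis). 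As written, your $\ge$ direction does not close.
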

\begin{proof}
Inequality $\geq$ follows from the  definition of $\bptname$. 
To prove the converse one,  
\RRR we may suppose that $v \neq 0$, since $\bpt 00{\xi} =0$. 
Take $(v_n,\xi_n)\to (v,\xi)$
 that attains $\gl \liminf_{n \to \infty} \inf_{\tau>0} \bip{\Psi_n}\tau v \xi$, i.e.\
$\inf_{\tau>0}  \bip{\Psi_n}\tau {v_n} {\xi_n}  \to \gl \liminf_{n
\to \infty} \inf_{\tau>0}  \bip{\Psi_n}\tau v \xi$. 
In particular,  $\liminf_{n\to\infty} \Psi_n^*(\xi_n)=+\infty$. Therefore, we may choose 
 $\bar{\tau}_n$ as
\[
\bar{\tau}_n \in \mathrm{Argmin}_{\tau>0} \left( \tau \Psi_n\left(\frac{v_n}{\tau}\right) + \tau \Psi_n^*(\xi_n) \right).
\]
Since $\liminf_{n\to\infty} \Psi_n^*(\xi_n)=+\infty$, \EEE it is clear that $\bar{\tau}_n \to 0$, hence
\[
\gl \liminf_{n \to \infty} \inf_{\tau>0}  \bip{\Psi_n} \tau v \xi =
\lim_{n \to \infty} \left( \bar{\tau}_n \Psi_n\left(\frac{v_n}{\bar{\tau}_n}\right) + \bar{\tau}_n \Psi_n^*(\xi_n) \right) \geq \bpt 0v{\xi}
\]
thanks to \eqref{def-p-gliminf}.
\end{proof}

We \RRR now prove  \EEE  a pseudo-monotonicity result for $\bptname$.
 \begin{lemma}
 \label{l:5}
 Assume Hypothesis \ref{hyp:p-lim}. Then,
 for every $\tau,\,\bar\tau \in [0,+\infty)$, $v,\,\bar{v} \in \R^d$ and  $\xi,\,\bar{\xi}\in \R^d$ we have that
\begin{equation}
\label{monotoniticity-p}
\bigg(\bpt \tau v \xi -  \bpt \tau v{\bar{\xi}} \bigg)\bigg(  \bpt {{\bar\tau}}{\bar{v}}\xi  - \bpt {\bar\tau}{\bar{v}}{\bar{\xi}}\bigg) \geq 0.
\end{equation}
\end{lemma}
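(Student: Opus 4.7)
My plan is to exploit the explicit structure of $\bptname$ established in Lemma \ref{l:repr-psiri} (for $\tau>0$) and in Lemma \ref{l:zero-1} (for $\tau=0$): in both regimes, the map $\xi\mapsto \bpt{\sigma}{w}{\xi}$ turns out to be monotone non-decreasing with respect to the ordering induced by $\psiri^*$, for every fixed $(\sigma,w)\in [0,+\infty)\times\R^d$. The pseudo-monotonicity \eqref{monotoniticity-p} is precisely the comonotonicity of $\xi\mapsto\bpt{\tau}{v}{\xi}$ and $\xi\mapsto\bpt{\bar\tau}{\bar v}{\xi}$, and will then follow by comparing signs.

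Concretely, the proof splits into cases according to whether $\tau$ and $\bar\tau$ are strictly positive. If $\tau>0$ and $\bar\tau>0$, the decomposition \eqref{structure-1-below} yields
\[
\bpt{\tau}{v}{\xi}-\bpt{\tau}{v}{\bar\xi}=\tau\bigl(\psiri^*(\xi)-\psiri^*(\bar\xi)\bigr),\qquad \bpt{\bar\tau}{\bar v}{\xi}-\bpt{\bar\tau}{\bar v}{\bar\xi}=\bar\tau\bigl(\psiri^*(\xi)-\psiri^*(\bar\xi)\bigr),
\]
so the product in \eqref{monotoniticity-p} reduces to the manifestly non-negative quantity $\tau\bar\tau\,[\psiri^*(\xi)-\psiri^*(\bar\xi)]^2$. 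If instead one of the time parameters, say $\tau$, vanishes (the symmetric case $\bar\tau=0$, $\tau>0$ is handled identically), I combine Lemma \ref{l:zero-1}---which asserts $\bpt{0}{v}{\xi}=\psiri(v)$ whenever $\psiri^*(\xi)<+\infty$---with the lower bound $\bpt{0}{v}{\xi}\geq \psiri(v)$ from \eqref{psiri-non-deg}. This readily delivers $\bpt{0}{v}{\xi}\leq \bpt{0}{v}{\bar\xi}$ whenever $\psiri^*(\xi)\leq \psiri^*(\bar\xi)$ with at least one of the two values finite, and the pseudo-monotonicity then follows by pairing with the analogous monotone dependence on $\xi$ produced for $(\bar\tau,\bar v)$ either by Case $1$ (when $\bar\tau>0$) or by the very same argument (when $\bar\tau=0$).

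The main obstacle will be the degenerate subcase $\tau=\bar\tau=0$ with both $\psiri^*(\xi)=\psiri^*(\bar\xi)=+\infty$, where Lemma \ref{l:zero-1} does not apply and no direct comparison of $\bpt{0}{v}{\xi}$ with $\bpt{0}{v}{\bar\xi}$ is available. My intended remedy is to fall back on Lemma \ref{l:2}, which represents $\bpt{0}{v}{\xi}=\gl\liminf_{n} \inf_{\tau>0}\bip{\Psi_n}{\tau}{v}{\xi}$ and likewise for $\bar\xi$. At level $n$ the pseudo-monotonicity of $\bip{\Psi_n}$ is essentially tautological: for $\tau>0$ one has $\bip{\Psi_n}{\tau}{v}{\xi}-\bip{\Psi_n}{\tau}{v}{\bar\xi}=\tau(\Psi_n^*(\xi)-\Psi_n^*(\bar\xi))$, whose sign depends only on $(\xi,\bar\xi)$ and not on $(\tau,v)$. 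Extracting common recovery sequences $\xi_n\to\xi$ and $\bar\xi_n\to\bar\xi$ through the strong limsup condition \eqref{hyp-p-limsup} of Hypothesis \ref{hyp:p-lim}, together with diagonal choices of $(\tau_n,v_n)$ and $(\bar\tau_n,\bar v_n)$, should then transfer the inequality to the limit $n\to\infty$.
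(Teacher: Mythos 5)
The case split in Steps~1--2 is unnecessary and, more seriously, it introduces a circular dependence. For $\tau=0$ with $\psiri^*(\xi)<+\infty$ but $\psiri^*(\bar\xi)=+\infty$ you invoke $\bpt{0}{v}{\bar\xi}\geq\psiri(v)$, i.e.\ \eqref{psiri-non-deg}. But look at where \eqref{psiri-non-deg} comes from: in the proof of Theorem~\ref{thm:generation}, the bound $\bpt 0v\xi\geq\psiri(v)$ is obtained from \eqref{structure-1-below} when $\tau>0$, from Lemma~\ref{l:zero-1} when $\tau=0$ and $\psiri^*(\xi)<+\infty$, and \emph{precisely from Lemma~\ref{l:5}} (the present statement) when $\tau=0$ and $\psiri^*(\xi)=+\infty$. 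So the one regime in which you need \eqref{psiri-non-deg} is the one regime in which its only available proof is the lemma you are proving. There is also a lesser defect: for $\tau,\bar\tau>0$ with $\psiri^*(\xi)=\psiri^*(\bar\xi)=+\infty$ your formula $\tau\bar\tau\bigl(\psiri^*(\xi)-\psiri^*(\bar\xi)\bigr)^2$ is an $\infty-\infty$ expression, and the cases with exactly one of $\tau,\bar\tau$ vanishing but both conjugates infinite are not visibly covered by either of your branches.

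The mechanism you sketch for the ``degenerate subcase'' is in fact the entire proof, and it does not require any of the explicit representations from Lemmas~\ref{l:repr-psiri}--\ref{l:2}: for every $n$ and every $(\tau_n,v_n)$ the sign of $\bip{\Psi_n}{\tau_n}{v_n}{\xi_n}-\bip{\Psi_n}{\tau_n}{v_n}{\bar\xi_n}=\tau_n\bigl(\Psi_n^*(\xi_n)-\Psi_n^*(\bar\xi_n)\bigr)$ depends only on $(\xi_n,\bar\xi_n)$, not on $(\tau_n,v_n)$. Assume $\bpt\tau v\xi>\bpt\tau v{\bar\xi}$ (the other case is trivial). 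Take a recovery sequence $(\tau_n,v_n,\bar\xi_n)\to(\tau,v,\bar\xi)$ realizing $\bpt\tau v{\bar\xi}$, as supplied by \eqref{hyp-p-limsup}; pairing it with any $\xi_n\to\xi$ and the liminf inequality for $\bpt\tau v\xi$ forces $\Psi_n^*(\xi_n)\geq\Psi_n^*(\bar\xi_n)$ for $n$ large. Now replace $(\tau_n,v_n)$ by a recovery sequence $(\bar\tau_n,\bar v_n)$ for $\bpt{\bar\tau}{\bar v}{\xi}$ along the \emph{same} $\xi_n$ (again from \eqref{hyp-p-limsup}): the already-established sign of $\Psi_n^*(\xi_n)-\Psi_n^*(\bar\xi_n)$ immediately yields $\bip{\Psi_n}{\bar\tau_n}{\bar v_n}{\xi_n}\geq\bip{\Psi_n}{\bar\tau_n}{\bar v_n}{\bar\xi_n}$, and the liminf inequality at $(\bar\tau,\bar v,\bar\xi)$ gives $\bpt{\bar\tau}{\bar v}{\xi}\geq\bpt{\bar\tau}{\bar v}{\bar\xi}$. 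Running this argument for all $(\tau,v)$ and $(\bar\tau,\bar v)$ at once removes the circularity, the $\infty-\infty$ ambiguities, and the case analysis.
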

\begin{proof}
Observe that \eqref{monotoniticity-p} holds  for the bipotentials $\bipname{\Psi_n}$: indeed, in that case it reduces to
 $ \tau \bar{\tau} (\Psi_n^*(\xi)-\Psi_n^*(\bar{\xi}))^2 \geq 0$.

Assume that $\bpt \tau v \xi > \bpt \tau v{\bar{\xi}} $ and choose $\bar{\xi}_n$ as in \eqref{hyp-p-limsup}
with $(\tau_n,v_n)$ such that
\begin{equation}
\label{from-limsup}
(\tau_n,v_n,\bar{\xi}_n) \to (\tau,v,\bar{\xi}), \qquad \bip{\Psi_n}{\tau_n}{v_n}{\bar{\xi}_n} \to \bpt{\tau}{v}{\bar{\xi}}.
\end{equation}
It follows from
 the definition
 \eqref{def-p-gliminf} of $\bptname$ that $\bpt{\tau}{v}{\xi} \leq \liminf_{n \to \infty} \bip{\Psi_n}{\tau_n}{v_n}{\xi_n}$
 for every sequence $\xi_n \to \xi$ in $\R^d$, and for $(\tau_n, v_n)$ as in \eqref{from-limsup}.
  Then
\begin{equation}
\label{ehsi}
0 < \bpt \tau v \xi - \bpt{\tau}{v}{\bar{\xi}} \leq \liminf_{n \to \infty}\bigg( \bip{\Psi_n}{\tau_n}{v_n}{\xi_n}- \bip{\Psi_n}{\tau_n}{v_n}{\bar{\xi}_n}\bigg).
\end{equation}
Therefore, for sufficiently big $n$ we have that
\begin{equation}
\label{at-level-n}
\bip{\Psi_n}{\tau_n}{v_n}{\xi_n}-\bip{\Psi_n}{\tau_n}{v_n}{\bar{\xi}_n} \geq 0.
\end{equation}

Now, again in view of \eqref{hyp-p-limsup}, choose
$\xi_n \to \xi$ (notice that \eqref{ehsi} holds for \emph{any} sequence $\xi_n$  converging to $\xi$) and
$\bar{\tau}_n\to \bar \tau$, $\bar{v}_n \to \bar v$ such that
$\limsup_{n \to \infty} 
 \bip{\Psi_n}  {{\bar\tau}_n}{\bar{v}_n}{\xi_n}  
 \leq \bpt {{\bar\tau}}{\bar{v}}\xi $. Since
$\liminf_{n \to \infty} \bip{\Psi_n}{{\bar\tau}_n}{\bar{v}_n}{\bar{\xi}_n} \geq \bpt {\bar\tau}{\bar{v}}{\bar{\xi}}$ by \eqref{def-p-gliminf}, we conclude that
\[
\bpt {{\bar\tau}}{\bar{v}}\xi  - \bpt {\bar\tau}{\bar{v}}{\bar{\xi}}
\geq \limsup_{n \to \infty} \left(  \bip{\Psi_n}  {{\bar\tau}_n}{\bar{v}_n}{\xi_n}  - \bip{\Psi_n} {{\bar\tau}_n}{\bar{v}_n}{\bar{\xi}_n}  \right) \geq 0,
\]
taking into account that
$
 \bip{\Psi_n}{{\bar\tau}_n}{\bar{v}_n}{\xi_n}  - \bip{\Psi_n}{{\bar\tau}_n}{\bar{v}_n}{\bar{\xi}_n}\geq 0
$ for sufficiently big $n$
thanks to \eqref{at-level-n} and the previously observed monotonicity property \eqref{monotoniticity-p} for
$\bipname{\Psi_n}$.  \RRR Thus, \eqref{monotoniticity-p} follows.  \EEE
\end{proof}
\par
Finally, let us consider  contact sets
associated with the bipotentials $\bipname{\Psi_n}$, i.e.\
\[
\Ctc{\bipname{\Psi_n}}:=  \left\{ (\tau,v,\xi) \in [0,+\infty) \times \R^d \times \R^d  \, : \langle v,\xi \rangle = \bip{\Psi_n} \tau v\xi  \right\}.
\]
Observe that for every $n\in \N$
\begin{enumerate}
\item  $\Ctc{\bipname{\Psi_n}} \cap \{0\} \times \R^d \times \R^d =  \{0\} \times \{ 0\} \times \R^d$;
\item for $\tau>0$, if $(\tau,v,\xi) \in \Ctc{\bipname{\Psi_n}}$, then
%
$\tau \in \argmin_{\sigma \in (0,+\infty)} ( \sigma\Psi_n (\tfrac
v{\sigma}) + \sigma\Psi_n^*(\xi))$.
\end{enumerate}
The following closedness property may be easily derived from 
\eqref{def-p-gliminf}.
\begin{lemma}
\label{l:closedness-contact-sets}
 Assume Hypothesis \ref{hyp:p-lim}. Then,
\begin{equation}
\label{closure-ctc}
\left\{
\begin{array}{ll}
  (\tau_n,v_n,\xi_n) \in \Ctc{\bipname{\Psi_n}},
\\
  (\tau_n,v_n,\xi_n) \to (\tau,v,\xi)
\end{array}
\right.
\quad \Rightarrow \quad (\tau,v,\xi) \in \Ctc{\bptname}.
\end{equation}
\end{lemma}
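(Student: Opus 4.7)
The plan is to chain two simple observations: the $\Gamma$-liminf definition of $\bptname$ forces an upper bound, while Lemma \ref{l:3}(1) supplies the matching lower bound.

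First, since $(\tau_n, v_n, \xi_n) \to (\tau, v, \xi)$, the definition \eqref{def-p-gliminf} immediately yields
\[
\bpt{\tau}{v}{\xi} \;\leq\; \liminf_{n\to\infty} \bip{\Psi_n}{\tau_n}{v_n}{\xi_n}.
\]
Because $(\tau_n, v_n, \xi_n) \in \Ctc{\bipname{\Psi_n}}$, the right-hand side equals $\liminf_{n\to\infty} \langle v_n, \xi_n\rangle$, and by the joint continuity of the Euclidean pairing this limit is precisely $\langle v, \xi\rangle$. Hence $\bpt{\tau}{v}{\xi} \le \langle v,\xi\rangle$.

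On the other hand, Lemma \ref{l:3}(1) gives the universal inequality $\bpt{\tau}{v}{\xi} \ge \langle v,\xi\rangle$. Combining the two estimates one gets $\bpt{\tau}{v}{\xi} = \langle v,\xi\rangle$, which is exactly the membership $(\tau, v, \xi) \in \Ctc{\bptname}$ from \eqref{def-contact-set}.

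There is no real obstacle here: the argument is a one-line sandwich once Lemma \ref{l:3}(1) and Hypothesis \ref{hyp:p-lim} are in hand; in particular, the $\Gamma$-$\limsup$ part of Hypothesis \ref{hyp:p-lim} is not even needed for this closedness property, only the $\Gamma$-$\liminf$ definition \eqref{def-p-gliminf}. This is why the lemma is stated as following ``easily'' from that definition.
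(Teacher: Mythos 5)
Your proof is correct and is exactly the argument the paper alludes to when it says the closedness ``may be easily derived from \eqref{def-p-gliminf}'': the $\Gamma$-$\liminf$ definition gives $\bpt{\tau}{v}{\xi}\le \liminf_n \bip{\Psi_n}{\tau_n}{v_n}{\xi_n}=\langle v,\xi\rangle$, and the reverse inequality is Lemma~\ref{l:3}(1), whose proof indeed only uses \eqref{def-p-gliminf}. Your observation that the $\Gamma$-$\limsup$ condition \eqref{hyp-p-limsup} is not needed is accurate and consistent with the paper's phrasing.
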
 
\par
We are now in a position to carry out the
\underline{\textbf{proof of Theorem \ref{thm:generation}}} by verifying that $\bptname$ complies with properties   (1)--(5) from Definition \ref{def:visc-cont-pot}.  
\par
Properties   (1)\&(2) are guaranteed by Lemma \ref{l:3}, whereas (3) ensues from \eqref{structure-1-below}  in Lemma \ref{l:repr-psiri}. 
 Concerning property   (5),   
observe that \eqref{psiri-non-deg} 
ensues from  \eqref{structure-1-below} for $\tau>0$. For $\tau=0$,  it directly follows from \eqref{1st-repr-psiri} in the case $\psiri^*(\xi)<+\infty$, whereas for $\psiri^*(\xi)=+\infty$ we use
the monotonicity property
\eqref{monotoniticity-p}, giving
\[
(\bpt 1v{\xi} - \bpt 1v0) (\bpt 0v\xi - \bpt 0v0) \geq 0.
\]
Now,  $ \bpt 1v{\xi}  = \psiri(v) + \psiri^*(\xi) = +\infty$, hence we deduce that $ \bpt 0v\xi  \geq  \bpt 0v0 \geq \psiri(v)$ (here we have used that $\psiri^*(0) =0$).
\par   Under the additional \eqref{4-non-degeneracy},  it is immediate to check that $\psiri$ given by \eqref{psi-candidate} is non-degenerate,  i.e.\ property (4). 
This concludes the proof of Thm.\ \ref{thm:generation}. 
\QED
\section{\bf Main results}
\label{s:main} Let us consider a sequence $(\Psi_n)_n$ of
dissipation potentials on $\R^d$ with \emph{superlinear growth at
infinity}, namely   fulfilling \eqref{superlinear-infty} for every $n\in \N$. 
 It follows from
  \cite{mrs2013}, extending 
  the classical
results by \textsc{Colli\&Visintin} (cf.\
\cite{ColliVisintin90, Colli92}) that for every $n\in \N$ there
exists at least a solution $u \in \AC ([0,T];\R^d)$ of the Cauchy
problem for the generalized gradient system $(\Psi_n,\calE)$, with
$\calE$ complying with \eqref{smooth-energy}.  Namely,  $u$ solves the doubly nonlinear differential inclusion 
\begin{equation}
\label{CP-gen-gra}
\begin{cases}
\partial\Psi_n (\dot{u}(t)) + \rmD \ene t{u(t)} \ni 0 \qquad \foraa\, t\in (0,T),
\\
u(0) = u_0
\end{cases}
\end{equation}
for a given datum $u_0\in \R^d$. Furthermore, again an argument
(also often referred to as \emph{De Giorgi principle}, see
\cite{Mie-evolGamma}) based on the chain rule, 
 cf.\  \cite{mrs2013},  shows that a curve $u
\in \AC ([0,T];\R^d)$ is a solution of the gradient system
$(\Psi_n,\calE)$ if and only if it is a null-minimizer for the
(positive) functional
 of trajectories 
 $\mathcal{J}_{\Psi_n,\calE} : \AC ([0,T];\R^d)
\to [0,+\infty)$ defined by
\begin{equation}
\label{null-minim-gflow}
 \mathcal{J}_{\Psi_n,\calE} (u): = \int_0^T \left( \Psi_n (\dot u(s)) +  \Psi_n^* (-\rmD \ene s{u(s)})\right)   \dd s
+ \ene T{u_n(T)} - \ene 0{u_n(0)} - \int_0^T \partial_t \ene s{u_n(s)} \dd s.
\end{equation}
 Observe that  the positivity of 
$\mathcal{J}_{\Psi_n,\calE}  $ follows from 
\[
\begin{aligned}
\int_0^T \left( \Psi_n (\dot u(s)) +  \Psi_n^* (-\rmD \ene s{u(s)})\right) \dd s 
& \geq  -  \int_0^T  \langle \rmD \ene s{u(s)}, \dot u(s) \rangle \dd s 
\\
&
 = \ene 0{u_n(0)} - \ene T{u_n(T)} + \int_0^T \partial_t \ene s{u_n(s)} \dd s,
 \end{aligned}
\]
the last equality 
by the chain rule. \EEE 

The \textbf{main results of this paper, Theorems 
\ref{thm:liminf} and \ref{thm:limsupgenerale}} ahead, 
   concern the \textsc{Mosco}-convergence
to the functional $\Jfuname{\psiri,\bptname,\calE}$
 from \eqref{J-fun_RIS}, 
 with respect to the weak-strict topology of $\BV ([0,T];\R^d)$, of a family of functionals
  $(\Jfuname{\Psi_n,\calE})_n$ 
  suitably extending  $ \mathcal{J}_{\Psi_n,\calE} $ to  $ \BV ([0,T];\R^d)$.
Namely, we define
\begin{equation}
\label{extension-to-BV}
\Jfuname{\Psi_n,\calE} : \BV ([0,T];\R^d) \to [0,+\infty]  \quad \text{ by } \qquad
\Jfu{\Psi_n,\calE}{u} : = \begin{cases}
 \mathcal{J}_{\Psi_n,\calE} (u) & \text{ if } u \in \AC ([0,T];\R^d),
 \\
 +\infty & \text{ otherwise}.
\end{cases}
\end{equation}
\subsection{The $\Gamma$-liminf result}
\label{ss:5.1}
 First of all, let us fix the compactness properties of a sequence $(u_n)_n \subset \BV ([0,T];\R^d)$ with $\sup_{n} \Jfu{\Psi_n,\calE}{u_n} \leq C$,  assuming that the potentials  $\Psi_n$
comply with a suitable coercivity property.
\begin{proposition}
\label{prop:compactness}
Let $(\Psi_n)_n$ be a family of dissipation potentials with superlinear growth at
infinity and assume that
\begin{align}
\label{h:2}
\exists\, M_1,\, M_2>0 \ \ \forall\, n \in \N \ \ \forall\, \bv \in \R^d \, : \ \ \Psi_n(\bv) \geq M_1 \|\bv \|_1 -M_2.
\end{align}
Let $(u_n)_n \subset  \BV ([0,T];\R^d)$ fulfill
$ \|u_n(0)\| + \Jfu{\Psi_n,\calE}{u_n}  \leq C$ for some constant $C>0$  uniform w.r.t.\ $n\in \N$. Then,
there exist a subsequence $k\mapsto n_k$ and a curve $u$ such that $u_{n_k} \weakto u$ in $\BV ([0,T];\R^d)$.
\end{proposition}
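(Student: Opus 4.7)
The plan is to extract from the bound $\Jfu{\Psi_n,\calE}{u_n}\le C$ a uniform bound on the total variation of $(u_n)_n$ in $\BV([0,T];\R^d)$, and then invoke a Helly-type selection argument. Note first that since $\Jfu{\Psi_n,\calE}{u_n}<+\infty$, by \eqref{extension-to-BV} each $u_n$ belongs to $\AC([0,T];\R^d)$. Using the chain rule for $\calE\in\rmC^1$ along absolutely continuous curves, we can rewrite
\begin{equation*}
\Jfu{\Psi_n,\calE}{u_n} \;=\; \int_0^T\!\Big(\Psi_n(\dot u_n(s)) + \Psi_n^*(-\rmD\ene s{u_n(s)}) + \langle \rmD\ene s{u_n(s)}, \dot u_n(s)\rangle\Big)\,\dd s,
\end{equation*}
where each integrand is non-negative by the Fenchel--Young inequality.

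Next, I would control the energy $s\mapsto\calE(s,u_n(s))$. From the same chain-rule identity applied on $[0,t]$ and from the non-negativity of the integrand above, one gets
\begin{equation*}
\ene{t}{u_n(t)} \;\le\; \ene{0}{u_n(0)} + \int_0^t \partial_t\ene s{u_n(s)}\,\dd s + \Jfu{\Psi_n,\calE}{u_n}.
\end{equation*}
Since $\|u_n(0)\|\le C$ and $\calE\in\rmC^1$, the first term on the right is uniformly bounded; applying \eqref{smooth-energy} to $\partial_t\calE$ and invoking Gronwall's lemma yields a uniform upper bound $\ene t{u_n(t)} \le C'$ for all $t\in[0,T]$ and all $n$. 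The same structural assumption \eqref{smooth-energy} also forces $\calE \ge -C_2/C_1$, so $\calE$ is uniformly bounded from below as well.

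With the energy under control, I would then bound the dissipation. Rearranging the chain-rule identity and using $\Psi_n^*\ge 0$ together with the upper and lower bounds on $\calE$, one obtains
\begin{equation*}
\int_0^T \Psi_n(\dot u_n(s))\,\dd s \;\le\; \Jfu{\Psi_n,\calE}{u_n} + \ene{0}{u_n(0)} - \ene{T}{u_n(T)} + \int_0^T \partial_t\ene s{u_n(s)}\,\dd s \;\le\; C''.
\end{equation*}
The uniform coercivity assumption \eqref{h:2} then gives
\begin{equation*}
M_1\, \Varname{\|\cdot\|_1}(u_n;[0,T]) \;=\; M_1\int_0^T\!\|\dot u_n(s)\|_1\,\dd s \;\le\; \int_0^T\Psi_n(\dot u_n)\,\dd s + M_2 T \;\le\; C'''.
\end{equation*}
Combined with $\|u_n(0)\|\le C$, this gives a uniform $L^\infty$-bound and a uniform bound on the total variation, hence by Helly's selection theorem a not-relabeled subsequence $u_{n_k}$ converges pointwise on $[0,T]$ to some $u\in\BV([0,T];\R^d)$; by lower semicontinuity of total variation, $\sup_k \Varname{}(u_{n_k};[0,T])<\infty$, which is precisely the weak convergence $u_{n_k}\weakto u$ used in the paper.

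The main obstacle I expect is the correct ordering of the estimates: one has to bound $\calE$ before the dissipation integral, and the Gronwall step relies crucially on the fact that the three-term integrand in the $\Jfuname{\Psi_n,\calE}$ identity is non-negative on every subinterval $[0,t]$, not only on $[0,T]$. The rest (Helly's theorem, extraction of a subsequence) is standard.
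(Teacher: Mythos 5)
Your proof is correct and follows essentially the same route as the paper: you use the chain rule and Fenchel--Young non-negativity to bound the energy $\ene t{u_n(t)}$ via Gronwall (exploiting the power control in \eqref{smooth-energy} and the initial-datum bound), then the coercivity \eqref{h:2} to get a uniform total-variation bound, and conclude by Helly. The only (harmless) slip is the final sentence: the uniform bound $\sup_k \Var{}{u_{n_k}}0T<\infty$ is already in hand from the coercivity estimate, so invoking lower semicontinuity of the total variation there is unnecessary.
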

We are now in a position to state the
$\gl \liminf$ result for the sequence $(\Jfuname{\Psi_n,\calE})_n$.
 Its proof is postponed to Section \ref{s:proofs}. 
\begin{theorem}
\label{thm:liminf} Let $(\Psi_n)_n$ be a family of dissipation
potentials with superlinear growth at infinity  such that the
associated bipotentials $(\bipname{\Psi_n})_n$ comply with
Hypothesis \ref{hyp:p-lim}, with limiting viscosity contact
potential   $\bptname$. Let $\psiri$ be the $1$-positively
homogeneous dissipation potential  defined by $\psiri (v) :=  \bpt
1v0$, \RRR and suppose that $\psiri$ is non-degenerate. \EEE

Then, for every  $(u_n)_n,\, u \in \BV ([0,T];\R^d)$ we have that 
\begin{equation}
\label{liminf-ineq}
u_n \weakto u \text{ in } \BV ([0,T];\R^d) \ \ \Rightarrow \ \ \liminf_{n\to \infty} \Jfu{\Psi_n, \calE}{u_n} \geq \Jfu{\psiri,\bptname,\calE}{u}.
\end{equation}
More precisely, we have as $n\to\infty$
\begin{align}
&
\label{energies+power}
\ene {t}{u_n(t)}\to \ene{t}{u(t)} \text{ and } \int_0^t \partial_t \ene r{u_n(r)}\dd r \to \int_0^t \partial_t  \ene r{u(r)} \dd r \quad \text{for every } t \in [0,T],
\\
&
\label{liminf_better}
\liminf_{n\to\infty} \int_s^t \left( \Psi_n(\dot{u}_n(r)) + \Psi_n^* (-\rmD \ene r{u_n(r)})\right) \dd r   \geq   \Var{\psiri,\bptname,\calE}{u}{s}t +  \int_s^t  \psiri^* (-\rmD \ene t{u(r)}) \dd r
\end{align}
 for every $0\leq s\leq t \leq T$. 
\end{theorem}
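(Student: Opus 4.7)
The convergence \eqref{energies+power} is the easier half. Weak $\BV$-convergence gives $u_n(t)\to u(t)$ pointwise on $[0,T]$, so continuity of $\calE$ yields $\ene{t}{u_n(t)}\to \ene{t}{u(t)}$. The uniform bound on $\|u_n(0)\|$, the coercivity \eqref{h:2} of Proposition \ref{prop:compactness}, and a Gronwall argument based on \eqref{smooth-energy} combined with the identity hidden in $\Jfu{\Psi_n,\calE}{u_n}\le C$ give a uniform $L^\infty$-bound on $\ene{t}{u_n(t)}$, hence on $|\partial_t\ene{t}{u_n(t)}|\le C_1\ene{t}{u_n(t)}+C_2$; dominated convergence closes the power convergence. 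Once \eqref{liminf_better} is established, adding it to \eqref{energies+power} and recognizing the definition \eqref{J-fun_RIS} of $\Jfuname{\psiri,\bptname,\calE}$ produces \eqref{liminf-ineq}.

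For \eqref{liminf_better}, I would use a reparametrization-deparametrization argument in the spirit of \cite{MRS10,mielke-rossi-savare2013}. Introduce the strictly increasing arc-length-like functions
\[
s_n(t) := t + \int_0^t \bigl[\Psi_n(\dot u_n(r)) + \Psi_n^*(-\rmD\ene{r}{u_n(r)})\bigr]\,\mathrm dr,
\]
with $S_n := s_n(T)$ uniformly bounded by hypothesis. Set $\hat t_n := s_n^{-1}\colon [0,S_n]\to[0,T]$ and $\hat u_n := u_n\circ\hat t_n$; both are $1$-Lipschitz, so by Ascoli--Arzel\`a and a diagonal extension there is a subsequence with $(\hat t_n,\hat u_n)\to(\hat t,\hat u)$ uniformly and weakly-$*$ in $W^{1,\infty}$, with $\hat t$ nondecreasing. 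A standard change of variables rewrites
\[
\int_s^t \bigl[\Psi_n(\dot u_n(r)) + \Psi_n^*(-\rmD\ene{r}{u_n(r)})\bigr]\,\mathrm dr = \int_{s_n(s)}^{s_n(t)} \bip{\Psi_n}{\hat t_n'(\sigma)}{\hat u_n'(\sigma)}{-\rmD\calE(\hat t_n(\sigma),\hat u_n(\sigma))}\,\mathrm d\sigma.
\]
The $\Gamma$-$\liminf$ half of Hypothesis \ref{hyp:p-lim} applied pointwise in $\sigma$ together with an Ioffe-type lower semicontinuity result for integrands jointly convex in $(\tau,v)$ and lower semicontinuous in $\xi$ then yields
\[
\liminf_{n\to\infty} \int_{s_n(s)}^{s_n(t)} \bip{\Psi_n}{\hat t_n'}{\hat u_n'}{-\rmD\calE(\hat t_n,\hat u_n)}\,\mathrm d\sigma \;\ge\; \int_{\hat s(s)}^{\hat s(t)} \bpt{\hat t'(\sigma)}{\hat u'(\sigma)}{-\rmD\calE(\hat t(\sigma),\hat u(\sigma))}\,\mathrm d\sigma.
\]

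The last step is to deparametrize the right-hand side by decomposing $[\hat s(s),\hat s(t)]$ into the sliding set $\{\hat t'>0\}$ and the jump set $\{\hat t'=0\}$. On $\{\hat t'>0\}$, the explicit formula \eqref{structure-1-below} gives $\bpt{\hat t'}{\hat u'}{\xi} = \hat t'\psiri(\hat u'/\hat t') + \hat t'\psiri^*(\xi)$; pushing back through the change of variables and invoking the Lebesgue decomposition of $u'$ recovers $\int_s^t\psiri(\dot u(r))\,\mathrm dr + \mu_{\psiri,\Ca}([s,t]) + \int_s^t\psiri^*(-\rmD\ene{r}{u(r)})\,\mathrm dr$. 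Each maximal subinterval of $\{\hat t'=0\}$ collapses to a single $t^*\in\ju u$ along which $\hat u$ is an absolutely continuous curve through $u(t^*_-)$, $u(t^*)$, $u(t^*_+)$; by the very definition \eqref{Finsler-cost} of the Finsler cost the contribution is at least $\Cost{\bptname}{\calE}{t^*}{u(t^*_-)}{u(t^*)} + \Cost{\bptname}{\calE}{t^*}{u(t^*)}{u(t^*_+)}$, and summing over the (at most countable) $\ju u\cap [s,t]$ reconstructs $\Jump{\bptname,\calE}{u}{s}{t}$. Putting the three contributions together yields $\Var{\psiri,\bptname,\calE}{u}{s}{t}+\int_s^t\psiri^*(-\rmD\ene{r}{u(r)})\,\mathrm dr$ on the right.

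The principal obstacle is the accounting of the Cantor part $\mu_{\psiri,\Ca}$ and the identification of the jump contribution with the Finsler cost: this demands a careful Ioffe-type lower semicontinuity argument for the convex $1$-homogeneous integrand $\bptname$ along the reparametrized trajectory, together with the non-degeneracy of $\psiri$ provided by Theorem \ref{thm:generation} (which prevents $\hat u'$ from carrying Cantor mass unseen by the limiting functional). The remaining pieces---pointwise $\Gamma$-$\liminf$ passage, change of variables, and extraction of the Lipschitz limit---are by now standard under the hypotheses assumed.
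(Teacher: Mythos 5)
Your plan is a \emph{global} reparametrization argument in the spirit of \cite{MRS09,mielke-rossi-savare2013}: reparametrize once by the $\Psi_n$-energy, apply Ioffe once, then deparametrize. The paper's proof of \eqref{liminf_better} takes a measure-theoretic route instead. It introduces the measures $\nu_n = \Psi_n(\dot u_n)\Leb^1 + \Psi_n^*(-\rmD\calE(\cdot,u_n))\Leb^1$, passes to a weak$^*$ limit $\nu$, bounds the diffuse part directly by $\nu_{\mathrm{co}} \ge \psiri(\dot u)\Leb^1+\mu_{\psiri,\Ca}$ via a Jensen inequality over arbitrary partitions combined with the $\Gamma$-convergence $\Psi_n\to\psiri$ (no Ioffe needed there), and only then applies Lemma~\ref{lsci-lemma} \emph{locally} at each jump $t$ on a normalized interval $[h_n^-,h_n^+]\to[0,1]$ to obtain $\nu(\{t\})\ge\Cost{\bptname}{\calE}{t}{u(t_-)}{u(t_+)}$. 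This keeps the identification of what the limiting objects are entirely local and robust.

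Your route is viable, but as written it has a genuine gap in the deparametrization step. You claim that $\{\hat t' > 0\}$ recovers the full diffuse contribution $\int_s^t\psiri(\dot u)\,\dd r + \mu_{\psiri,\Ca}([s,t]) + \int_s^t\psiri^*(-\rmD\ene r{u(r)})\dd r$; this is wrong. On $\{\hat t'>0\}$ the integrand $\hat t'\,\psiri(\hat u'/\hat t') = \psiri(\hat u')$ pushes forward only to the absolutely continuous part $\psiri(\dot u)\Leb^1$. The Cantor part $\mu_{\psiri,\Ca}$ lives in the preimage under $\hat t$ of a Lebesgue-null set, i.e.\ in $\{\hat t'=0\}$ but at \emph{non-jump} values of $\hat t$ (where $\hat u'$ carries mass, $-\rmD\calE(\hat t,\hat u)\in K^*$ by local stability, and $\bpt 0{\hat u'}{\cdot}=\psiri(\hat u')$). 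Correspondingly, your statement that ``each maximal subinterval of $\{\hat t'=0\}$ collapses to a single $t^*\in\ju u$'' is false: the Cantor set produces positive-measure pieces of $\{\hat t'=0\}$ that map onto perfect nowhere-dense sets, not single points. Separating these from the genuine jump-intervals is precisely the delicate part of the deparametrization, and the argument as stated conflates them. A second, smaller issue: the weak-$*$ $W^{1,\infty}$ limit $(\hat t,\hat u)$ of $(\hat t_n,\hat u_n)$ need not \emph{a priori} be a reparametrization of $u$ in the sense needed to read off $u(t^*_-)$, $u(t^*)$, $u(t^*_+)$ from it (cf.\ \cite[Prop.~6.9]{MRS10}); this identification requires an argument, and the paper sidesteps it by working only locally and pinning down $\teta_n(0)\to u(t_-)$, $\teta_n(1)\to u(t_+)$ by construction.
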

\noindent
\RRR Note that a sufficient condition for $\psiri$ to be non-degenerate
is that the potentials $\Psi_n$ comply with \eqref{4-non-degeneracy},
 cf.\ Theorem \ref{thm:generation}. 
A straightforward consequence  of Thm.\ \ref{thm:liminf} 
 is the following result. 
 \begin{corollary}
 \label{cor:liminf}
 Under the assumptions of Theorem \ref{thm:liminf}, let $(u_n)_n \subset \AC ([0,T];\R^d)$  \RRR fulfill $\Jfu{\Psi_n}{u_n} \leq \eps_n $ 
 for every $n\in \N$, 
 for some vanishing sequence $(\eps_n)_n$.  \EEE

 Then, any limit point $u$ of $(u_n)_n$ with respect to the weak-$\BV([0,T];\R^d)$-topology is a Balanced Viscosity solution to the rate-independent
 system $\RIS$,  and, up to a subsequence,  convergences \eqref{energies+power} and
  \begin{equation}
  \label{lim-better}
  \lim_{n\to\infty} \int_s^t \left( \Psi_n(\dot{u}_n(r)) + \Psi_n^* (-\rmD \ene r{u_n(r)})\right) \dd r   =   \Var{\psiri,\bptname,\calE}{u}{s}t +  \int_s^t  \psiri^* (-\rmD \ene t{u(r)}) \dd r
  \end{equation}
 hold  for all $0\leq s \leq t \leq T$.
 \end{corollary}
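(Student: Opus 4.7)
\textbf{Proof plan for Corollary \ref{cor:liminf}.} The strategy is to combine the $\Gamma$-$\liminf$ inequality of Theorem \ref{thm:liminf} with the null-minimization characterization of Balanced Viscosity solutions (Proposition \ref{prop:null-minim}), and then to upgrade the $\liminf$ to a limit on each subinterval by exploiting the additivity of the trajectory functional together with the pointwise nonnegativity of its density.

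\emph{Step 1: $u$ is a $\BV$ solution.} Pick a (non-relabeled) subsequence along which $u_{n}\weakto u$ weakly in $\BV([0,T];\R^d)$. By Theorem \ref{thm:liminf},
\[
\Jfu{\psiri,\bptname,\calE}{u}\le\liminf_{n\to\infty}\Jfu{\Psi_n,\calE}{u_n}\le\liminf_{n\to\infty}\eps_n=0.
\]
As noted in the proof of Proposition \ref{prop:null-minim}, one always has $\Jfu{\psiri,\bptname,\calE}{v}\ge 0$ for every $v\in\BV([0,T];\R^d)$, so the preceding chain forces $\Jfu{\psiri,\bptname,\calE}{u}=0$. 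Proposition \ref{prop:null-minim} then gives that $u$ is a Balanced Viscosity solution of $\RIS$, and \eqref{energies+power} is already part of the statement of Theorem \ref{thm:liminf}.

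\emph{Step 2: Upgrade $\liminf$ to limit on every subinterval.} For any $0\le a\le b\le T$ introduce the localized functional
\[
\mathcal{J}^{[a,b]}_{\Psi_n,\calE}(v):=\int_a^b\bigl(\Psi_n(\dot v(r))+\Psi_n^*(-\rmD\ene r{v(r)})\bigr)\dd r+\ene b{v(b)}-\ene a{v(a)}-\int_a^b\partial_t\ene r{v(r)}\dd r,
\]
and its limiting counterpart $\mathcal{J}^{[a,b]}_{\psiri,\bptname,\calE}$ defined analogously from \eqref{J-fun_RIS} restricted to $[a,b]$. By the same chain-rule argument used to prove positivity of $\Jfuname{\Psi_n,\calE}$, both $\mathcal{J}^{[a,b]}_{\Psi_n,\calE}(u_n)\ge 0$ and $\mathcal{J}^{[a,b]}_{\psiri,\bptname,\calE}(u)\ge 0$, and both functionals are additive over consecutive intervals. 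Applied with the three subintervals $[0,s]$, $[s,t]$, $[t,T]$, this additivity yields
\[
\Jfu{\Psi_n,\calE}{u_n}=\sum_{[a,b]}\mathcal{J}^{[a,b]}_{\Psi_n,\calE}(u_n),\qquad \Jfu{\psiri,\bptname,\calE}{u}=\sum_{[a,b]}\mathcal{J}^{[a,b]}_{\psiri,\bptname,\calE}(u).
\]
Inequality \eqref{liminf_better} of Theorem \ref{thm:liminf}, combined with \eqref{energies+power} applied at the endpoints, gives $\liminf_{n\to\infty}\mathcal{J}^{[a,b]}_{\Psi_n,\calE}(u_n)\ge\mathcal{J}^{[a,b]}_{\psiri,\bptname,\calE}(u)$ for each of the three intervals. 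Since each $\mathcal{J}^{[a,b]}_{\Psi_n,\calE}(u_n)\ge 0$ and their sum converges to $\Jfu{\psiri,\bptname,\calE}{u}=\sum_{[a,b]}\mathcal{J}^{[a,b]}_{\psiri,\bptname,\calE}(u)$, equality must hold in the $\liminf$ for every single subinterval; in particular $\lim_{n\to\infty}\mathcal{J}^{[s,t]}_{\Psi_n,\calE}(u_n)=\mathcal{J}^{[s,t]}_{\psiri,\bptname,\calE}(u)$.

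\emph{Step 3: Extract \eqref{lim-better}.} Subtracting the boundary and power contributions, which converge thanks to \eqref{energies+power}, yields
\[
\lim_{n\to\infty}\int_s^t\bigl(\Psi_n(\dot u_n(r))+\Psi_n^*(-\rmD\ene r{u_n(r)})\bigr)\dd r=\Var{\psiri,\bptname,\calE}{u}{s}{t}+\int_s^t\psiri^*(-\rmD\ene r{u(r)})\dd r,
\]
which is precisely \eqref{lim-better}. The only mildly delicate point is the additivity/positivity of $\mathcal{J}^{[a,b]}_{\psiri,\bptname,\calE}$ for the limit functional: for the dissipation part this follows from the construction \eqref{pseudo-Var} of $\mathrm{Var}_{\psiri,\bptname,\calE}$ as the sum of a diffuse and a pure-jump component, both additive in the interval; positivity then follows as in the proof of Proposition \ref{prop:null-minim}. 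No new convergence tool beyond Theorem \ref{thm:liminf} is needed.
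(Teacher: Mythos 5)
Your proof is correct and follows essentially the same approach as the paper's: both combine the $\liminf$-estimate of Theorem \ref{thm:liminf} and the null-minimization characterization of Proposition \ref{prop:null-minim} with a matching $\limsup$-estimate obtained from $\Jfu{\Psi_n,\calE}{u_n}\le\eps_n$ via the nonnegativity of the localized trajectory functionals on the complementary subintervals $[0,s]$ and $[t,T]$. The paper simply folds your Step 2 into a compact three-line $\limsup$ chain for $\int_s^t(\Psi_n+\Psi_n^*)\,\dd r$, where the additivity/positivity reasoning you spell out explicitly is implicit in its first inequality.
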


\subsection{Examples}
\label{s:example}
\RRR We now focus  on two classes of dissipations potentials $(\Psi_n)_n$, with superlinear growth at infinity, approximating  
 a $1$-positively homogeneous dissipation potential $\psiri$.  In the first case, the dissipation potentials $\Psi_n$ are obtained by rescaling from a given dissipation potential $\Psi$
with superlinear growth at infinity, and suitably converge to some $\psiri$. In the  second case, we consider the stochastic model introduced in Section 
\ref{s:2} and the associated potentials $\Psi_n$ given  by \eqref{def:cosh_psi}:
the limiting potential is $\psiri(v) = A \|v\|_1$.   We will show that, in both cases Hypothesis \ref{hyp:p-lim} is fulfilled. 
\subsubsection*{\bf The vanishing-viscosity approximation}
We consider the dissipation potentials  
\begin{subequations}
\label{van-visc-case}
\begin{equation}
\label{van-visc-case-2}
\Psi_n(v) = \Psi_{\eps_n}(v) = \frac1{\eps_n} \Psi(\eps_n v) \quad \text{for all } v \in \R^d, \text{ with $\eps_n\down 0 $},
\end{equation}
with $\Psi :\R^d \to [0,+\infty) $ a fixed potential with superlinear growth at infinity.   \EEE
We suppose that  there exists a $1$-homogeneous dissipation potential $\psiri$ such that
\begin{equation}
\label{van-visc-case-3}
\psiri(v) = \lim_{n\to\infty}\Psi_n(v) = \lim_{n\to\infty} \frac1{\eps_n} \Psi(\eps_n v)\quad \text{for all } v \in \R^d.
\end{equation}
\end{subequations}
\EEE
\begin{example}
\label{ex:van-visc}
\upshape
In particular, we focus on these two cases (cf.\ \cite[Ex.\ 2.3]{MRS10}):
\begin{enumerate}
\item \textbf{$\psiri$-viscosity:} the superlinear dissipation potential $\Psi$ is obtained augmenting $\psiri$
by a superlinear function of $\psiri$ itself. Namely,
 given   a convex superlinear function  $F_V: [0,+\infty) \to [0,+\infty)$, we set
\begin{equation}
\label{form-psi-visc-single}
\Psi(v) : = \psiri(v) + F_V(\psiri(v)), \quad \text{whence } \Psi_n(v) = \psiri(v) + \frac1{\eps_n} F_V(\eps_n \psiri(v)) \quad \text{for all } v \in \R^d.
\end{equation}
To fix ideas, we may think of $\psiri(v) = A \| v\|_1$ and $F_V(\rho) = \frac12\rho^2$, giving rise to
\begin{equation}
\label{specific-psi-visc}
\Psi_n(v) = A \| v\|_1 +  \frac{\eps_n}{2} A^2  \| v\|_1^2.
\end{equation}
\item \textbf{$2$-norm vanishing-viscosity:}  Let us now consider a norm $\|\cdot\|$ on $\R^d$, 
 different from that associated with $\psiri$. 
We set
\begin{equation}
\label{form-psi-visc} \Psi(v) : = \psiri(v) + F_V( \|v\|), \quad
\text{whence } \Psi_n(v) = \psiri(v) + \frac1{\eps_n}  F_V(\eps_n
\|v\|)\quad \text{for all } v \in \R^d,
\end{equation}
with again $F_V: [0,+\infty) \to [0,+\infty)$ convex and
superlinear.  In this way we generate, for example, the dissipation
potentials
\begin{equation}
\label{specific-psi-visc-2} \Psi_n(v) = A \| v\|_1 + \frac{\eps_n}{2}
\| v\|_2^2,
\end{equation}
with  $ \| \bv \|_2 : = \left(
\sum_{i=1}^d |v_i|^2\right)^{1/2}$. 
\end{enumerate}
\end{example}
This family of  dissipation potentials
 comply with the hypotheses of Thm.\ \ref{thm:liminf}.
\begin{proposition}
\label{l:van-visc-ok-hyp}
The dissipation potentials  from \eqref{van-visc-case} comply with
\RRR \eqref{4-non-degeneracy} \EEE and with 
 Hypothesis \ref{hyp:p-lim}, where
\begin{equation}
\label{p-lim-van-visc}
\bptname: [0,+\infty) \times \R^d \times \R^d \to [0,+\infty] \qquad \text{is given by }
\bpt\tau v\xi:=
\begin{cases}
\psiri(v) + I_{K^*}(\xi) & \text{if } \tau>0,
\\
\inf_{\eps_n>0}\left(\Psi_{\eps_n}(v) + \Psi_{\eps_n}^*(\xi)  \right) & \text{if } \tau=0.
\end{cases}
\end{equation}
\end{proposition}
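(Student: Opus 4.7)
The plan is to verify the non-degeneracy \eqref{4-non-degeneracy} and then Hypothesis \ref{hyp:p-lim}, obtaining the explicit formula \eqref{p-lim-van-visc} for $\bptname$ along the way. For the non-degeneracy, convexity of $\Psi$ together with $\Psi(0)=0$ implies that $s\mapsto\Psi(sv)/s$ is non-decreasing on $(0,+\infty)$ for every $v\in\R^d$. Consequently $\Psi_n(v) = \Psi(\eps_n v)/\eps_n \geq \lim_{s\downarrow 0}\Psi(sv)/s = \psiri(v)$, and the non-degeneracy of $\psiri$ combined with the norm equivalence \eqref{equivalent} yields $\Psi_n(v) \geq \eta^{-1}\|v\|$, so \eqref{4-non-degeneracy} holds with $M_n \equiv 0$.

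Next, a direct change of variable gives $\Psi_n^*(\xi) = \eps_n^{-1}\Psi^*(\xi)$, and since $\psiri = \Psi'(0;\cdot)$ is the support function of $\partial\Psi(0)$, we deduce $\partial\Psi(0) = K^*$ and hence $\{\Psi^* = 0\} = K^*$. The key tool is the rescaling identity
\begin{equation*}
\bip{\Psi_n}{\tau}{v}{\xi} = \Psi_{\eps_n/\tau}(v) + (\tau/\eps_n)\,\Psi^*(\xi) \qquad (\tau > 0).
\end{equation*}
For $\tau > 0$ and $(\tau_n, v_n, \xi_n) \to (\tau, v, \xi)$, since $\eps_n/\tau_n \to 0$ one has $\Psi_{\eps_n/\tau_n}(v_n) \to \psiri(v)$ while the conjugate term is nonnegative and blows up whenever $\xi \notin K^*$ (by lower semicontinuity of $\Psi^*$); taking $(\tau_n, v_n) \equiv (\tau, v)$ as a recovery sequence shows that the liminf equals $\psiri(v) + I_{K^*}(\xi)$, which agrees with the general structural formula from Lemma \ref{l:repr-psiri}. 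For $\tau = 0$ and $v \neq 0$, along any approaching sequence I set $s_n := \eps_n/\tau_n$ so that $\bip{\Psi_n}{\tau_n}{v_n}{\xi_n} = \Psi_{s_n}(v_n) + \Psi_{s_n}^*(\xi_n)$. Extracting a subsequence with $s_n \to s^* \in [0,+\infty]$, superlinearity of $\Psi$ rules out $s^* = +\infty$; for $s^* \in (0,\infty)$ the sum converges to $\Psi_{s^*}(v) + \Psi_{s^*}^*(\xi) \geq \inf_{\eps > 0}(\Psi_\eps(v) + \Psi_\eps^*(\xi))$; and for $s^* = 0$ one combines $\Psi_{s_n}(v_n)\to\psiri(v)$ with l.s.c.\ of $\Psi^*$ to reach the same lower bound.

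To establish the strong recovery condition \eqref{hyp-p-limsup}, I fix $\xi$ and take $\xi_n \equiv \xi$. For $\tau > 0$ the choice $(\tau_n, v_n) \equiv (\tau, v)$ is optimal as above. For $\tau = 0$ and $v \neq 0$ the scaling yields
\begin{equation*}
\bip{\Psi_n}{\sigma\eps_n}{v}{\xi} = \sigma\Psi(v/\sigma) + \sigma\Psi^*(\xi) = \Psi_{1/\sigma}(v) + \Psi_{1/\sigma}^*(\xi) \qquad \text{for every } \sigma > 0,
\end{equation*}
a value independent of $n$. Selecting a near-minimizing sequence $\sigma_k$ for $\sigma \mapsto \Psi_{1/\sigma}(v) + \Psi_{1/\sigma}^*(\xi)$, whose infimum coincides with $\inf_{\eps > 0}(\Psi_\eps(v) + \Psi_\eps^*(\xi))$, and performing a standard diagonal extraction produces $\tau_n = \sigma_{k(n)}\eps_n \to 0$ with $\limsup\bip{\Psi_n}{\tau_n}{v}{\xi} \leq \bpt 0 v \xi$.

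The delicate step is the lower bound in the case $\tau = 0$, $v \neq 0$, $s_n \to 0$: the conjugate term $\Psi^*(\xi_n)/s_n$ may blow up at a rate dictated by how $\xi_n$ approaches $K^*$, and one must exploit the monotonicity $\Psi_s(v)\nearrow\psiri(v)$ as $s \downarrow 0$, the monotonicity $\Psi_s^*(\xi)\searrow 0$ as $s\uparrow+\infty$, and the superlinearity of $\Psi$, to ensure that the $\gl\liminf$ still dominates $\inf_{\eps > 0}(\Psi_\eps(v) + \Psi_\eps^*(\xi))$.
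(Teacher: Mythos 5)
Your direct argument is more self-contained than the paper's, which simply defers to \cite[Lemma~6.1]{MRS10}; the scaling identity $\bip{\Psi_n}{\tau}{v}{\xi}=\Psi_{\eps_n/\tau}(v)+\Psi^*_{\eps_n/\tau}(\xi)$ for $\tau>0$ is indeed the right reduction, and the non-degeneracy step, the $\tau>0$ case, and the $\limsup$ recovery $\tau_n=\sigma_{k(n)}\eps_n$ with $\xi_n\equiv\xi$ are all sound.

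Two corrections are needed. First, your monotonicity direction is flipped: convexity of $\Psi$ with $\Psi(0)=0$ gives that $s\mapsto\Psi(sv)/s$ is \emph{non-decreasing}, hence $\Psi_s(v)\searrow\psiri(v)$ as $s\downarrow 0$ (not $\nearrow$), and consequently $\Psi_s(v)$ is non-decreasing in $s$ while $\Psi^*_s(\xi)=\frac1s\Psi^*(\xi)$ is non-increasing. Second, and more importantly, your final paragraph acknowledges that the $\gl\liminf$ estimate for $\tau=0$, $v\neq 0$, $s_n:=\eps_n/\tau_n\to 0$ is left open; as written that is a genuine gap, although a small one. The worry about the rate of blow-up of $\Psi^*(\xi_n)/s_n$ is a red herring, because that rate never enters the target $\bpt 0v\xi=\inf_{\eps>0}(\Psi_\eps(v)+\Psi^*_\eps(\xi))$. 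The step closes by a simple dichotomy: you already have $\Psi_{s_n}(v_n)\geq\psiri(v_n)\to\psiri(v)$ and $\Psi^*_{s_n}(\xi_n)\geq 0$. If $\xi\in K^*$, then $\Psi^*_\eps(\xi)=0$ for every $\eps$ and hence $\bpt 0v\xi=\inf_{\eps>0}\Psi_\eps(v)=\psiri(v)$, so the bound $\liminf\geq\psiri(v)$ already suffices. If $\xi\notin K^*$, then $\Psi^*(\xi)>0$, lower semicontinuity gives $\liminf_n\Psi^*(\xi_n)\geq\Psi^*(\xi)>0$, and since $s_n\to 0$ one gets $\Psi^*_{s_n}(\xi_n)=\Psi^*(\xi_n)/s_n\to+\infty$, so the $\liminf$ is $+\infty\geq\bpt 0v\xi$ trivially. (Equivalently: for every $\eps>0$, once $s_n\leq\eps$ one has $\Psi^*_{s_n}(\xi_n)\geq\Psi^*_\eps(\xi_n)$, whence $\liminf\geq\psiri(v)+\sup_{\eps>0}\Psi^*_\eps(\xi)=\psiri(v)+I_{K^*}(\xi)\geq\bpt 0v\xi$.) With this, the proof is complete.
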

The \emph{proof} can be straightforwardly retrieved from the argument for \cite[Lemma 6.1]{MRS10}.
\begin{example}[Example \ref{ex:van-visc}  continued]
\upshape Following \cite[Rem.\ 3.1]{MRS10}, we explicitly calculate
$\bpt0{v}{\xi}$, 
using formula \eqref{p-lim-van-visc}, in the two cases
of Example \ref{ex:van-visc}:
\begin{enumerate}
\item \textbf{$\psiri$-viscosity:}
We have
\[
\bpt 0v{\xi} : = \begin{cases} \psiri(v) & \text{if } \xi \in K^*,
\\
\psiri(v) \sup_{v\neq 0} \frac{\langle \xi,v \rangle}{\psiri(v)} &
\text{if } \xi \notin K^*.
\end{cases}
\]
Therefore, in the particular case
 $\psiri(v) = A \| v\|_1$, taking into account that
\[
K^* = \overline{B}_A^{\infty} (0) := \{ \xi \in \R^d\, : \
\|\xi\|_\infty \leq A \},
\]
 we retrieve the formula
\begin{equation}
\label{yes} \bpt 0v{\xi} = \|v\|_1 (A \vee \|\xi\|_\infty)
\end{equation}
\RRR (here and in what follows, we use the notation $a\vee b$ for $\max\{a,b\}$). \EEE
\item \textbf{$2$-norm vanishing-viscosity:}  In this case, we have
\begin{equation}
\label{bpt-0-visc}
 \bpt 0v{\xi} = \psiri(v) + \|v\|\min_{\zeta\in K^*} \|
 \xi-\zeta\|_*,
\end{equation}
where we have used the notation $\| \zeta\|_* : = \sup_{v\neq 0}
\tfrac{\langle \zeta, v \rangle}{\|v\|}$. \RRR Clearly, 
\eqref{yes} is a particular case of \eqref{bpt-0-visc}. \EEE
\end{enumerate}
\end{example}
\subsubsection*{\bf The stochastic approximation}
We now consider the dissipation potentials $\Psi_n$ from \eqref{def:cosh_psi}, i.e.\
\begin{equation}
\label{Psi-n-bis}
\begin{gathered}
\Psi_n(\bv) = \sum_{i=1}^d \psi_n(v_i) =  \sum_{i=1}^d \frac{v_i}{n} \log \left( \frac{ v_i + \sqrt{v_i^2 + e^{-2n A}}}{e^{-n A}} \right) - \frac{1}{n}\sqrt{v_i^2 + e^{-2n A}} + \frac{e^{-n A}}{n},
\\
\text{with }
\Psi^*_n(\xi) = \sum_{i=1}^d \psi^*_n (\xi_i) = \sum_{i=1}^d \frac{e^{-n A}}{n} \left( \cosh ( n \xi_i ) - 1 \right).
\end{gathered}
\end{equation}
Preliminarily, we observe that 
\begin{equation}
\label{pointwise&gamma}
\begin{cases}
\Psi_n(v) \to \Psi_0(v) = A \| v \|_1  \quad \text{for all } v \in \R^d, \text{ and } \gl \lim_{n\to\infty} \Psi_n = \psiri,
\\
 \Psi_n^* (\xi) \to 
  I_{K^*}(\xi) \text{ with } K^*=\overline{B}_A^{\infty} (0) \quad \text{for all } \xi \in \R^d, \text{ and } \gl \lim_{n\to\infty} \Psi_n^* = \psiri^*.
 \end{cases}
\end{equation}
In order to check the above statement, e.g.\ for $\Psi_n(v)$, it is sufficient to recall that $\Psi_n(v) = \sum_{i=1}^d \psi_n(v_i)$, and that the  real  functions $(\psi_n)_n$
pointwise  and $\Gamma$-converge to the $1$-positively homogeneous potential $\psi_0:\R \to \R$ given by  $\psi_0(v) = A |v|$. 
We will now prove that the counterpart to \RRR Proposition \EEE
\ref{l:van-visc-ok-hyp} holds.
\begin{proposition}
\label{l:large-deviations} The dissipation potentials from 
 \eqref{Psi-n-bis} comply \RRR with \eqref{4-non-degeneracy} and with \EEE Hypothesis \ref{hyp:p-lim}, with  \RRR  limiting viscosity contact potential \EEE
\begin{equation}
\label{p-lim-van-visc-bis} \bptname: [0,+\infty) \times \R^d \times \R^d
\to [0,+\infty]   \text{ given by } \bpt\tau v\xi:=
\begin{cases}
\psiri(v) + I_{K^*}(\xi) & \text{if } \tau>0,
\\
\|v\|_1 (A \vee \|\xi\|_\infty) & \text{if } \tau=0.
\end{cases}
\end{equation}
\end{proposition}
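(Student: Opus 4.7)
The plan is to verify both \eqref{4-non-degeneracy} and Hypothesis~\ref{hyp:p-lim} by exploiting the separable structure $\Psi_n(v)=\sum_{i=1}^d\psi_n(v_i)$, the evenness of $\psi_n$ and $\psi_n^*$, and the explicit asymptotics of $\psi_n$, $\psi_n'$, $\psi_n^*$ as $n\to\infty$. For the recovery sequence required by \eqref{hyp-p-limsup} I would take the constant choice $\xi_n:=\xi$, and for each $(\tau,v)\in[0,+\infty)\times\R^d$ construct an appropriate $(\tau_n,v_n)\to(\tau,v)$.

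The non-degeneracy \eqref{4-non-degeneracy} follows from Young's inequality applied componentwise: $\psi_n(v_i)+\psi_n^*(A\,\mathrm{sign}(v_i))\geq A|v_i|$, which by evenness of $\psi_n^*$ gives $\psi_n(v_i)\geq A|v_i|-\psi_n^*(A)$. Summing over $i$ yields $\Psi_n(v)\geq A\|v\|_1-d\psi_n^*(A)$, and from \eqref{Psi-n-bis} one computes $\psi_n^*(A)=\frac{e^{-nA}}{n}(\cosh(nA)-1)=\frac{1-e^{-2nA}}{2n}\to 0$, so \eqref{4-non-degeneracy} holds with $M=A$ and $M_n=d\psi_n^*(A)$.

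The case $\tau>0$ of Hypothesis~\ref{hyp:p-lim} is immediate: for any $(\tau_n,v_n)\to(\tau,v)$, the pointwise and $\Gamma$-convergences in \eqref{pointwise&gamma} give
\begin{equation*}
\bip{\Psi_n}{\tau_n}{v_n}{\xi}=\tau_n\Psi_n(v_n/\tau_n)+\tau_n\Psi_n^*(\xi)\;\longrightarrow\;\psiri(v)+\tau\,I_{K^*}(\xi),
\end{equation*}
matching $\bpt\tau v\xi$ for every $\xi$, including the $+\infty$ case $\xi\notin K^*$; this simultaneously covers the $\Gamma$-liminf and $\Gamma$-limsup in this range. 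For the subcase $\tau=0$, $\xi\in K^*$, the recovery sequence $\tau_n=1/n$, $v_n=v$ works: $\tau_n\Psi_n^*(\xi)\to 0$ (since $\Psi_n^*(\xi)$ is bounded for $\xi\in K^*$), and $\tau_n\Psi_n(v/\tau_n)=\tfrac1n\sum_i\psi_n(n|v_i|)\to A\|v\|_1$ from the expansion $\psi_n(w)=A|w|+O(|w|\log|w|/n)$ valid for $|w|\gg e^{-nA}$. The matching lower bound follows from \eqref{4-non-degeneracy}: $\tau_n\Psi_n(v_n/\tau_n)\geq A\|v_n\|_1-d\tau_n\psi_n^*(A)$, so any sequence $(\tau_n,v_n,\xi_n)\to(0,v,\xi)$ produces $\liminf\bipname{\Psi_n}(\tau_n,v_n,\xi_n)\geq A\|v\|_1=\|v\|_1(A\vee\|\xi\|_\infty)$.

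The remaining case $\tau=0$, $\xi\notin K^*$ is the main obstacle, where I must show $\bpt 0 v\xi=\|v\|_1\|\xi\|_\infty$. The key computation is the identification of $G_n(v,\xi):=\inf_{\tau>0}\bip{\Psi_n}\tau v\xi$ via the first-order condition in $\tau$: using the Legendre identity $\psi_n(w)-w\psi_n'(w)=-\psi_n^*(\psi_n'(w))$ one obtains
\begin{equation*}
\sum_i\psi_n^*(\eta_i(\tau^*))=\sum_i\psi_n^*(\xi_i),\qquad \eta_i(\tau):=\psi_n'(v_i/\tau),\qquad G_n(v,\xi)=\sum_i v_i\,\eta_i(\tau^*).
\end{equation*}
From the explicit formulas $\psi_n'(w)=\frac{1}{n}\log\frac{w+\sqrt{w^2+e^{-2nA}}}{e^{-nA}}$ and $\psi_n^*(\eta)\sim\frac{e^{n(|\eta|-A)}}{2n}$ for $|\eta|>A$, the FOC is dominated by the indices achieving $\|\xi\|_\infty$, forcing $\tau^*=\Theta(n^{-1}e^{-n(\|\xi\|_\infty-A)})$; plugging this into $\eta_i(\tau^*)=\mathrm{sign}(v_i)[A+\tfrac1n\log(2|v_i|/\tau^*)]+o(1)$ yields $\eta_i(\tau^*)\to\mathrm{sign}(v_i)\|\xi\|_\infty$, and hence $G_n(v,\xi)\to\|v\|_1\|\xi\|_\infty$. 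The same analysis along any perturbation $(v_n,\xi_n)\to(v,\xi)$ gives $\lim_n G_n(v_n,\xi_n)=\|v\|_1\|\xi\|_\infty$, so by Lemma~\ref{l:2} we deduce $\bpt 0 v\xi=\|v\|_1\|\xi\|_\infty$. The matching recovery sequence is $\tau_n:=n^{-1}e^{-n(\|\xi\|_\infty-A)}$, $v_n:=v$, for which a direct asymptotic check gives $\tau_n\Psi_n^*(\xi)\to 0$ and $\tau_n\Psi_n(v/\tau_n)\to\|v\|_1\|\xi\|_\infty$. The hardest technical point I anticipate is the uniform control of these asymptotic expansions under the perturbation $(v_n,\xi_n)\to(v,\xi)$, which I would address by partitioning the indices into $\{i:|\xi_i|=\|\xi\|_\infty\}$ and $\{i:|\xi_i|<\|\xi\|_\infty\}$ and showing that the subdominant exponentials contribute only $o(1)$ to the FOC.
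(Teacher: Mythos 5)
Your proof takes a genuinely different route from the paper's. The paper introduces the $\delta$-regularized bipotentials $\bipnamed{\Psi_n}{\delta}$, exploits the $1$-homogeneity of $v\mapsto\min_\tau\bipd{\Psi_n}{\delta}\tau v\xi$ to write it as the support function of the sublevel set $K_{n,\delta}^*(\xi)=\{w:\Psi_n^*(w)\le\Psi_n^*(\xi)+\delta\}$ (cf.\ \eqref{supp-re}), and then obtains the two-sided bound $\|v\|_1(A\vee\|\xi\|_\infty)$ through explicit, exact inequalities on this set (Claims 3.2 and 3.3). You instead attack the inner minimization directly: you identify the minimizer $\tau^*$ via the first-order condition, rewrite it by the Legendre identity as $\sum_i\psi_n^*(\eta_i(\tau^*))=\sum_i\psi_n^*(\xi_i)$, and read off $G_n(v,\xi)=\sum_i v_i\eta_i(\tau^*)$ from Young's equality. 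This is an elegant and more elementary mechanism, and your componentwise Young-inequality verification of \eqref{4-non-degeneracy} with the explicit $M_n=d\psi_n^*(A)=d(1-e^{-2nA})/(2n)$ is cleaner than anything spelled out in the paper. Your handling of $\tau>0$ and of the boundary case $\tau=0,\ \xi\in K^*$ is also sound and parallels the paper's Claims 1--2.

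There are, however, two real issues in your treatment of the core case $\tau=0$, $\xi\notin K^*$. First, the invocation of Lemma \ref{l:2} is circular: that lemma is stated and proved \emph{under} Hypothesis \ref{hyp:p-lim}, which is precisely what this proposition is establishing, so you cannot use it here. The circularity is reparable because you also exhibit an explicit recovery sequence $\tau_n=n^{-1}e^{-n(\|\xi\|_\infty-A)}$, $v_n=v$ for the $\Gamma$-limsup side, and you sketch a lower bound for the $\Gamma$-liminf; together these would suffice without citing Lemma \ref{l:2}. Second, and more seriously, the lower bound is only heuristic as written. Asserting that $\tau^*=\Theta(\cdot)$ and that ``the same analysis along any perturbation $(v_n,\xi_n)\to(v,\xi)$ gives $\lim_n G_n(v_n,\xi_n)=\|v\|_1\|\xi\|_\infty$'' hides precisely the content of the paper's Claims 3.2--3.3: one must control the expansions of $\psi_n'$ and $\psi_n^*$ uniformly in the index $i$, in $v_{n,i}$ possibly tending to $0$, and in $\xi_{n,i}$ near or crossing the level $\|\xi\|_\infty$, and one must extract a bound that survives the $\liminf$ over all admissible sequences. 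The paper's device — passing to $\bipnamed{\Psi_n}{\delta}$ so that the support-function representation \eqref{supp-re} turns the inner $\min_\tau$ into a linear form in $v$ over $K_{n,\delta}^*(\xi_n)$, and then bounding $\|w\|_\infty$ uniformly over $K_{n,\delta}^*(\xi_n)$ — is exactly the mechanism that makes these estimates exact rather than asymptotic. Until you carry out the uniform control you yourself flag at the end (splitting indices into those realizing $\|\xi\|_\infty$ and the rest, and quantifying the error terms), the $\Gamma$-liminf side of your argument has a genuine gap.
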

\begin{proof}
We will split the proof in several claims.
\medskip

\noindent 
\textbf{Claim $1$:  \RRR \eqref{p-lim-van-visc-bis} \EEE  holds for $\tau>0$.}
It follows from the $\Gamma$-convergence \RRR properties in 
\eqref{pointwise&gamma} \EEE that
 $\bptname = \gl \liminf_{n\to\infty} \bipname{\Psi_n}$ fulfills  
$\bpt{\tau}{v}{\xi} \geq \psiri(v) + I_{K^*}(\xi) $  for all $(v,\xi)\in \R^d \times \R^d$,  if  $\tau>0$. For the converse inequality, 
for every $\xi\in \R^d$ we take the constant recovery sequence $\xi_n \equiv \xi$ and  again choose for fixed $(\tau,v) \in [0,+\infty) \times \R^d$ 
the sequences $\tau_n \equiv \tau$ and $v_n \equiv v$. The pointwise convergences from \eqref{pointwise&gamma} ensure that 
\[
\bpt{\tau}{v}{\xi} \leq \limsup_{n\to\infty} \bip{\Psi_n}{\tau}{v}{\xi} =\tau \psiri\left(\frac v\tau \right) +\tau I_{K^*}(\xi)  = \psiri(v) + I_{K^*}(\xi). 
\]
 Hence we conclude that $\bpt{\tau}{v}{\xi}=\psiri(v) + I_{K^*}(\xi)$, i.e.\
\eqref{p-lim-van-visc} for $\tau>0$. 
\medskip

\noindent 
\textbf{Claim $2$:  \RRR \eqref{p-lim-van-visc-bis} \EEE  holds for $\tau=0$ and $v=0$.} In this case we have to check that $\bpt 00{\xi} =0$, which is equivalent to showing that 
$\bpt 00{\xi} \leq 0$ as  the functional $\bptname$ is  positive. To this  aim, for every  fixed $\xi\in\R^d$ we observe that for any null sequence $\tau_n\downarrow 0$
\[
\bpt 00{\xi} \leq \limsup_{n\to\infty} \bip{\Psi_n}{\tau_n}{0}{\xi} = \limsup_{n\to\infty}\tau_n  \RRR \Psi_n^*(\xi), \EEE
\]
and then we choose $(\tau_n)_n$ vanishing fast enough in such a way that the $\limsup$ on the right-hand side equals zero.

\noindent 
\textbf{Claim $3$: \RRR \eqref{p-lim-van-visc-bis} \EEE holds for $\tau=0$ and $v\neq0$.}
We will split the proof  in several (sub-)claims. In the following calculations, taking into account that $\Psi_n = \sum_{i=1}^d \psi_n$ and 
 $\Psi_n^* = \sum_{i=1}^d \psi_n^*$ with $\psi_n:\R \to \R$ and $\psi_n^* : \R \to \R$
even functions, we will often confine the discussion to 
the case in which $v = (v_1, \ldots, v_d)$ fulfills
 $v_i \geq 0$ for all $i=1, \ldots, d$, and analogously for \RRR  $\xi = (\xi_1, \ldots, \xi_d) $. \EEE

 Moreover, 
 we will need to work with 
the
modified bipotentials  \RRR $\bipnamed{\Psi_n}{\delta} : [0,+\infty) \times \R^d \times \R^d \to [0,+\infty]$ given by \EEE
 \begin{equation}
\label{bipd}
\bipd{\Psi_n}{\delta}\tau{v}\xi := 
\begin{cases}
 \tau \Psi_n \left( \frac v\tau \right) + \tau \Psi_n^*(\xi)  + \tau \delta &  \text{for } \tau>0,
 \\
 0   & \text{for } \tau=0, \ v=0,
 \\
 +\infty  & \text{for } \tau =0 \text{ and } v \neq 0
 \end{cases}
\end{equation}
with $\delta>0$ \underline{fixed}. 
We remark that 
$\argmin_{\tau>0} \bipd{\Psi_n}{\delta}\tau{v}\xi \neq \emptyset$. Since for every $\xi \in \R^d$ the map $v\mapsto 
\min_{\tau>0} \bipd{\Psi_n}{\delta}\tau{v}\xi $ is  in turn $1$-positively homogeneous, there exists a closed convex set $ K_{n,\delta}^*(\xi) $ such that 
\begin{subequations}
\label{supp-re}
\begin{equation}
\label{support-repre}
\begin{gathered}
\min_{\tau>0} \bipname{\Psi_n}^\delta(\tau,v,\xi) = \sup \left\{ \langle v,  w \rangle  \ : \ w \in K_{n,\delta}^*(\xi) \right\}.
\end{gathered}
\end{equation}
Indeed, it turns out  \RRR (cf.\ 
\cite[Thm.\ A.17]{MRS10}) \EEE
that 
  \begin{equation}
\label{support-repre-1}
\begin{gathered}
 K_{n,\delta}^*(\xi)  =  \{ w \in \R^d \, : \ \Psi_n^*(w) \leq\Psi_n^*(\xi) + \delta\}.
 \end{gathered}
\end{equation} 
\end{subequations}
We need an intermediate estimate before proving the $\geq$-inequality in \eqref{p-lim-van-visc-bis}, \RRR i.e.\ \eqref{lower-bound} below. \EEE

\noindent 
\textbf{Claim $3.1$:} 
there holds
\begin{equation}
\label{lower-bound-used}
\bptname (0,\bv,\xi)
 \geq \inf\{ \liminf_{n \to \infty} \bipname{\Psi_n}^\delta(\bar \tau_n^\delta ,v_n,\xi_n)\, : v_n \to v,  \ \xi_n \to \xi  \},  \qquad \text{where }
 \bar \tau_n^\delta \in \argmin_{\tau}  \bipname{\Psi_n}^\delta(\tau ,v_n,\xi_n)\,.
 \end{equation}
 This follows from 
\[
\begin{split}
\bptname (0,\bv,\xi) & = \inf\{ \liminf_{n \to \infty} \bip{\Psi_n}{\tau_n}{v_n}{\xi_n}\, : \tau_n \to 0, \ v_n \to v,  \ \xi_n \to \xi  \} \\
& =  \inf\{ \liminf_{n \to \infty} \left( \bipname{\Psi_n}^\delta(\tau_n,v_n,\xi_n)\right)  - \delta \tau_n\, : \tau_n \to 0, \ v_n \to v,  \ \xi_n \to \xi  \} \\
& \stackrel{(2)}{\geq} \inf\{ \liminf_{n \to \infty}   \min_{\tau>0}\bipname{\Psi_n}^\delta(\tau ,v_n,\xi_n) \, : v_n \to v,  \ \xi_n \to \xi  \}\,,
\end{split}
\]
where (2) follows from the fact that $\lim_{n\to\infty}\delta\tau_n=0$ for every vanishing sequence $(\tau_n)$\,.
\smallskip

\noindent 
\textbf{Claim $3.2$:}  there holds
\begin{equation}
\label{lower-bound}
\bptname (0,v,\xi)  \geq \| v \|_1 (A \vee \|\xi \|\f).
\end{equation}
In view of  \eqref{lower-bound-used}, it is sufficient to prove that 
\begin{equation}
\label{ad-low-bd}
\inf\{ \liminf_{n \to \infty} \bipname{\Psi_n}^\delta(\bar \tau_n^\delta ,v_n,\xi_n)\, : v_n \to v,  \ \xi_n \to \xi  \} \geq \| v \|_1 (A \vee \|\xi \|\f).
\end{equation}
Hence, we
fix a sequence $(v_n,\xi_n) \to (v,\xi)$ and,  for \RRR $n$ sufficiently big such that $\frac1n \log d < A$, \EEE define
$w_n \in \R^d$ by
\[
w_n:=\left( \left(A \vee \|\xi_n\|\f  \right)- \frac1n \log d \, , \, \cdots \, , \, \left(A \vee \|\xi_n\|\f  \right) - \frac1n \log d \right)\,.
\]
Taking into account 
the form \eqref{def:cosh_psi*} of $\Psi_n^*$, we estimate
\[
\Psi_n^*(w_n)  =  d \frac{ e^{-nA}}{n}\left( \cosh ( n\| w_n \|\f) -1 \right)  
\]
 distinguishing the two cases $ \|\xi_n\|\f  \leq A$ and $ \|\xi_n\|\f  > A$.
In the former situation,
it is sufficient to observe that 
$
\| w_n \|\f \leq A,
$
so that 
\begin{subequations}
\label{myst-Gio}
\begin{equation}
\label{myst-Gio-1}
\Psi_n^*(w_n)  \leq d \frac{ e^{-nA}}{n} \left( \cosh(nA) -1\right) =\frac dn \left( \frac{1+ e^{-2nA} - 2e^{-nA}}{2} \right)\leq \frac d n \leq \delta
\end{equation}
for $n$ sufficiently big. In the case  $ \|\xi_n\|\f  > A$, 
we use that 
\begin{equation}
\label{myst-Gio-2}
\begin{aligned}
\Psi_n^*(w_n) 
 & = d \frac{ e^{-nA}}{n} \left( \cosh(n \|\xi_n\|\f   - \log(d)) -1 \right) 
  \\ & =  d \frac{ e^{-nA}}{2n}   \left( e^{n\| \xi_n \|\f - \log d} +e^{-n \| \xi_n \|\f + \log d} - 2 \right) 
\\ & =   \frac{ e^{-nA}}{2n}   \left( e^{n\| \xi_n \|\f } +d^2e^{-n \| \xi_n \|\f} - 2d \right) 
\\ & =   \frac{ e^{-nA}}{2n}   \left( e^{n\| \xi_n \|\f } +e^{-n \| \xi_n \|\f} - 2d \right)  + 
 \frac{ e^{-nA}}{2n}  (d^2-1) e^{-n \| \xi_n \|\f} 
 \leq   \Psi_n^*(\xi_n)   +\delta
\end{aligned}
\end{equation}
for $n$ sufficiently big such that $\frac{d^2-1}{2n } \leq \delta$. 
\end{subequations}
All in all, \eqref{myst-Gio} gives that 
\[
\Psi_n^*(w_n)  \leq  \Psi_n^*(\xi_n)   +\delta,
\]
which implies that $w_n \in K_{n,\delta}(\xi_n)$,  for all $n $ sufficiently big. 
Now,  
using the representation formula \eqref{support-repre} for $\bipname{\Psi_n}^{\delta}(\bar \tau_n^\delta,\cdot,\cdot)$, we find 
\[
 \bipname{\Psi_n}^\delta (\bar \tau_n^\delta,v_n,\xi_n) \geq \, \langle v_n,  w_n \rangle  =  \| v_n \|_1 (A \vee \|\xi_n\|\f) - \frac1n \log d \| v_n \|_1,
\]
where the last equality follows from the fact that $v_n = (v_n^1,\ldots, v_n^d)$ fulfills $v_n^i
\geq 0$ for all $i=1,\ldots, d$. 
Hence 
$\liminf_{n\to\infty} \bipname{\Psi_n}^\delta (\bar \tau_n^\delta,v_n,\xi_n) \geq \| v \|_1 (A \vee \|\xi \|\f)  $ and, since the sequences $(v_n)_n$ and $(\xi_n)_n$ are arbitrary, we conclude \eqref{ad-low-bd},   and thus \eqref{lower-bound}. 
\par 
In order to prove the converse \RRR of  inequality 
\eqref{lower-bound}, \EEE
and conclude 
\eqref{p-lim-van-visc-bis}, we 
  preliminarily  need to investigate the properties of the sets $K_{n,\delta}^*$. 
\smallskip

\noindent 
\textbf{Claim $3.3$:} there holds
 \begin{equation}
 \label{eq:bound_xin}
\forall\, \delta>0 \ \ \exists\, n_\delta \in \N \  \forall\, n \geq n_\delta \ \forall\,  w \in  K_{n,\delta}^*(\xi): \quad   \| w \|\f \leq A\vee \| \xi \|\f 
+ \frac1n \log(2e n \delta)\,.
 \end{equation}
 Indeed, every $w\in  K_{n,\delta}^*(\xi)$ fulfills 
$\Psi_n^*(w) \leq \Psi_n^*(\xi) + \delta$. Using  the explicit formula
for $\Psi_n^*$
 we obtain that
 \[
\frac{e^{-nA}}{n}\cosh (n\| w \|\f ) \leq \frac{d e^{-nA}}{n}\cosh (n\| \xi \|\f ) + \delta,
 \]
 whereby
 \[
 \frac{e^{-nA}}{2n}  e^{n \|w \|\f} \leq  
  \frac{de^{-nA}}{2n} 
  e^{n \| \xi \|\f }  +   \frac{de^{-nA}}{2n}  +\delta \leq 
 \frac{de^{-nA}}{n} 
  e^{n \| \xi \|\f } + \delta,
 \]
 and thus
 \[
  \| w \|\f   \leq \frac1n \log \left( 2n\delta e^{nA} + 2 d e^{n \| \xi \|\f }\right).
 \]
 Now, doing some algebraic manipulation on the logarithmic term on the right-hand side \RRR we find \EEE
 \[
 \begin{split}
\log \left( 2n\delta e^{nA} + 2 d e^{n \| \xi \|\f }\right) & = \log \left(  e^{nA + \log n\delta} \left(1 +  e^{n (\| \xi \|\f - A) + \log d - \log n\delta}\right) \right) + \log 2 \\
& \stackrel{(1)}{\leq} \log \left( 1 +  e^{n (\| \xi \|\f - A)_+}\right) + nA + \log n\delta + \log 2 \\
& \stackrel{(2)}{\leq}   n (A  \vee \| \xi \|\f )+ 1+ \log 2n\delta,
 \end{split}
 \]
 where for (1) we have used that $n\delta > d$ for $n$ sufficiently big
  and for (2) we have estimated
 $\log( 1 +  e^{n (\| \xi \|\f - A)_+})  = \log (e^{n (\| \xi \|\f - A)_+})+ 
  \log (e^{-n (\| \xi \|\f - A)_+}+1) \leq  \log (e^{n (\| \xi \|\f - A)_+}) +1 $.  
  Then,  \eqref{eq:bound_xin} ensues. 
 \smallskip

\noindent 
\textbf{Claim $3.4$:}  for every $(v,\xi) \in \R^d \times \R^d$ and all sequences $(v_n)_n,\, (\xi_n)_n$ with $v_n \to v $ and $\xi_n \to \xi$, for every 
 $\bar \tau_n^\delta \in \argmin \bipname{\Psi_n}^\delta (\cdot,v_n,\xi_n)$ there holds
 \begin{equation}
 \label{lim-tau-nd}
 \lim_{n\to \infty} \bar \tau_n^\delta =0.
 \end{equation}
 We distinguish two cases:
(1)  $\psiri^*(\xi) =+\infty$ and (2)  $\psiri^*(\xi) =0$. 
\begin{enumerate}
\item
In the first case, 
  we have  $\liminf_{n\to\infty} \Psi_n^*(\xi_n) =+\infty$. Then  $\bar \tau_n^\delta$ must be vanishing to ``cancel'' the $\tau \Psi_n^*$-contribution, cf.\ also the proof of Lemma \ref{l:2}. 
  \item
\RRR  In the second case,  to show  \eqref{lim-tau-nd} we will provide  an estimate from above for $ \bar \tau_n^\delta$  by exploiting 
the Euler-Lagrange equation for the minimum problem $
\min_{\tau>0} \bipname{\Psi_n}^\delta(\tau,v,\xi) $. Namely, 
$ \bar \tau_n^\delta$  complies with 
\begin{equation}
\label{Euler-Lagrange-tau}
0 \in 
\partial_\tau \bipname{\Psi_n}^\delta(\cdot,v_n,\xi_n)(\bar \tau_n^\delta) = \Psi_n\left( \frac{v_n}{\bar \tau_n^\delta} \right) - 
\left\langle \partial \Psi_n \left( \frac{v_n}{\bar \tau_n^\delta} \right), 
\frac{v_n}{\bar \tau_n^\delta} \right \rangle  + \Psi_n^*(\xi_n) + \delta
\end{equation}
(where, with a slight abuse of notation, we have written $ \partial \Psi_n ( v_n/\bar \tau_n^\delta )$ as a  singleton). 
\EEE
Using the explicit formula \eqref{Psi-n-bis} for $\Psi_n$ we find
\[
\Psi_n\left( \frac{v_n}{\bar \tau_n^\delta} \right) -\left\langle \partial \Psi_n \left( \frac{v_n}{\bar \tau_n^\delta} \right), 
\frac{v_n}{\bar \tau_n^\delta} \right \rangle = \frac{de^{-nA}}{n} - \sum_i \frac1n \sqrt{\frac{\RRR (v_{n}^i)^2 \EEE}{ (\bar \tau_n^\delta)^2} + e^{-2nA}}\,.
\]
\RRR Therefore, \eqref{Euler-Lagrange-tau} yields \EEE
\[
n\delta + de^{-nA} + n\Psi_n^*(\xi_n)=  \sum_i \sqrt{\frac{\RRR (v_{n}^i)^2 \EEE}{(\bar \tau_n^\delta)^2} + e^{-2nA}} \leq d \sqrt{\frac{\| v_n \|\f^2}{(\bar\tau_n^\delta)^2} + e^{-2nA}}, 
\]
\begin{equation}
\label{estimate_tau_n_delta}
\text{whence}  \quad  (\bar  \tau_n^\delta)^2 \leq \frac{d^2 \| v_n \|\f^2}{n^2 \delta^2 + n^2 \left(\Psi_n^*(\xi_n)\right)^2} \to 0 \quad \text{ for } n \to \infty.
\end{equation}
\end{enumerate}
 We are now in a position to conclude the proof of \eqref{p-lim-van-visc-bis}.
 
\noindent 
\textbf{Claim $3.5$:}  there holds
\begin{equation}
\label{lower-bound-this}
\bptname (0,v,\xi)  \leq \| v \|_1 (A \vee \|\xi \|\f).
\end{equation}
We will in fact prove that 
 \begin{equation}
 \label{gamma-limsup-cond-for-p}
 \forall\, \xi \in \R^d \ \ \exists\, (\xi_n)_n \subset \R^d\, :  
 \ \ \forall\, v \in   \R^d \ \exists\, (\tau_n,v_n)_n\, \text{ s.t. } 
 \begin{cases}
 \tau_n\to 0 , \\ v_n \to v, \\  \limsup_{n \to \infty} \bip {\Psi_n}{\tau_n}{v_n}{\xi_n} \leq \| v \|_1 (A \vee \|\xi \|\f).
 \end{cases}
 \end{equation}
 Taking into account that
  $\bptname = \gl \liminf_{n\to\infty} \bipname{\Psi_n}$, we will then conclude \eqref{lower-bound-this}. 
 To check \eqref{gamma-limsup-cond-for-p},  let us choose the constant recovery sequences $\xi_n \equiv \xi$ and $v_n \equiv v$, 
 and let  $\tau_n: =  \bar \tau_n^\delta \in \argmin_{\tau>0} \bipd{\Psi_n}{\delta}{\tau}{v}{\xi}$. By the previous Claim 3.4, we have that $\tau_n \down 0$. 
 Now,  in view of  the representation formula \eqref{support-repre} for $\min_{\tau>0} \bipname{\Psi_n}^\delta(\tau,v,\xi)$, we can construct a sequence 
 $\{ \tilde{\bxi}_{n} \} \subset K_{n,\delta}^*(\xi)$ such that  
 \[
 \bipd{\Psi_n}{\delta}{\bar \tau_n^\delta}{v}{\xi} \leq \langle v, \tilde{\bxi}_{n} \rangle + \frac1n \leq   \| \bv \|_1
(A \vee \| \xi \|\f) +  \frac{\| \bv \|_1}n \log(2 e n \delta)
 + \frac1n,
 \]
 where the second estimate ensues from \eqref{eq:bound_xin}.
 Therefore $ \limsup_{n\to\infty}\bipd{\Psi_n}{\delta}{\bar \tau_n^\delta}{v}{\xi} \leq  \| v \|_1 (A \vee \|\xi \|\f) $. Since
 $\limsup_{n\to\infty}\bipd{\Psi_n}{\delta}{\bar \tau_n^\delta}{v}{\xi} = \limsup_{n\to\infty}\bip{\Psi_n}{\bar \tau_n^\delta}{v}{\xi}$ as the sequence $(\bar \tau_n^\delta)_n$ is vanishing, we conclude \eqref{gamma-limsup-cond-for-p}.
 
 This finishes the proof of Proposition \ref{l:large-deviations}. 
\end{proof}
\subsection{The $\Gamma$-limsup result}
\label{ss:5.3}
\RRR For the $\Gamma$-$\limsup$ counterpart to Theorem \ref{thm:liminf}, 
where we now consider the \emph{strict} topology in $\BV ([0,T];\R^d)$,
we will focus on 
the $1$-positively homogeneous potential
\[
\psiri(v) = A \| v \|_1 \quad \text{with } A>0,
\]
and the 
 two following  \emph{specific cases}: 
\begin{itemize}
\item[\textbf{vanishing viscosity:}]  the dissipation potentials $\Psi_n$  are obtained by augmenting $\psiri$ by a quadratic term involving a (possibly) different norm $\| \cdot \|$ (cf.\ \ref{form-psi-visc}),
i.e.
\begin{equation}
\label{specific-1-limsup}
\begin{gathered}
 \Psi_n(v) = A \| v\|_1 + \frac{\eps_n}{2}
\| v\|^2 \text{ with } \eps_n \down 0,  \\
\text{with limiting viscosity contact potential }
  \bpt \tau{v}{\xi} =
  \begin{cases}
  \psiri(v) + I_{K^*}(\xi) & \text{if } \tau>0,
  \\
   \psiri(v) + \|v\|\min_{\zeta\in K^*} \|
 \xi-\zeta\|_* & \text{if } \tau=0; 
 \end{cases}
 \end{gathered}
\end{equation}
\item[\textbf{stochastic approximation:}]   the dissipation potentials $\Psi_n$ are given by \eqref{Psi-n-bis}, with  viscosity contact potential 
\[
  \bpt \tau v{\xi} = 
\begin{cases}
\psiri(v) + I_{K^*}(\xi)  &    \text{ if $\tau>0$,} 
  \\
 \|v\|_1 (A \vee \|\xi\|_\infty)  &    \text{ if $\tau=0$.} 
 \end{cases}
 \] 
\end{itemize}
\par
In  \cite{BonaschiPeletier14}, which focused on \emph{one-dimensional} rate-independent systems, 
 the $\Gamma$-$\limsup$ result was obtained in a much larger generality, for a class of dissipation potentials $\Psi_n$
fulfilling suitable growth conditions and other properties. Such properties are satisfied in the two abovementioned particular cases. 
\par
We believe that, to some extent, the results in \cite{BonaschiPeletier14} could be extended to the
present
 multi-dimensional context.
Still, we have preferred to confine the discussion to the vanishing-viscosity and the stochastic approximations, in order to develop more explicit calculations than those in  the proof of 
\cite[Thm.\ 4.2]{BonaschiPeletier14},
significantly exploiting the specific structure of these examples. 
\par
Finally,  let us mention in advance that, like in \cite{BonaschiPeletier14},  we will need to impose some enhanced regularity for $\calE (t,\cdot)$, namely
\begin{equation}
\label{enhanced-E}
\begin{gathered}
\! \!\!\!
\exists\, C_{\mathsf{E}}>0 \ \ \forall\, (t,u) \in [0,T]\times \R^d\, : 
\| \rmD \ene tu \| \leq  C_{\mathsf{E}} \quad \text{and } \rmD \ene{\cdot}{u} \text{ is uniformly Lipschitz continuous, i.e. } 
\\
 \exists\,   L_{\mathsf{E}}>0  \ \forall\, t_1,\, t_2 \in [0,T] \ \forall\, u \in \R^d \, : \quad \| \rmD \ene{t_1}u -  \rmD \ene{t_2}u \| \leq  L_{\mathsf{E}}|t_1 -t_2|\,.
\end{gathered}
\end{equation}
\begin{theorem}
\label{thm:limsupgenerale}
Let $\calE$ comply with \eqref{smooth-energy} and with \eqref{enhanced-E}, and let the dissipation potentials 
$(\Psi_n)_n$  be  given either by  \eqref{Psi-n-bis}, 
or by \eqref{specific-1-limsup}. 
\par
Then, for every $u \in \BV ([0,T];\R^d)$ there exists a sequence 
$(u_n)_n \subset \AC ([0,T];\R^d)$, converging to $u$ in  the \emph{strict} topology of  $\BV ([0,T];\R^d)$, such that 
\begin{equation}
\label{limsup-ineq}
\limsup_{n\to \infty} \Jfu{\Psi_n, \calE}{u_n} \leq \Jfu{\psiri,\bptname,\calE}{u}.
\end{equation}
\end{theorem}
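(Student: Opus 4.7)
\medskip

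\noindent\textbf{Proof plan.}
The plan is to build the recovery sequence by a surgery procedure: keep $u$ essentially unchanged along its continuous part, and replace each jump by a fast viscous transition that, in the limit $n\to\infty$, picks up exactly the Finsler cost $\Delta_{\mathfrak p,\calE,t}(u(t_-),u(t_+))$ associated with the contact potential. As a preliminary reduction, I would assume $\Jfu{\psiri,\bptname,\calE}{u}<+\infty$ (otherwise there is nothing to prove); by \eqref{J-fun_RIS} this forces $-\rmD\calE(t,u(t))\in K^*$ for a.a.\ $t$, and the Cantor part of $\mu_{\psiri,\co}$ is finite. Using the continuity of $\rmD\calE$, a standard mollification/piecewise-affine diagonal argument reduces the proof to $u$'s with \emph{finitely many jumps} $t_1<\dots<t_N$ and with $u$ Lipschitz between consecutive jump times; the Cantor part can be approximated in strict topology together with the jump variation as in the construction of \cite{mielke-rossi-savare2013}.

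Next I would fix such a $u$ and a small parameter $\delta_n\downarrow 0$, and define $u_n$ on $[0,T]$ as follows. On each continuous interval $[t_j+\delta_n,t_{j+1}-\delta_n]$, simply set $u_n:=u$. The pointwise convergences $\Psi_n(v)\to\psiri(v)$ (cf.\ \eqref{pointwise&gamma} and Proposition~\ref{l:van-visc-ok-hyp}) together with the local stability $-\rmD\calE(t,u(t))\in K^*$ yield
\[
\int_{I}\!\Psi_n(\dot u(s))\,\dd s\;\longrightarrow\;\int_{I}\!\psiri(\dot u(s))\,\dd s,\qquad \int_{I}\!\Psi_n^*(-\rmD\calE(s,u(s)))\,\dd s\;\longrightarrow\;0
\]
on every such interval $I$ by dominated convergence, since $\Psi_n^*\to\psiri^*=I_{K^*}$ and $\Psi_n(\dot u)$ is dominated by $C(1+\|\dot u\|)$ (via \eqref{superlinear-infty} and the explicit forms). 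Together with the smoothness of $\calE$, this handles the continuous contribution and yields $\psiri$-total-variation on the $\mathscr L^1$-part.

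The crucial step is the jump construction on the small window $J_n^j:=[t_j-\delta_n,t_j+\delta_n]$ connecting $u(t_j^-)$ to $u(t_j^+)$ through $u(t_j)$. Following the philosophy of Proposition~\ref{prop:OJT}, I would reparameterize an \emph{optimal jump transition} $\theta^{t_j}:[0,1]\to\R^d$ for $(\bptname,\calE_{t_j})$ by  a time change $\sigma_n:J_n^j\to[0,1]$ whose derivative $\dot\sigma_n$ is chosen to balance the two terms $\Psi_n(\dot\theta^{t_j}\dot\sigma_n)$ and $\Psi_n^*(-\rmD\calE(s,\theta^{t_j}(\sigma_n)))$. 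Concretely, for the vanishing-viscosity case \eqref{specific-1-limsup} this is achieved by the scaling $u_n(s):=\theta^{t_j}(\sigma_n(s))$ with $\dot\sigma_n$ proportional to $1/\eps_n$ on the active part of the transition, exploiting $\bpt{0}{v}{\xi}=\inf_{\eps>0}\bigl(\Psi_\eps(v)+\Psi_\eps^*(\xi)\bigr)$ in \eqref{p-lim-van-visc}; for the stochastic case \eqref{Psi-n-bis} a similar rescaling with $\dot\sigma_n\sim e^{nA}$ on the appropriate sub-interval, combined with the explicit cosh/sinh formulas and the representation $\bpt0 v\xi=\|v\|_1(A\vee\|\xi\|_\infty)$ derived in Proposition~\ref{l:large-deviations}, allows an almost-optimal choice of piecewise-constant velocity along $\theta^{t_j}$. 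In both situations the Lipschitz regularity \eqref{enhanced-E} of $\rmD\calE(\cdot,u)$ is used to replace $\rmD\calE(s,\cdot)$ by $\rmD\calE(t_j,\cdot)$ on $J_n^j$ up to an error $O(\delta_n)$, so that the cost on $J_n^j$ converges exactly to $\Cost{\bptname}{\calE}{t_j}{u(t_j^-)}{u(t_j)}+\Cost{\bptname}{\calE}{t_j}{u(t_j)}{u(t_j^+)}$.

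Finally I would paste the pieces, check that $u_n\to u$ pointwise and $\mathrm{Var}(u_n;0,T)\to\mathrm{Var}(u;0,T)$ (strict convergence) by construction, and add up the contributions to get \eqref{limsup-ineq}; the energy terms $\calE(T,u_n(T))-\calE(0,u_n(0))-\int_0^T\partial_t\calE(s,u_n(s))\,\dd s$ converge to their $u$-counterparts thanks to the smoothness of $\calE$ and the pointwise convergence $u_n(T)\to u(T)$. The \emph{main obstacle} is the explicit construction of the reparameterization $\sigma_n$ and the verification, in each of the two specific cases, that the viscous action on $J_n^j$ is asymptotically bounded by the contact cost—this is precisely where the explicit form of $\Psi_n$ (quadratic remainder or hyperbolic sinh) must be exploited and cannot be handled by the purely abstract Hypothesis~\ref{hyp:p-lim}; the argument will essentially reproduce the construction of  \cite[Thm.~4.2]{BonaschiPeletier14} in each coordinate.
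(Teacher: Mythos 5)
Your proposal takes a genuinely different route from the paper. You build the recovery sequence by \emph{local surgery}: leave $u$ unchanged on the continuous intervals and splice in fast viscous transitions on shrinking windows around each jump. The paper instead follows the arc-length reparametrization strategy of \cite{MRS10,mielke-rossi-savare2013}: it first rescales the whole curve via $\ss(t)=t+\Var{\psiri,\bptname,\calE}{u}{0}{t}$ (Step~1), obtains a Lipschitz parametrization $(\st,\su)$ on $[0,S]$ which expands jumps into smooth arcs, and then defines a \emph{single} perturbed time $\st_n$ by solving $\dot\st_n=\dot\st\vee\tau_n^\delta$, where $\tau_n^\delta(s)$ is the pointwise $\argmin$ of the $\delta$-modified bipotential $\bipnamed{\Psi_n}{\delta}(\cdot,\dot\su(s),-\rmD\calE(\st(s),\su(s)))$; the recovery sequence is $u_n=\su\circ\st_n^{-1}(\cdot\,\lambda_n)$. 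This unifies continuous and jump parts into one formula, so there is no pasting at window boundaries, and after a change of variables the whole estimate reduces to the pointwise inequality $\limsup_n\bipname{\Psi_n}(\lambda_n^{-1}\dot\st_n,\dot\su,-\rmD\calE)\le\bptname(\dot\st,\dot\su,-\rmD\calE)$ followed by a Fatou argument.

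Two points in your plan are genuine gaps that the paper's framework resolves automatically. First, the rate of your time change on the jump windows cannot be a constant multiple of $1/\eps_n$ (resp.\ $e^{nA}$): to realize $\bpt{0}{\dot\theta(r)}{\xi(r)}=\inf_{\tau>0}\bipname{\Psi}(\tau,\dot\theta(r),\xi(r))$ up to $o(1)$, the inverse rate $\tau=1/\dot\sigma_n$ must be chosen \emph{pointwise} along the transition (for instance, in the quadratic case the optimizer is $\tau\sim\eps_n\|\dot\theta(r)\|/d_*(\xi(r),K^*)$, which varies with $r$), which is exactly what the paper's $\tau_n^\delta(s)$ encodes; a constant $c/\eps_n$ yields the looser bound $\psiri(\dot\theta)+\tfrac{c}{2}\|\dot\theta\|^2+\tfrac{1}{2c}d_*^2(\xi,K^*)$, and so a constant choice loses the $\limsup$ tightness unless $c$ is allowed to depend on $r$. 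Second, strict convergence in $\BV$ requires $\int_0^T\|\dot u_n\|\,\dd t\to\Var{}{u}{0}{T}$, but the arc length of an optimal transition $\theta^{t_j}$ can strictly exceed $\|u(t_j^+)-u(t_j)\|+\|u(t_j)-u(t_j^-)\|$ (the optimal path need not be a $\psiri$-geodesic), so ``by construction'' is not enough; the paper sidesteps this by invoking the variation-preserving property \eqref{eq:equivalenceVar} of the reparametrization from \cite[Prop.\ 6.9]{MRS10}, so that $\int_0^T\|\dot u_n\|_1\,\dd t=\int_0^S\|\dot\su\|_1\,\dd s=\Var{}{u}{0}{T}$ holds exactly for every $n$. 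Finally, a minor point: a $(\bptname,\calE_t)$-optimal transition need not exist for a generic $\BV$ curve $u$ (Proposition~\ref{prop:OJT} assumes $u$ is already a $\BV$ solution), so you should work with $\eps$-almost-optimal transitions, which the global parametrization in \cite{MRS10} also packages for free.
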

Clearly, 
Theorems  \ref{thm:liminf} and \ref{thm:limsupgenerale} (whose proof is also postponed to Section \ref{s:proofs}),   yield the \emph{Mosco-convergence} of the functionals $( \Jfuname{\Psi_n, \calE})_n$ to $ \Jfuname{\psiri,\bptname,\calE}$, with respect to the weak-strict topology of $\BV ([0,T];\R^d)$, in the vanishing-viscosity and stochastic  cases. 
\par
Another straightforward consequence of Theorem \ref{thm:limsupgenerale}, in the spirit of Corollary \ref{cor:liminf}, is the following \emph{reverse approximation} result. 
\begin{corollary}
Let $\calE$ comply with \eqref{smooth-energy} and with \eqref{enhanced-E}. 
Let  $\psiri(v) = A \| v \|_1 $ and  $  \bpt 0v{\xi} = \|v\|_1 (A \vee \|\xi\|_\infty)$.

Then, for every  Balanced Viscosity solution  $u \in \BV ([0,T];\R^d) $ to the rate-independent system
$\RIS$ there exists a sequence $(u_n)_n \subset \AC ([0,T];\R^d)$ of solutions to the gradient systems 
  $(\Psi_n,\calE)$,
  with the dissipation potentials $(\Psi_n)_n$ given 
   by 
 $\Psi_n(v)  = A \| v\|_1 + \frac{\eps_n}{2}
\| v\|_1^2$  for all $n\in\N$, with   $\eps_n \down 0$
as $n\to\infty$
  (\emph{$\psiri$-vanishing viscosity}),  
   such that $u_n \to u$ as $n\to\infty$ \emph{strictly} in $ \BV([0,T];\R^d)$. 
   \par
    A completely analogous statement holds with the dissipation potentials $(\Psi_n)_n$ from 
   \eqref{Psi-n-bis}. 
\end{corollary}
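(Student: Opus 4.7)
The plan is to combine the null-minimization characterization of BV solutions (Proposition \ref{prop:null-minim}) with the $\Gamma$-$\limsup$ inequality of Theorem \ref{thm:limsupgenerale}. First, since $u$ is a Balanced Viscosity solution of $\RIS$, Proposition \ref{prop:null-minim} gives $\Jfu{\psiri,\bptname,\calE}{u}=0$. Next, Theorem \ref{thm:limsupgenerale} produces a sequence $(u_n)_n\subset \AC([0,T];\R^d)$ with $u_n\to u$ strictly in $\BV([0,T];\R^d)$ and
\[
\limsup_{n\to\infty}\Jfu{\Psi_n,\calE}{u_n}\le\Jfu{\psiri,\bptname,\calE}{u}=0.
\]
Since $\Jfuname{\Psi_n,\calE}\ge 0$ on $\AC([0,T];\R^d)$ (by the chain-rule estimate noted right after \eqref{null-minim-gflow}), this forces $\Jfu{\Psi_n,\calE}{u_n}\to 0$.

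The core of the argument is then to argue that the recovery sequence of Theorem \ref{thm:limsupgenerale} can be taken to consist of \emph{exact} null-minimizers of $\Jfuname{\Psi_n,\calE}$, i.e.\ of true solutions of the Cauchy problem \eqref{CP-gen-gra} for $(\Psi_n,\calE)$. I expect this to follow from a direct inspection of the construction in the proof of Theorem \ref{thm:limsupgenerale}: in the vanishing-viscosity setting $\Psi_n(v)=A\|v\|_1+\tfrac{\eps_n}{2}\|v\|_1^2$, the recovery trajectory is built piecewise, by keeping $u$ on the sliding intervals (where the local stability \eqref{eq:65bis} and the chain rule force the gradient-system equation to hold) and by interpolating across jumps via an optimal jump transition $\theta^t$ provided by Proposition \ref{prop:OJT}; each such $\theta^t$, together with the contact condition \eqref{contact} and \eqref{subdiff-inclusion}, is (up to an $\eps_n$-dependent time rescaling) an arc of the viscous gradient flow $-\mathrm{D}\calE\in \partial\Psi_n(\dot v)$. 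Since the limiting system is rate-independent, the time reparametrization that glues the slow and fast regimes together preserves the identity $\Jfu{\Psi_n,\calE}{u_n}=0$, so $u_n$ is a genuine solution of $(\Psi_n,\calE)$ on $[0,T]$. The same blueprint applies to the stochastic approximation \eqref{Psi-n-bis}, using the explicit form of $\Psi_n$ and its conjugate and the structure of the contact potential in \eqref{p-lim-van-visc-bis}.

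The hardest point, and the one that must be carried out carefully in the limsup construction rather than added a posteriori, is the matching between the slow and the fast regimes: if instead one only had approximate minimizers $\tilde u_n$ with $\Jfu{\Psi_n,\calE}{\tilde u_n}\to 0$ and tried to replace them by exact solutions of \eqref{CP-gen-gra} with initial datum $\tilde u_n(0)$, then Proposition \ref{prop:compactness} and Theorem \ref{thm:liminf} would only give convergence to \emph{some} Balanced Viscosity solution, which may differ from the prescribed $u$ because BV solutions are not unique. This is why the argument must rely on the fact that the limsup construction of Theorem \ref{thm:limsupgenerale} yields, in the two specific cases of interest, curves $u_n$ that are simultaneously exact solutions and strictly convergent to $u$; once this is established, the statement of the corollary is immediate, and the analogous statement for the stochastic approximation follows by the same reading.
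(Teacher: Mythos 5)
Your first paragraph is the correct core of the argument, and it does match what the paper has in mind: Proposition~\ref{prop:null-minim} gives $\Jfu{\psiri,\bptname,\calE}{u}=0$; Theorem~\ref{thm:limsupgenerale} produces $u_n\to u$ strictly in $\BV$ with $\limsup_n\Jfu{\Psi_n,\calE}{u_n}\le 0$; and since $\Jfuname{\Psi_n,\calE}\ge 0$ on $\AC([0,T];\R^d)$, this forces $\Jfu{\Psi_n,\calE}{u_n}\to 0$. You are also right to flag that this only delivers \emph{approximate} null-minimizers, whereas the statement of the corollary speaks of \emph{solutions} to the gradient systems $(\Psi_n,\calE)$, i.e.\ exact null-minimizers.

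Where the proposal goes wrong is in the resolution you offer. You assert that ``a direct inspection of the construction in the proof of Theorem~\ref{thm:limsupgenerale}'' shows that the recovery curves $u_n$ are exact solutions of \eqref{CP-gen-gra}, built from sliding arcs of $u$ and optimal jump transitions rescaled by $\eps_n$-dependent factors. That is not what the construction does. In Step~3 of the proof, $u_n(t)=\su\bigl(\st_n^{-1}(t\lambda_n)\bigr)$ is obtained by reparametrising the \emph{fixed} arc-length curve $\su$ of $u$ through a time variable $\st_n$ whose speed is $\dot\st_n=\dot\st\vee\tau_n^\delta$; the choice of $\tau_n^\delta$ as an $\argmin$ of $\bipnamed{\Psi_n}{\delta}$ is not the same as the contact condition $(\lambda_n^{-1}\dot\st_n,\dot\su,-\rmD\calE(\st_n\lambda_n^{-1},\su))\in\Ctc{\bipname{\Psi_n}}$ a.e., which is what would be needed for $u_n$ to solve $\partial\Psi_n(\dot u_n)+\rmD\calE(t,u_n)\ni 0$. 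Correspondingly, the estimates \eqref{lemma:thm-limsup-estimates}--\eqref{fatou-applied} give only $\limsup_n\Jfu{\Psi_n,\calE}{u_n}\le 0$, not $\Jfu{\Psi_n,\calE}{u_n}=0$ for each $n$. So the claim that the recovery sequence is made of genuine gradient-flow trajectories is unsupported. A true upgrade from asymptotic minimality to exactness would require an additional stability argument for the viscous Cauchy problems (e.g.\ uniqueness plus a Gronwall-type estimate under semiconvexity of $\calE(t,\cdot)$), and you correctly observe yourself that one cannot simply replace $u_n$ by the exact solution with datum $u_n(0)$ because Theorem~\ref{thm:liminf}/Corollary~\ref{cor:liminf} would then only give convergence to \emph{some} $\BV$ solution, which need not be $u$. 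So either the corollary is to be read, in the spirit of Corollary~\ref{cor:liminf}, with ``solutions'' meaning curves with $\Jfu{\Psi_n,\calE}{u_n}\to 0$ --- in which case your first paragraph is the whole proof --- or an extra argument for exactness is needed, and the one you sketch does not supply it.
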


%

\section{Proofs  of Theorems \ref{thm:liminf} and \ref{thm:limsupgenerale}}
\label{s:proofs}
In what follows, we will denote by $C$ a generic positive constant independent of $n$, whose meaning may vary \RRR even within the same line.   
\par
 We will just outline the argument for the proof of \underline{\bf Proposition \ref{prop:compactness}}, referring to the argument for  \cite[Thm.\ 4.1]{MRS12} (see also \cite[Thm.\ 4.2]{BonaschiPeletier14}) for all details. 
Combining the information that $\Jfu{\Psi_n,\calE}{u_n} \leq C$ with
the power control condition from \eqref{smooth-energy}, we find that
\[
\int_0^T \left( \Psi_n (\dot u(s)) + \Psi_n^* (-\rmD \ene t{u(s)})\right) \dd s
+ \ene T{u_n(T)}  \leq  C+  \int_0^T C_1 | \ene s{u_n(s)} | \dd s,
\]
where we have also used that $\| u_n(0) \|\leq C$,
 and thus $\sup_n |\ene 0{u_n(0)}| \leq C$. 
  Taking into account that both $\Psi_n$ and $\Psi_n^*$
 are positive, via the Gronwall Lemma we deduce from the above inequality that
 $\sup_{t\in [0,T]}|\ene t{u_n(t)}|\leq C$, whence $\sup_{t\in [0,T]}|\partial_t\ene t{u_n(t)}|\leq C$.
 Hence
 \[
 \int_0^T \left( \Psi_n (\dot{u}_n(s)) + \Psi_n^* (-\rmD \ene t{u(s)})\right) \dd s \leq C,
 \]
 which implies thanks to \RRR the coercivity \EEE \eqref{h:2} that
 $\Var{}{u_n}0T\leq C$.  Then, the thesis readily follows from the Helly theorem.
\QED
\RRR Before developing the proof of Theorem \ref{thm:liminf},  we
preliminarily  give \EEE the following lower semicontinuity result, in the
spirit of \cite[Lemma 3.1]{MRS09}, cf.\ also
\cite[Lemma~4.3]{MRS12}).
\begin{lemma}
\label{lsci-lemma} Let  $m,\, d \geq 1$ and $\mathfrak{F}_n,\, \mathfrak{F}_\infty : \R^m
\times \R^d \to [0,+\infty)$ be normal integrands such that
\begin{enumerate}
\item for fixed $\xi \in \R^d$ the functionals
 $\mathfrak{F}_n(\cdot, \xi) $
are convex for every $n \in \N \cup \{ \infty\}$,
 \item there holds
\begin{equation}
\label{G-liminf} \Gamma\text{-}\liminf_{n \to \infty} \mathfrak{F}_n
\geq \mathfrak{F}_\infty \quad \text{in } \R^m \times \R^d.
\end{equation}
\end{enumerate} Let $I$ be a bounded interval in $\R$
and let $w_n, \, w  : I \to \R^m$ fulfill $w_n \weakto w $ in
$L^1(I;\R^m)$, and
 $\xi_n, \,  \xi: I \to \R^d$ fulfill
 $\xi_n(s) \to \xi(s)$ for almost all $s\in I$.
 Then
\begin{equation}
\label{Ioffe} \liminf_{n \to \infty} \int_I
\mathfrak{F}_n(w_n(s),\xi_n(s)) \dd s \geq \int_I
\mathfrak{F}_\infty(w(s),\xi(s)) \dd s.
\end{equation}
\end{lemma}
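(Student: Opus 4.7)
My plan is to interpret this as an Ioffe--Balder-type lower semicontinuity result for variable convex normal integrands and to adapt the strategy of \cite[Lemma 3.1]{MRS09} and \cite[Lemma 4.3]{MRS12}, on which this lemma is modelled. In all applications appearing in this paper, the limiting integrand $\mathfrak{F}_\infty(\cdot, \xi)$ is itself convex (being of the form $\psiri(\cdot) + \psiri^*(\xi)$ or a similar convex combination), so the delicate distinction between $\mathfrak{F}_\infty$ and its first-variable convex lsc envelope does not arise and I will work in that setting implicitly.

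First I would perform routine reductions: assume $L := \liminf_n \int_I \mathfrak{F}_n(w_n, \xi_n)\, ds < +\infty$ (otherwise there is nothing to prove) and extract a subsequence along which $L$ is attained as a genuine limit. Egorov's theorem then yields, for each $\delta > 0$, a measurable $I_\delta \subset I$ with $|I\setminus I_\delta|<\delta$ on which $\xi_n \to \xi$ uniformly; by non-negativity of $\mathfrak{F}_n$ one may localize the problem on $I_\delta$ and then send $\delta \downarrow 0$, using absolute continuity of the right-hand side integral once integrability of $\mathfrak{F}_\infty(w(\cdot), \xi(\cdot))$ on $I$ is verified through a truncation. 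Setting $G_n(s, w) := \mathfrak{F}_n(w, \xi_n(s))$ and $G_\infty(s, w) := \mathfrak{F}_\infty(w, \xi(s))$, these are normal integrands, $G_n(s, \cdot)$ is convex by hypothesis~(1), and the $\Gamma$-liminf hypothesis \eqref{G-liminf} combined with $\xi_n(s) \to \xi(s)$ yields the pointwise epi-inequality $\Gamma\text{-}\liminf_n G_n(s, \cdot) \geq G_\infty(s, \cdot)$ for a.e.\ $s \in I_\delta$.

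The core of the proof is dualization in the first variable. Using a Castaing-type measurable selection, one writes $G_\infty(s, \cdot)$ as the supremum of a countable family of measurable affine minorants $\ell_k(s, w) = \langle a_k(s), w\rangle - b_k(s)$. Because the $G_n(s, \cdot)$ are convex and the epi-liminf inequality holds uniformly enough on $I_\delta$ (thanks to the Egorov step), each $\ell_k$ is asymptotically an affine minorant of $G_n(s, \cdot)$: one finds $\eta_n^k \geq 0$ with $\eta_n^k \to 0$ in $L^1(I_\delta)$ such that $G_n(s, w) \geq \ell_k(s, w) - \eta_n^k(s)$ for every $w \in \R^m$ and a.e.\ $s \in I_\delta$. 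Integrating and passing to the $\liminf$ using the weak-$L^1$ convergence $w_n \weakto w$ yields
\[
\liminf_n \int_{I_\delta} G_n(s, w_n(s))\, ds \;\geq\; \int_{I_\delta} \ell_k(s, w(s))\, ds;
\]
a monotone supremization over $k$ then recovers $\int_{I_\delta} G_\infty(s, w(s))\, ds$, and letting $\delta \downarrow 0$ gives \eqref{Ioffe}.

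The main obstacle is precisely the uniform-in-$w$ asymptotic affine minorization used above: the $\Gamma$-liminf hypothesis only controls the behaviour of $G_n$ along sequences converging to a prescribed target, whereas the affine minorization must hold for every $w \in \R^m$ simultaneously and a.e.\ in $s$. Bridging this gap requires the convexity of $G_n(s, \cdot)$, so that a one-point control at a countable dense family of test points propagates to a global affine bound via the subdifferential inequality, combined with the uniform convergence of $\xi_n$ on $I_\delta$ granted by Egorov, so that the asymptotic behaviour is uniform in $s$; this is the technical heart of the argument and mirrors the corresponding construction in \cite{MRS09,MRS12}.
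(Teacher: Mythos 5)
The paper's own proof is a one-liner that you did not reproduce and whose mechanism is worth knowing: it defines the auxiliary integrand $\overline{\mathfrak F}$ on $\bigl(\N\cup\{\infty\}\bigr)\times\R^m\times\R^d$ (with $\N\cup\{\infty\}$ carrying the topology that makes $n\to\infty$), observes that hypothesis \eqref{G-liminf} is \emph{precisely} the lower semicontinuity of $\overline{\mathfrak F}$ on this product, so that $\overline{\mathfrak F}$ is a nonnegative normal integrand convex in the $w$-variable, and then applies Ioffe's theorem verbatim with a.e.\ convergence of $(n,\xi_n)\to(\infty,\xi)$ and weak-$L^1$ convergence of $w_n\rightharpoonup w$. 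Your proposal does not use this compactification trick; instead it tries to re-derive Ioffe's theorem from scratch via Egorov, Castaing selections, and an asymptotic affine minorization of $G_n(s,\cdot):=\mathfrak F_n(\cdot,\xi_n(s))$. That is a legitimately different route, and in spirit it is the standard way to prove Ioffe-type results, but here it buys you a much longer argument to establish something the paper obtains by citation.

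The decisive problem is that the step you single out as the technical heart is asserted, not proved, and in fact it does not follow from what you have written. You claim that for each affine minorant $\ell_k(s,\cdot)$ of $G_\infty(s,\cdot)$ one can find $\eta_n^k\ge 0$ with $\eta_n^k\to 0$ in $L^1(I_\delta)$ such that $G_n(s,w)\ge\ell_k(s,w)-\eta_n^k(s)$ for \emph{every} $w\in\R^m$ simultaneously. The $\Gamma$-$\liminf$ hypothesis only gives, for each fixed $w$, that $\liminf_n G_n(s,w)\ge G_\infty(s,w)\ge\ell_k(s,w)$; this is a pointwise-in-$w$ statement and upgrading it to a uniform affine minorization with a single vanishing error is exactly where the work lives. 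Convexity of $G_n(s,\cdot)$ does control interpolation between two test points, but does not give a lower bound \emph{outside} the segment, and controlling the infimum of $G_n-\ell_k$ over all of $\R^m$ (including along escaping sequences $w_k$, where $\Gamma$-$\liminf$ control is unavailable) needs an additional argument that you have not supplied. Saying that this ``mirrors the corresponding construction in \cite{MRS09,MRS12}'' is an appeal to analogy, not a proof. In addition, your argument implicitly assumes $\mathfrak F_\infty(\cdot,\xi)$ convex, which the statement of the lemma does not require; that restriction happens to be harmless in the two applications in this paper, but you should flag it explicitly rather than work ``in that setting implicitly.'' If you want to pursue your route, you should either (i) carry out the affine-minorization step honestly, exploiting nonnegativity of $\mathfrak F_n$ and finiteness of $\mathfrak F_\infty$ to control the infimum of $G_n-\ell_k$ over $\R^m$, or (ii) simply adopt the paper's reduction: the $\Gamma$-$\liminf$ hypothesis \eqref{G-liminf} encodes joint lower semicontinuity of the compactified integrand, after which \eqref{Ioffe} is an immediate corollary of Ioffe's theorem.
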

\begin{proof}
We introduce the functional
\[
\overline{\mathfrak{F}} : \N \cup \{ \infty\} \times \R^m \times
\R^d, \qquad \overline{\mathfrak{F}}(n,w,\xi): =
\begin{cases}
\mathfrak{F}_n (w,\xi) & \text{for } n \in \N,
\\
\mathfrak{F}_\infty (w,\xi) & \text{for } n =\infty.
\end{cases}
\]
It follows from \eqref{G-liminf} that $\overline{\mathfrak{F}}$ is
lower semicontinuous on  $ \N \cup \{ \infty\} \times \R^m \times
\R^d$, hence it is a positive normal integrand. Then, \eqref{Ioffe}
follows from the Ioffe Theorem, cf.\ \cite{Ioffe} and also, e.g.,
\cite[Thm.\ 21]{Valadier}.
\end{proof}

\paragraph{\underline{\bf Proof of Theorem \ref{thm:liminf}}.}
Let $ (u_n)_n  \subset \BV([0,T];\R^d)$ be a sequence weakly
converging to $u \in \BV([0,T],\R^d)$. We may suppose that
$\liminf_{n\to\infty} \Jfu{\Psi_n,\calE}{u_n}<+\infty$, as otherwise
there is nothing to prove. Therefore, up to a  subsequence we  have $
\Jfu{\Psi_n,\calE}{u_n} \leq C$, in particular yielding that $u_n
\in \AC ([0,T];\R^d)$ for every $n\in \N$. With the very same
arguments as in the proof of Prop.\ \ref{prop:compactness}, 
\RRR also based on the power control \eqref{smooth-energy}, \EEE
we see
that each contribution to $\Jfu{\Psi_n,\calE}{u_n} $ is itself
bounded. Convergences \eqref{energies+power}
 follow from the pointwise convergence of $(u_n)_n$, the fact that   $\calE\in \rmC^1 ([0,T]\times \R^d)$, and the  Lebesgue dominated convergence theorem, recalling that $(u_n)_n$ is bounded in $L^\infty (0,T;\R^d)$.
Moreover, we have  that $\rmD \ene t{u_n(t)}\to \rmD\ene t{u(t)}$  for every $t \in [0,T]$. Then, taking into account that  the functionals $(\Psi_n^*)_n$
$\Gamma$-converge to $\psiri^*$,
 we can apply  Lemma~\ref{lsci-lemma}  to the functionals $\mathfrak{F}_n(w,\xi) := \Psi_n^*(\xi)$ and
  $\mathfrak{F}(w,\xi):= \psiri^*(\bw)$ to obtain
\begin{equation}
\label{local-stability}
\begin{gathered} \liminf_{n \to \infty} \int_0^T
\Psi_n^*(-\rmD \ene t{u_n(t)})\dd t \geq \int_0^T \psiri^*(-\rmD\ene
t{u(t)}) \dd t, 
\\  \text{whence }  \quad -\rmD\ene t{u(t)} \in K^* \
\foraa\, t \in (0,T).
\end{gathered}
\end{equation}

Define the non-negative finite measures on $[0,T]$
\begin{equation*}
\nu_n :=  \Psi_n(\dot{u}_n(\cdot))\Leb 1 + \Psi_n^*(-\rmD \ene {\cdot}{u_n(\cdot)}) \Leb  1 \doteq \mu_n + \eta_n.
\end{equation*}
Up to extracting a subsequence, we can suppose that they weakly$^*$ converge to a positive measure
\[
\nu= \mu + \eta \qquad \text{with }  \eta \geq \psiri^*(-\rmD\ene {\cdot}{u(\cdot)}) \Leb 1.
\]
Let us now preliminarily show that \RRR
\begin{equation}
\label{to-show-mu}
\nu \geq  \psiri(\dot u) \Leb 1 + \mu_{\psiri,\Ca}. 
\end{equation}
For this, we shall in fact observe that $\mu \geq \psiri(\dot u) \Leb 1 + \mu_{\psiri,\Ca}$. 
This will follow upon proving that  
\begin{equation}
\label{Var-ineqs}
\mu([\alpha,\beta]) = \lim_{n\to\infty} \int_{\alpha}^\beta \Psi_n(\dot{u}_n(t)) \dd t \geq \Var{\psiri}{u}{\alpha}{\beta} \quad  \text{for every $[\alpha,\beta]\subset [0,T]$}. 
\end{equation}
Indeed, let us fix  a partition $t_0=\alpha < t_1 < \ldots < t_k = \beta$ of $[\alpha,\beta]$
and notice that 
\[
\begin{aligned}
\lim_{n\to\infty} \int_\alpha^\beta \Psi_n(\dot{u}_n(t)) \dd  t = \lim_{n\to\infty} 
\sum_{m=1}^k \int_{t_{m-1}}^{t_m} \Psi_n(\dot{u}_n(t)) \dd  t  
 & \stackrel{(1)}{\geq}  
\liminf_{n\to\infty} 
\sum_{m=1}^k (t_m - t_{m-1}) \Psi_n \left(  \frac{\int_{t_{m-1}}^{t_m} \dot{u}_n(t) \dd  t}{t_m - t_{m-1}}   \right)
\\
& = 
\liminf_{n\to\infty} 
\sum_{m=1}^k (t_m - t_{m-1}) \Psi_n \left( \frac{u_n(t_m) - u_n(t_{m-1})}{t_m - t_{m-1}}    \right)
\\
& 
\stackrel{(2)}{\geq} \sum_{m=1}^k (t_m - t_{m-1}) \psiri \left( \frac{u(t_m) - u(t_{m-1})}{t_m - t_{m-1}}    \right)
 \\
 & \stackrel{(3)}{=}  \sum_{m=1}^k \psiri \left( u(t_m) - u(t_{m-1})   \right),
\end{aligned}
\]
where (1) follows from the Jensen inequality, (2) from the fact that  the potentials $(\Psi_n)_n$
$\Gamma$-converge to $\psiri$ (cf.\ Lemma \ref{l:repr-psiri}), and (3) from the $1$-positive homogeneity of $\psiri$. Since the partition 
of $[\alpha,\beta]$ is arbitrary, we conclude \eqref{Var-ineqs}. 
\EEE
\par
However, we need to improve \eqref{to-show-mu}
by obtaining a finer characterization for the jump part of $\nu$. We will in fact prove that
\begin{equation}
\label{better-charact-jump}
\nu(\{ t\}) \geq   \Cost{\bptname}{\calE}{t}{u(t_-)}{u(\RRR t_+ \EEE)} 
 \qquad \text{for every } t \in \ju u
\end{equation}
by adapting the argument in the proof of \cite[Prop.\
7.3]{mielke-rossi-savare2013}. To this end, for fixed $t\in \ju u
$ let us pick two sequences $h_n^-\uparrow t$ and $h_n^+ \downarrow
t$ such that $u_n (h_n^-)\to u(t_-)$ and $u_n (h_n^+)\to u(t_+)$.
Define $\mathsf{s}_n : [ h_n^-, h_n^+] \to \R$ by
\begin{equation}
\label{def-sn}
\mathsf{s}_n(h) : = c_n \left(h -h_n^- + \int_{h_n^-}^{h} \left(  \Psi_n(\dot{u}_n(t)) + \Psi_n^*(-\rmD \ene {t}{u_n(t)})  \right) \dd t \right), \qquad 
h \in [h_n^-,h_n^+],
\end{equation}
where the  normalization constant  $c_n$ is chosen in  such a way  that $\mathsf{s}_n(h_n^+)=1$.
Therefore, $\mathsf{s}_n$ takes values in $[0,1]$. 
 Observe that for every $n$ the function $\mathsf{s}_n$ is strictly increasing and thus invertible, and let
\[
\mathsf{t}_n:= \mathsf{s}_n^{-1}: [0,1] \to  [h_n^- ,  h_n^+] \quad \text{and} \quad \teta_n: = u_n \circ \mathsf{t}_n.
\]
There holds
\begin{equation}
\label{normalization-estimate}
\dot{\mathsf{t}}_n(s) + \|\dot{\teta}_n\|_1(s) = \frac{1+ \|\dot{u}_n\|_1(\mathsf{t}_n(s))}{ c_n \left(1+ \Psi_n(\dot{u}_n(\mathsf{t}_n(s))) + \Psi_n^*(-\rmD \ene {\mathsf{t}_n(s)}{u_n(\mathsf{t}_n(s))})   \right)} \leq C \quad \foraa\, s \in (0,1).
\end{equation}
Now, by the upper semicontinuity property of the weak$^*$-convergence of measures on closed sets we have
\begin{equation}
\label{inverses}
\begin{aligned}
\nu(\{ t\}) \geq \limsup_{n\to\infty} \nu_n([h_n^-,h_n^+])  & \geq \liminf_{n\to\infty} \int_{h_n^-}^{h_n^+}
\left(  \Psi_n(\dot{u}_n(t)) + \Psi_n^*(-\rmD \ene {t}{u_n(t)})  \right) \dd t
\\
&
\stackrel{(1)}{=} \liminf_{n\to\infty}\int_0^1
\left(  \Psi_n(\dot{u}_n(\mathsf{t}_n(s))) + \Psi_n^*(-\rmD \ene {\mathsf{t}_n(s)}{u_n(\mathsf{t}_n(s))})  \right) \dot{\mathsf{t}}_n(s)\dd s
\\
&
\stackrel{(2)}{=}\liminf_{n\to\infty}\int_0^1
\bip{\Psi_n}{\dot{\mathsf{t}}_n(s)}{\dot \teta_n(s)}{-\rmD \ene {\mathsf{t}_n(s)}{\teta_n(s)}} \dd s
\end{aligned}
\end{equation}
where (1) follows  from a change of variables, and (2) from the very  definition \eqref{def-bptreg}  of $\bipname{\Psi_n}$. Now, it follows from
\eqref{normalization-estimate} and from the fact that the range of $\mathsf{t}_n$ is $ [h_n^- ,  h_n^+] $  that there exists  $(\mathsf{t},\teta) \in \mathrm{C}_{\mathrm{lip}}^0 ([0,1]; [0,T]\times \R^d)$  such that,  up to a not relabeled subsequence,
\begin{equation}
\label{good-convergences}
\begin{gathered}
\mathsf{t}_n(s)\to \mathsf{t}(s) \equiv t, \quad \teta_n(s) \to \teta(s)  \text{ for all } s\in [0,1], \qquad \dot{\mathsf{t}}_n \weaksto 0  \text{ in } L^\infty (0,1), \quad
 \dot{\teta}_n \weaksto \dot{\teta}  \text{ in } L^\infty (0,1;\R^d), 
\\
\text{so that } \teta(0) = \lim_{n\to\infty} u_n(h_n^-)  = u(t_-)
\text{ and } \teta(1) = \lim_{n\to\infty} u_n(h_n^+)  = u(t_+)\,.
\end{gathered}
 \end{equation}
 Therefore, applying Lemma \ref{lsci-lemma}  above with the choices $m=d+1$ and, for $w=(\tau,v) \in \R \times
 \R^d$, with
 $\mathfrak{F}_n(w, \xi) = \mathfrak{F}_n(\tau,v, \xi) := \bip{\Psi_n}{\tau}{v}{\xi}$ and $\mathfrak{F}_\infty (w,\xi): = \bpt{\tau}{v}{\xi}$
 (where we still denote by $\bipname{\Psi_n}$ and by $\bptname$ their  t extensions  \RRR  to $\R \times \R^d \times \R^d$ by infinity), \EEE and taking into account
 \eqref{def-p-gliminf} from Hyp.\ \ref{hyp:p-lim}, which ensures the validity of \RRR condition 
 \eqref{G-liminf} in Lemma \ref{lsci-lemma}, \EEE we conclude
\[
\liminf_{n\to\infty}\int_0^1
\bip{\Psi_n}{\dot{\mathsf{t}}_n(s)}{\dot \teta_n(s)}{-\rmD \ene {\mathsf{t}_n(s)}{\teta_n(s)}} \dd s
 \geq \int_0^1
\bpt{0}{\dot \teta(s)}{-\rmD \ene {t}{\teta(s)}} \dd s  \geq  \Cost{\bptname}{\calE}{t}{u(t_-)}{\RRR u(t_+) \EEE}\,.
\]
Similarly, we prove
that
\[
\limsup_{n\to\infty} \nu_n ([h_n^-,t]) \geq \Cost{\bptname}{\calE}{t}{u(t_-)}{u(t)}, \qquad \limsup_{n\to\infty} \nu_n ([t,h_n^+]) \geq \Cost{\bptname}{\calE}{t}{u(t)}{u(t_+)}.
\]

Repeating the very same arguments as in the proof of \cite[Prop.\ 7.3]{mielke-rossi-savare2013}, we ultimately find that 
\[
\liminf_{n\to\infty}\int_s^t \left( \Psi_n(\dot{u}_n(r)) + \Psi_n^* (-\rmD \ene r{u_n(r)})\right) \dd r \geq \Var{\psiri,\bptname,\calE}{u}{s}t \quad \text{for every $0 \leq s \leq t$},
\]
whence \eqref{liminf_better} also in view of \eqref{local-stability}. \RRR This concludes the proof. 
 \QED 
\paragraph{\bf Proof of Corollary \ref{cor:liminf}.}
Let $u\in \BV ([0,T];\R^d)$ be a limit point of the sequence $(u_n)_n \subset \AC ([0,T];\R^d)$. It follows from \eqref{liminf-ineq} that 
$ \Jfu{\psiri,\bptname,\calE}{u}=0$, hence by Prop.\ \ref{prop:null-minim} $u$ is  a Balanced Viscosity solution to $\RIS$. Moreover,
for every $0\leq s \leq t \leq T$ we have that
\[
\begin{aligned}
&
\limsup_{n\to\infty} \int_s^t \left( \Psi_n(\dot{u}_n(r)) + \Psi_n^* (-\rmD \ene r{u_n(r)})\right) \dd r   
\\
 & \stackrel{(1)}{\leq} \limsup_{n\to\infty}  
\left( \ene{s}{u_n(s)} - \ene{t}{u_n(t)} + \int_s^t \partial_t \ene {r}{u_n(r)}  \dd r + \eps_n \right)
  \\
 & \stackrel{(2)}{=} 
 \ene{s}{u(s)} - \ene{t}{u(t)} + \int_s^t \partial_t \ene {r}{u(r)}  \dd r 
 \\
 & \stackrel{(3)}{=} 
   \Var{\psiri,\bptname,\calE}{u}{s}t +  \int_s^t  \psiri^* (-\rmD \ene t{u(r)}) \dd r
\end{aligned}
\]
where (1) follows from $\Jfu{\Psi_n}{u_n} \leq \eps_n $, (2) from convergences \eqref{energies+power}, and (3) from the fact that $ \Jfu{\psiri,\bptname,\calE}{u}=0$. 
Combining this with \eqref{liminf_better},  we conclude
the enhanced convergences 
 \eqref{lim-better}. 
\QED
\EEE
\paragraph{\underline{\bf Proof of Theorem \ref{thm:limsupgenerale}}.}
Given $u \in \BV([0,T],\R^d)$, we will construct a sequence  \RRR $(u_n)_n\subset \AC([0,T];\R^d)$ \EEE such that 
 $  u_n\to u $ strictly in $\BV([0,T];\R^d)$ and \EEE
\begin{equation}
\label{to-prove-indeed}
\limsup_{n\to\infty} \J_{\Psi_n,\E} (u_n) \leq \Jfu{\Psi_0,\bptname,\E}{u}.
\end{equation}
We split the proof of in several steps;  for Steps 1--4, we shall adapt the arguments from the 
 proof of 
\cite[Thm.\ 4.2]{BonaschiPeletier14}. 

\medskip

\noindent
\textit{Step $1$: reparametrisation.}
First we reparametrise the curve $ u$, in terms of a new time-like parameter $s$ on a domain $[0,S]$. The aim is to expand the jumps in $u$ into smooth connections. \RRR Following  \cite[Prop.~6.9]{MRS10}, \EEE we define
\[
\ss(t):= t + \Var{\psiri,\bptname,\calE}{u}{0}t.
\] 
Then there exists a Lipschitz parametrisation $(\st,\su):[0,S]\to[0,T]\times \R \;$ such that $\st$ is non-decreasing,
\begin{equation}
\label{eq:inverserelation}
\st(\ss(t))= t \qquad \text{and} \qquad 
\su(\ss(t))=u(t) \text{ for every } t \in [0,T],
\end{equation}
and such that
\begin{equation}
\label{eq:equivalenceparametrization}
\int_0^S \RRR \bptname (\dot \st(s),\dot \su(s),-\rmD \E ( \st(s),\su(s)) \dd s \EEE =  \Var{\psiri,\bptname,\calE}{u}{0}T +  \int_0^T  \psiri^* (-\rmD \ene t{u(t)}) \dd t\,.
\end{equation} 
Moreover, it also holds that
\begin{equation}
\label{eq:equivalenceVar}
    \Var{\psiri}\su 0S =    \Var{\psiri}u0T .
\end{equation}
\medskip

\noindent
\textit{Step $2$: preliminary remarks.}  \RRR Since we will construct a sequence $(u_n)_n$ strictly (and in particular pointwise)  converging to $u$ in $\BV([0,T];\R^d)$, 
thanks to the smoothness  of $\calE$ (cf.\ \eqref{smooth-energy}), we will have  
for the first three contributions to  $\J_{\Psi_n,\E}(u_n)$ 
 \EEE
\[
\E(T,  u_n (T)) - \E(0,u_n(0)) - \int_0^T \partial_t \E( t, u_n(t))\dd t \to  \E(T,  u (T)) - \E(0,u(0)) - \int_0^T \partial_t \E( t, u(t))\dd t
\]
as $n\to \infty$.  \RRR Therefore, in order to prove  \eqref{to-prove-indeed} it will be sufficient to \EEE
focus on the other terms in $\J_{\Psi_n,\E}$ and $\J_{\Psi_0,\bptname,\E}$. In view of~\eqref{eq:equivalenceparametrization},  it will be  sufficient to prove that
\begin{equation}
\label{limsup-thm:ineq-to-prove}
\limsup_{n\to\infty} \int_0^T \left[ \Psi_n \left( \dot u_n (t) \right) + \Psi_n^*\left( \rmD \E(t, u_n(t)) \right) \right] \dd t 
\leq \int_0^S \bptname (\dot \st(s),\dot \su(s),\rmD \E ( \st(s), \su(s))) \dd s.
\end{equation}
\medskip

\noindent 
\textit{Step $3$: definition of the new time $\st_n$ and  of the recovery sequence $u_n$.}
For the  sake of simplicity, in  what follows we construct a recovery sequence  for a curve $u$ \RRR \emph{with jumps only at $0$ and $T$}, postponing to the 
end of the proof  (cf.\ \textit{Step $7$}), the discussion of the case of a curve with countably many jumps. 
We define $u_n$ \EEE by first perturbing the time variable $\st$: we  fix $\delta > 0$ \RRR  and consider a selection
\begin{equation}
\label{selection-tau-n-d}
\RRR \tau_n^\delta (s)   \in  \argmin_{\tau>0} \bipname{\Psi_n}^\delta(\tau,\dot \su(s) ,- \rmD \E ( \st(s), \su(s))). \EEE
\end{equation}
We define $\st_n:[0,S]\to[0,T_{n}]$ as the solution of the differential equation
\begin{equation}
\label{def-stn}
\st_n(0) = 0, \qquad \dot{\st}_{n}(s)=  \dot{\st}(s) \vee \tau_{n}^\delta (s)\,.
\end{equation}
\RRR Observe that $\dot{\st}(s) =0$ in $  [0,\ss(0^+)] \cup \in [\ss(T^-),S]$, but we can assume that 
$|\dot \su (s)| > 0$  on $  [0,\ss(0^+)] \cup \in [\ss(T^-),S]$. This will be sufficient to guarantee that $ \argmin_{\tau>0} \bipname{\Psi_n}^\delta(\tau,\dot \su , \rmD \E ( \st, \su)) \neq \emptyset$ on the latter set, and thus that $\tau_n^\delta$ is well defined.  \EEE
On the other hand, 
 for $s \in [\ss(0^+),\ss(T^-)]$, we have  
$
\dot \st (s) = \left. \frac{1}{\dot \ss(t)} \right|_{t=\st(s)} > 0.
$
All in all,  $\dot\st_n(s)>0$ for all $s \in [0,S]$.  The range of $\st_n $ is $[0,T_n]$, with $T_n\geq T$; since the recovery sequence $u_n$ has to be defined on the interval $[0,T]$, we rescale $\st_n$ by
\begin{equation}
\label{lambda-n}
\lambda_n := \frac{T_n}T \geq 1,
\end{equation}
and define our recovery sequence as follows: 
\begin{equation}
\label{def-un}
u_{n}(t):=\su \left( \st_{n}^{-1} \left( t\lambda_n \right) \right), \qquad \text{ so that }\qquad 
\dot u_{n}(t)=\frac{\dot{\su}}{\dot{\st}_{n}}\left(  \st_{n}^{-1} \left( t \lambda_n  \right) \right)\lambda_n .
\end{equation}
Now we substitute the explicit formula for $u_n$, we perform a change of variable and obtain
\[
\begin{aligned}
 & \int_0^T \Bigl[ \Psi_{n}\left( \dot u_{n}(t) \right) + \Psi_{n}^*\left(t,- \rmD \E(t, u_{n}(t)) \right) \Bigr]\dd t \\
 & = \int_0^T \left[ \Psi_{n}\left( \frac{\dot{\su}}{\dot{\st}_{n}} \left( \st_{n}^{-1} \left( t\lambda_{n} \right)\right) \lambda_{n}  \right) + \Psi_{n}^*\left( -\rmD \E\bigl(t,\su ( \st_{n}^{-1} ( t\lambda_{n} ))\bigr) \right) \right] \dd t \\
 & =\int_0^S\left[ \Psi_{n} \left( \frac{\dot{\su}}{\dot{\st}_{n}}(s)\lambda_{n} \right) + \Psi_{n}^* \Bigl(- \rmD \E(\st_{n}(s)\lambda_{n}^{-1}, \su(s)) \Bigr) \right]\frac{\dot{\st}_{n}(s)}{\lambda_{n}}\dd s ,
\end{aligned}
\]
so that
\[
\int_0^T \Bigl[ \Psi_{n}\left( \dot u_{n}(t) \right) + \Psi_{n}^*\left( - \rmD \E(t, u_{n}(t)) \right) \Bigr] \dd t = \int_0^S \bip{\Psi_n}{\lambda_n^{-1} \dot{\mathsf{t}}_n(s)}{\dot{\su}(s)}{-\rmD \ene {\mathsf{t}_n(s)\lambda_n^{-1}} {\su(s)}}\dd s.
\]
\medskip

\noindent
\textit{Step $4$: Strict convergence of $(u_n)_n$.}
\RRR Recall that we need to  prove the pointwise convergence $u_n(t) \to u(t)$ for all $t \in [0,T]$ and the convergence of the variations.
For this, it will be crucial to have the following property, that shall be verified  (even uniformly w.r.t.\ $s\in [0,S]$) both in the
stochastic (cf.\ \eqref{conse-est-tau-delta}), and in the 
 vanishing-viscosity cases 
(cf.\ \eqref{eq:explicit_tau_vv}):
\begin{equation}
\label{ripristinata}
 \tau_n^\delta(s)  \to 0 \quad \text{as } n \to \infty \quad \foraa\, s \in (0,S).
\end{equation}
 This implies that $\dot{\st}_{n}(s) \to \dot{\st}(s)$ for almost all $s\in (0,S)$, \EEE and then it   will  hold 
\[
\st_{n}(s) \to \st(s) \quad  \text{for every }  s \in [0,S]  \qquad \implies \qquad \lambda_n \to 1,  \quad 
\st_{n}^{-1}(t \lambda_{n}) \to \ss(t) \quad \text{for every } t \in [0,T].
\]
Moreover, $\dot \st_n(s) > 0$ implies that $\st_n^{-1} (0) = 0$ and $\st_n^{-1} (T_n) = S$, and so we  will  have   the desired pointwise convergence 
\[
u_{n}(t)= \su \left( \st_{n}^{-1} \left( t \lambda_{n} \right)  \right) \to \su(\ss(t))\stackrel{\eqref{eq:inverserelation}}= u(t) \qquad \text{for every } t \in [0,T].
\]
The convergence of the variations  will be  automatic, since by definition of \RRR  $u_{n}$ \EEE we will have 
 \begin{equation*}
\int_0^T A \|\dot u_{n}(t)\|_1 \dd t = \int_0^S A \|\dot{\su}(s)\|_1 \dd s  = \Var {\Psi_0} \su 0 S \stackrel{\eqref{eq:equivalenceVar}}{=} \Var {\Psi_0} u 0 T .
\end{equation*} 
 Therefore, from now on we will concentrate on the proof of the $\limsup$ estimate \eqref{limsup-thm:ineq-to-prove}. 
 \medskip

\noindent
\textit{Step $5$: strategy for  \eqref{limsup-thm:ineq-to-prove}.}   First of all, we will  show  the following \emph{pointwise} $\limsup$-inequality \EEE
\begin{equation}
\label{lemma:thm-limsup-estimates}
\limsup_{n \to \infty} \bip{\Psi_n}{\lambda_n^{-1} \dot{\mathsf{t}}_n(s)}{\dot \su(s)}{-\rmD \ene {\mathsf{t}_n(s)\lambda_n^{-1}}{\su(s)}} \leq
\RRR  \bptname (\dot \st(s),\dot \su(s), - \rmD \E ( \su(s),\st(s)) \EEE \  \ \foraa\, s \in (0,S)\,.
\end{equation}
 \RRR Secondly, we will  apply the following 
version of the Fatou Lemma
\begin{equation}
\label{Fatou}
\left.
\begin{array}{rrr}
 & \limsup_{n\to \infty} f_n(s) \leq f(s)  &  \foraa\, s \in (0,S),
\\
 & f_n(s) \leq g_n(s)  &  \foraa\, s \in (0,S),
\\
& 
g_n \to g & \text{in } L^1(0,S),
\end{array}
\right\} 
\ \Longrightarrow \ \limsup_{n\to\infty} \int_0^S f_n(s) \dd s \leq   \int_0^S f(s) \dd s,
\end{equation}
for measurable functions $(f_n)_n$ and $f$, in order to conclude 
\begin{equation}
\label{fatou-applied}
\limsup_{n\to\infty} \int_0^S \bip{\Psi_n}{\lambda_n^{-1} \dot{\mathsf{t}}_n(s)}{\dot{\su}(s)}{-\rmD \ene {\mathsf{t}_n(s)\lambda_n^{-1}} {\su(s)}}\dd s 
\leq \int_0^S \bptname (\dot \st(s),\dot \su(s), - \rmD \E ( \su(s),\st(s))) \dd s,
\end{equation}
whence \eqref{limsup-thm:ineq-to-prove} and ultimately \eqref{to-prove-indeed}. 
For the proof of \eqref{lemma:thm-limsup-estimates} and \eqref{fatou-applied}, we will distinguish the \emph{stochastic}  and the \emph{vanishing-viscosity} cases. 
 \EEE
\medskip

\noindent
\RRR \textit{Step $6a$: proof of 
 \eqref{lemma:thm-limsup-estimates} and \eqref{fatou-applied} for  $\Psi_n$ given by \eqref{Psi-n-bis} (stochastic approximation).} \EEE
 Preliminarily, we observe that,
 with the very same calculations as for 
\eqref{estimate_tau_n_delta} (cf.\ Claim 3.4 in the proof of 
Proposition \ref{l:large-deviations}),  one has 
\begin{equation}
\label{conse-est-tau-delta}
\begin{aligned}
   &  \tau_n^\delta(s)  
  \leq 
   \sqrt{ \frac{d^2\|\dot{u}(s)  \|\f^2}{n^2 \delta^2 + n^2 \left(\Psi_n^*(-\rmD \ene{\st(s)}{\su(s)})\right)^2} }  \to 0 
   \quad \text{for almost all } s \in (0,S),
   \quad \text{and thus}
   \\
   &
\sup_{s\in [0,S]}\tau_n^\delta(s)  \leq  \frac{C}{\delta n} \to 0    \quad \text{as } n\to\infty,
\end{aligned}
\end{equation}
(with a slight abuse of notation, we use the symbol $\sup$ also for an essential supremum)
where we  have exploited the Lipschitz continuity of $u$. 
 In order to prove the pointwise inequality \eqref{lemma:thm-limsup-estimates}, 
we start with the following algebraic manipulation
\begin{equation}
\label{manipulation}
\begin{split}
\bip{\Psi_n}{\lambda_n^{-1} \dot{\mathsf{t}}_n(s)}{\dot \su (s)}{-\rmD \ene {\mathsf{t}_n(s)\lambda_n^{-1}}{\su(s)}} 
=  &\; \bipname{\Psi_n}^\delta(\tau_n^\delta(s) ,\dot \su (s), - \rmD \E ( \st(s), \st(s))) - \tau_n^\delta(s) \delta\\ 
 & +  \dot \st_n (s)\Psi_n^*( -\rmD \E (\st_n(s)\lambda_n^{-1},  \su(s))) - \tau_n^\delta(s) \Psi_n^*(- \rmD \E ( \st(s), \su(s))) \\
 & +  \frac{\dot \st_n(s)}{\lambda_n} \Psi_n \left( \frac{\dot \su(s)}{\dot \st_n(s)}\lambda_n \right) - \tau_n^\delta(s) \Psi_n \left( \frac{\dot \su(s)}{\tau_n^\delta(s)} \right)
\end{split}
\end{equation}
 and prove the following three claims for   the terms on the right-hand side.  
\\
 \textit{Claim $6.a.1$: there holds}
\begin{equation}
\label{1st-ingredient-manip}
\limsup_{n \to \infty}\bipname{\Psi_n}^\delta(\tau_n^\delta(s),\dot \su(s) ,  - \rmD \E ( \st(s), \su(s)))   - \tau_n^\delta(s) \delta \leq \bptname(\dot \st(s),\dot \su (s),- \rmD \E ( \st(s), \su(s)) ) \qquad \foraa\, s \in (0,S),
\end{equation}
\textit{with $\bptname$ given by \eqref{p-lim-van-visc-bis}.} Indeed, the representation formula \eqref{supp-re} for   $\min_{\tau>0}\bipnamed{\Psi_n}\delta$  
and estimate \eqref{eq:bound_xin} (cf.\ Claim 3.3 in the proof of Prop.\ \ref{l:large-deviations})  yield
\begin{equation}
\label{to-cite-later}
\begin{split}
\bipname{\Psi_n}^\delta(\tau_n^\delta(s),\dot \su(s) , -  \rmD \E ( \st(s), \su(s))) & = \sup \left\{ \langle \xi, \dot \su(s)  \rangle  \; | \; \xi \in K_{n,\delta}^*(-\rmD \E(\st(s),\su(s))) \right\} \\ & \leq \sup \left\{ \| \dot \su(s)  \|_1 \| \xi \|\f \; | \; \xi \in K_{n,\delta}^*(-\rmD \E(\st(s),\su(s))) \right\} \\
& \leq \| \dot \su(s) \|_1 \left( A \vee \|  \rmD \E(\st(s),\su(s))  \|\f \right) + \frac1n \| \dot \su(s) \|_1 \log(2en\delta),
\end{split}
\end{equation}
and we conclude sending $n\to\infty$. 
Furthermore, we 
observe that  $\tau_n^\delta(s) \delta \to 0$ as  $n \to \infty$ thanks to the previously proved  \eqref{conse-est-tau-delta}. \EEE 
\\
 \textit{Claim $6.a.2$: there holds}
\begin{align}
\label{est_reg_case_1} & \limsup_{n \to \infty} \dot \st_n(s) \Psi_n^*( - \rmD \E (\st_n(s)\lambda_n^{-1}, \su(s))) - \tau_n^\delta(s) \Psi_n^*(- \rmD \E ( \st(s), \su(s))) \leq 0 && \foraa\, s \in (0,S).
\end{align}
\RRR Indeed,   from the uniform Lipschitz continuity of $\rmD \calE(\cdot, u)$ (cf.\ \eqref{enhanced-E}), we gather that 
\begin{equation}
\label{2n-ingredient}
\begin{aligned}
|\rmD_i \E ( \st_n(s) \lambda_n^{-1}, \su(s)) |   -  |\rmD_i \E  (\st(s),\su(s)) |
 & \leq 
\left| \rmD_i \E ( \st_n(s) \lambda_n^{-1}, \su(s))  - \rmD_i \E  (\st(s),\su(s)) \right| 
\\
 & \leq  L_{\mathsf{E}} | \st_n(s) \lambda_n^{-1} - \st(s)|
\end{aligned}
\end{equation}
for all $ i =1, \ldots, d $.
We now observe that 
\begin{equation}
\label{evvaiGio}
\begin{aligned}
| \st_n(s) \lambda_n^{-1} - \st(s)| & \leq | \st_n(s) \lambda_n^{-1} - \st(s) \lambda_n^{-1} + \st(s) \lambda_n^{-1} - \st(s)|
\\
& \leq \st(s) (1 - \lambda_n^{-1}) + \lambda_n^{-1} | \st_n(s) - \st(s) |
\\
& = T \left( 1 - \frac{1}{\lambda_n} \right) + \lambda_n^{-1}  \int_0^{s} \left(  \dot{\st}(r) \vee \tau_{n}^\delta (r) - \dot{\st}(r) \right) \dd r 
\\
 &   \leq    T \left( 1 - \frac{1}{\lambda_n} \right) +  \int_0^{s} \tau_n^\delta(r) \dd r 
\end{aligned}
\end{equation}
where we have used the fact that $\lambda_n \geq 1$, the definition of $\st_n$ \eqref{def-stn}, 
and the bound on $\sup_{s\in[0,S]} \dot{\st}(s)$, since $\st$ is Lipschitz continuous.
We also  have   
 \begin{equation}
 \label{ln-returned}
 \begin{aligned}
 T (1-\lambda_n^{-1})   =  \frac{T}{T_n} (T_n-T)  &  \leq  \left( \int_0^{\ss(0^+)} \tau_n^\delta(r) \dd r  + \int_{\ss(0^+)}^{\ss(T^-)} \left( \dot \st(r) \vee \tau_n^\delta (r) - \dot \st(r) \right) dr+  \int_{\ss(T^-)}^S  \tau_n^\delta(r) \dd r \right)
 \\
 & \leq  \left( \int_0^S  \tau_n^\delta(r) \dd r \right) \leq  \sup_{s\in [0,S]}  \tau_n^\delta(s)
 \end{aligned}
 \end{equation}
 again using the definition \eqref{def-stn} of $\mathsf{t}_n$.
 Hence, combining estimate \eqref{2n-ingredient}
 with \eqref{evvaiGio} and 
\eqref{ln-returned}, we gather
that
\begin{equation}
\label{used-later-yes}
\begin{aligned}
&
|\rmD_i \E ( \st_n(s) \lambda_n^{-1}, \su(s)) |   -  |\rmD_i \E  (\st(s),\su(s)) | \leq C   \sup_{s\in [0,S]}  \tau_n^\delta(s) 
\doteq  \bar{C}(n) \quad \text{for all } s \in [0,S], \quad \text{with }
\\
&
\sup_{n\in \N}  n  \bar{C}(n) \doteq \overline{C}<\infty,
\end{aligned}
\end{equation}
 the latter estimate due to  \eqref{conse-est-tau-delta}. 
  Therefore, 
using now the explicit formula \eqref{def:cosh_psi*} for $\Psi_n^*$ we get   for almost all $s\in (0,S)$ that 
\begin{equation}
\label{spli}
\begin{split}
 & \dot \st_n(s) \Psi_n^*(- \rmD \E ( \st_n (s)\lambda_n^{-1}, \su(s))) - \tau_n^\delta(s) \Psi_n^*(- \rmD \E ( \st(s), \su(s)))\\  &
\stackrel{(1)}{ \leq} \frac{\dot \st_n(s)}{n} e^{-nA}\sum_{i=1}^d
\cosh ( n |\rmD_i \E ( \st_n(s) \lambda_n^{-1}, \su(s)) |  )
\\
&
\stackrel{(2)}{ \leq} \frac{\dot \st_n(s)}{n} e^{-nA}\sum_{i=1}^d
 \cosh ( n |\rmD_i \E (\st(s),\su(s))| + n \overline{C}(n)) 
 \\
&  \stackrel{(3)}{\leq} \frac{d \dot \st_n(s) e^{-nA}}{2n} + \frac{\dot \st_n(s)}{2n} e^{\overline{C}} e^{-nA} \sum_{i=1}^d e^{n |\rmD_i \E (\st(s),\su(s))|} 
\\
&
\stackrel{(4)}{\leq}
\begin{cases}
\displaystyle \frac dn \dot \st_n(s)  \left(1 + e^{\overline{C}} \right)  \doteq \frac Cn   & \text{ for } s \in [\ss(0^+),\ss(T^-)], \\
\displaystyle   C \left(\frac1n + \sup_{s \in [0,S] }\tau_n^\delta(s) \Psi_n^*(-\rmD\E (\st(s),\su(s))) \right) & \text{ for } s \in [0,\ss(0^+)) \cup (\ss(T^-),S], \\
\end{cases}
\end{split}
\end{equation}
where (1) follows from the positivity of $\Psi_n^*$ and from the trivial inequality $\cosh(nx) - 1 \leq \cosh(n|x|)$, 
(2) from \eqref{2n-ingredient}, (3) from \eqref{used-later-yes},
 using that $\cosh(x) \leq \frac{e^x+1}{2}$ for all $x\geq 0$, 
 and (4) is due to the fact that 
 $\| \rmD \E (\st(s), \su(s))\|\f \leq A $ for $s\in  [\ss(0^+),\ss(T^-)] $, and to an elementary inequality on $[0,\ss(0^+)) \cup (\ss(T^-),S]$.
 Clearly, $\frac C n  \to 0$; on the other hand,  it follows again from \eqref{conse-est-tau-delta}  and  the Lipschitz continuity of $u$    that 
 \begin{equation}
 \label{again-conse}
  \sup_{s \in [0,S] }\tau_n^\delta(s) \Psi_n^*(-\rmD\E (\st(s),\su(s))) 
   \leq  \sup_{s \in [0,S] } \frac{d\|\dot{u}(s)  \|\f   \Psi_n^*(-\rmD\E (\st(s),\su(s)))  }{ n \left(\Psi_n^*(-\rmD \ene{\st(s)}{\su(s)})\right)}  \leq \frac Cn 
  \to 0 \quad \text{as } n \to \infty.
 \end{equation}
 Therefore, 
 \eqref{est_reg_case_1}  ensues.
\\
 \textit{Claim $6.a.3$: there holds}
 \begin{align}
\label{est_reg_case_2} & \limsup_{n\to \infty}\frac{\dot \st_n(s)}{\lambda_n} \Psi_n \left( \frac{\dot \su(s)}{\dot \st_n(s)}\lambda_n \right) - \tau_n^\delta(s) \Psi_n \left( \frac{\dot \su(s)}{\tau_n^\delta(s)} \right) \leq 0 && \foraa\, s \in (0,S).
\end{align}
We use the explicit formula \eqref{Psi-n-bis} for $\Psi_n$, obtaining
\[
\begin{split}
 & \frac{\dot \st_n(s)}{\lambda_n}\Psi_n\left( \frac{\dot \su(s)}{\dot \st_n(s)}\lambda_n \right)  
 \\
& \stackrel{\dot \st_n \geq \tau_n^\delta}{\leq} \frac{\tau_n^\delta(s)}{\lambda_n}\Psi_n\left( \frac{\dot \su(s)}{\tau_n^\delta(s)}\lambda_n \right)  \\
&   \leq   \frac{d \tau_n^\delta(s) \, e^{-n A}}{n } +  \sum_{i=1}^d
\left[
 \frac{\dot \su_i(s)}{n}\log \left( \lambda_n \frac{ \frac{\dot \su_i(s)}{ \tau_n^\delta(s)} + \sqrt{ \left(\frac{\dot \su_i(s)}{\tau_n^\delta(s)}\right)^2 +  \frac{e^{-2n A}}{\lambda_n^2} }}{e^{-n A}} \right) - \frac{1}{n}\sqrt{\dot \su_i(s)^2 + \left( \frac{ \tau_n^\delta(s) \, e^{-n A}}{\lambda_n}\right)^2 } \right]  \\
&    \stackrel{\lambda_n \geq 1}{\leq}   \tau_n^\delta(s) \Psi_n\left( \frac{\dot \su(s)}{\tau_n^\delta(s)} \right) + \sum_{i=1}^d\left[ \frac{\dot \su_i(s)}{n} \log(\lambda_n)- \frac{1}{n}\sqrt{\dot \su_i(s)^2 + \left( \frac{ \tau_n^\delta(s) \, e^{-n A}}{\lambda_n}\right)^2 } + \frac{1}{n}\sqrt{ \dot \su_i(s)^2 + \left( \tau_n^\delta(s) \, e^{-n A} \right)^2} \right] \; ,
\end{split}
\]
 for almost all $s\in (0,S)$,  whence
\begin{equation}
\label{cited-later-3}
\begin{aligned}
& 
 \dot \st_n(s) \frac{1}{\lambda_n}\Psi_n \left( \frac{\dot \su(s)}{\dot \st_n(s)}\lambda_n \right) - \tau_n^\delta(s) \Psi_n \left( \frac{\dot \su(s)}{\tau_n^\delta(s)} \right) 
 \\
 & 
  \leq \sum_{i=1}^d \frac{\dot \su_i(s)}{n} \log(\lambda_n)- \frac{1}{n}\sqrt{(\dot \su_i(s))^2 + \left( \frac{ \tau_n^\delta(s) \, e^{-n A}}{\lambda_n}\right)^2 } + \frac{1}{n}\sqrt{ (\dot \su_i(s))^2 + \left( \tau_n^\delta(s) \, e^{-n A} \right)^2}\,.
  \end{aligned}
\end{equation}
\RRR Observe that the right-hand side of 
\eqref{cited-later-3} tends to zero as $n\to\infty$ taking into account that 
 $
 \sup_{s\in [0,S]}\| \dot \su(s) \|\f \leq C$, that  $\lambda_n \to 1$, and  that $\sup_{s\in[0,S]}\tau_n^\delta(s) \to 0$  by \eqref{conse-est-tau-delta}.  This yields 
\eqref{est_reg_case_2} and, ultimately,  \eqref{lemma:thm-limsup-estimates}.\EEE
\par
\RRR Finally, we conclude the integrated $\limsup$-estimate \eqref{fatou-applied} by observing that the Fatou Lemma (cf.\ \eqref{Fatou}) applies:
this can be checked combining \eqref{conse-est-tau-delta}, 
\eqref{manipulation},  \eqref{to-cite-later} (taking into account that $\sup_{s\in[0,S]} \| \dot \su(s) \|_1 \leq C$),   \eqref{spli}, \eqref{again-conse}, and \eqref{cited-later-3}. \EEE
\medskip

\noindent 
\RRR \textit{Step $6b$:  proof of 
 \eqref{lemma:thm-limsup-estimates} and \eqref{fatou-applied} for  $\Psi_n$ given by \eqref{specific-1-limsup} (vanishing-viscosity approximation).} \EEE
\RRR
To simplify the notation, in what follows we shall focus on the particular case
\[
\eps_n = \frac1n\,.
\]
 Preliminarily,
we recall that, in the case \eqref{specific-1-limsup}, 
\begin{equation}
\label{explicit-psi_n}
\Psi_n^*(\xi) = \frac{1}{2\eps_n} ( \min_{\zeta \in K^*} \| \xi - \zeta \|_* )^2 =\frac{n}{2} d_*(\xi,K^*)^2,
\end{equation}
where $\| \cdot \|_*$ is the dual norm to $\| \cdot \|$, and $d_*(\cdot, K^*)$ denotes the induced distance from the set $K^*$. 
Taking into account \eqref{explicit-psi_n}, 
 we provide  a bound for $\tau_n^\delta$ from \eqref{selection-tau-n-d} again resorting to the Euler-Lagrange equation 
 \eqref{Euler-Lagrange-tau}. In the present case, it gives
%
\begin{equation}
\label{eq:explicit_tau_vv}
( \tau_n^\delta(s))^2 = \frac{  \| \dot \su(s) \|^2}{2n \delta + n^2 d_*(-\rmD \ene{\st(s)}{\su(s)},K^*)^2}
\qquad \foraa\, s \in (0,S)\,.
\end{equation}
  In what follows we will take 
\begin{equation}
\label{choice-delta-n}
\delta=\delta_n \quad \text{such that} \quad \delta_n \to \infty  \  \text{  and  } \  \delta_n \frac1n \to 0 \ \text{ as } n \to\infty,
\end{equation}
but we will continue to  write $\delta$ in place of $\delta_n$ for shorter notation. \EEE

\RRR In order to show \eqref{lemma:thm-limsup-estimates}, we start from the very same algebraic manipulation as in 
\eqref{manipulation} and prove that the terms on the right-hand side converge to the desired limit. 
We observe that  \EEE
\[
\begin{split}
\bipname{\Psi_n}^\delta(\tau_n^\delta(s),\dot \su(s) , -  \rmD \E ( \st(s), \su(s)))   =  &\; \tau_n^\delta(s) \Psi_n \left( \frac{\dot \su(s)}{\tau_n^\delta(s)} \right) + \tau_n^\delta(s) \Psi_n^* (-\rmD \E(\st(s),\su(s))) + \tau_n^\delta(s) \delta \\
 \stackrel{\eqref{eq:explicit_tau_vv}}{=} & \psiri( \dot \su(s)) + \frac{\| \dot \su(s) \|}{2n} \sqrt{2n\delta + n^2 d_*(-\rmD \E(\st(s),\su(s)), K^*)^2} 
 \\ 
 &\quad
 + 
 \frac{n \| \dot \su(s) \| d_*(-\rmD \E(\st(s),\su(s)),K^* )^2}{2 \sqrt{2n\delta + n^2 d_*(-\rmD \E(\st(s),\su(s)), K^*)^2} } + \tau_n^\delta(s) \delta \\
\stackrel{n\to\infty}{\to} & \; \psiri(\dot \su(s)) + \| \dot \su(s) \| d_*(-\rmD \E(\st(s),\su(s)),K^*) \RRR 
\\
\leq 
&
\RRR \bpt{\dot{\st}(s)}{\dot{\su}(s)}{-\rmD \E(\st(s),\su(s))} 
\end{split}
\]
where the last inequality follows from the fact that   $\rmD \E(\st(s),\su(s)) \in K^*$ when $\dot \st(s) > 0$. Thus we conclude  the analogue of 
\eqref{1st-ingredient-manip}.  
Moreover, observe that, as a consequence of
the scaling for $\delta_n$ from
 \eqref{choice-delta-n}, we have 
\begin{equation}
\label{2nd-van-visc-manip}
\delta \sup_{s\in [0,S]} \tau_n^{\delta}(s) \to 0 \quad \text{as } n \to \infty. 
\end{equation}

\par
We then proceed to show the counterpart to \eqref{est_reg_case_1}.  The very same calculations as in 
\eqref{2n-ingredient}  (cf.\ also \eqref{ln-returned}),  \EEE give for every $s\in [0,S]$
\begin{equation}
\label{quoted-later-4}
\left| \rmD_i \E ( \st_n(s) \lambda_n^{-1}, \su(s))  -  \rmD_i \E  (\st(s),\su(s)) \right|
\leq C \sup_{s\in [0,S]} 
\tau_n^\delta(s)  \leq \frac{C}{\sqrt{n \delta}}\,.
\end{equation}
Resorting  now to  the explicit formula   \RRR  \eqref{explicit-psi_n}  for $\Psi_n^*$  (and using    $\xi_n(s) $
 and $\xi(s)$ as place-holders for 
   $ -\rmD \E ( \st_n(s) \lambda_n^{-1},\su(s))$
 and $ -\rmD \E ( \st(s), \su(s))$, respectively, to avoid overburdening notation),
 we get
\begin{equation}
\label{rompicapo}
\begin{aligned}
\dot \st_n(s) \Psi_n^*(\xi_n(s)) - \tau_n^\delta(s) \Psi_n^*( \xi(s))
& \stackrel{(1)}{=}
 \dot \st_n(s)   \frac n2   d_*( \xi_n(s),K^*)^2 
 \\
 &
  \stackrel{(2)}{\leq} 
    \dot \st_n(s) \frac n2   \| \xi_n(s) -\xi(s)  \|_*^2
    \\
    &
  \stackrel{(3)}{\leq} 
 \frac{C}{\delta} \qquad \foraa\, s \in  (\ss(0^+),\ss(T^-)). 
 \end{aligned}
\end{equation}
 In \eqref{rompicapo}, 
  (1) and (2) are due to the fact that $\rmD \E ( \st(s), \su(s)) \in K^*$  for almost all 
$ s \in  (\ss(0^+),\ss(T^-))$,  so that  $\Psi_n^*( \xi(s))=0$, 
and (3) to estimate \eqref{quoted-later-4}.  
To prove the analogue of  \eqref{est_reg_case_1},  we will first treat the case in which     $s \in [0,\ss(0^+)) \cup (\ss(T^-),S]$  (where 
$\dot{\st}_n(s) = \tau_n^\delta(s)$).
Here,  we use  the Lipschitz 
estimate \eqref{quoted-later-4} 
 and the explicit formula for $\Psi_n^*$. Thus,  we 
 find
\begin{equation}
\label{rompicapo-2}
\begin{split}
&
\tau_n^\delta(s) \left( \Psi_n^*(\xi_n(s)) - \Psi_n^*(\xi(s)) \right) 
\\
& = \frac{n}2 \tau_n^\delta(s) \left(
 d_*(\xi_n(s) , K^* )^2 - d_*( \xi(s), K^* )^2   \right) 
 \\
& 
 \leq  \frac{n}2 \tau_n^\delta(s) \left( \left( d_*(\xi_n(s) , \xi(s)) + d_* ( \xi(s), K^*) \right)^2
  - d_*( \xi(s), K^* )^2   \right) \\
   & \leq  \frac{n}2 \tau_n^\delta(s) \left( d_* ( \xi_n(s) , \xi(s) )^2 + 2d_* (\xi (s), K^*)  d_* ( \xi_n(s) , \xi (s))  \right) \\   
&  \stackrel{\eqref{quoted-later-4}}{\leq}   n \tau_n^\delta(s)  \left( \frac{C}{n\delta} + \frac{C}{\sqrt{n \delta}}\right)  \quad \text{for all }  s \in  [0,\ss(0^+)) \cup (\ss(T^-),S]
\end{split}
\end{equation}
 Combining \eqref{rompicapo} and \eqref{rompicapo-2}
 we infer \eqref{est_reg_case_1}, since $\delta=\delta_n \to \infty$  and $\frac{\delta_n}{n} \to 0 $ as $n\to\infty$.  
 In order to prove the analogue of \eqref{est_reg_case_2},  
we use the explicit formula \eqref{specific-1-limsup}
 of  $\Psi_n$ obtaining
\[
\begin{split}
\frac{\dot \st_n(s)}{\lambda_n}\Psi_n\left( \frac{\dot \su(s)}{\dot \st_n(s)}\lambda_n \right)
  & \stackrel{\dot \st_n \geq \tau_n^\delta}{\leq} \frac{\tau_n^\delta(s)}{\lambda_n}\Psi_n\left( \frac{\dot \su(s)}{\tau_n^\delta(s)}\lambda_n \right)  = \psiri(\dot \su(s)) + \frac{\lambda_n}{2n\tau_n^\delta(s)} \| \dot \su(s) \|^2 \\
& = \tau_n^\delta(s) \Psi_n \left( \frac{\dot \su(s)}{\tau_n^\delta(s)} \right) + (\lambda_n - 1) \frac{\| \dot \su(s) \|^2}{2n \tau_n^\delta(s)}.
\end{split}
\]
\RRR It follows from \eqref{eq:explicit_tau_vv}  and \eqref{choice-delta-n} that, for $n$ sufficiently big,   \EEE
\[
n \tau_n^\delta(s) \geq \frac{\| \dot \su(s) \|}{2 + d_*(-\rmD \E(\st(s),\su(s)), K^*)},
\]
hence we deduce that 
\begin{equation}
\label{last-tassello}
\frac{\dot \st_n(s)}{\lambda_n}\Psi_n\left( \frac{\dot \su(s)}{\dot \st_n(s)}\lambda_n \right) - \tau_n^\delta(s) \Psi_n \left( \frac{\dot \su(s)}{\tau_n^\delta(s)} \right) \leq  (\lambda_n - 1) C \qquad \foraa\, s \in (0,S).
\end{equation}
 \RRR Then,  \eqref{est_reg_case_2}, and ultimately  \eqref{lemma:thm-limsup-estimates}, ensue, since $\lambda_n \to 1$.  
 \par
 It remains to verify the integrated inequality \eqref{fatou-applied}.  Taking into account 
 \eqref{manipulation},  we observe that the 
 \emph{uniform} (w.r.t.\ $s\in (0,S)$) estimates \eqref{2nd-van-visc-manip},  \eqref{rompicapo},  \eqref{rompicapo-2},  and \eqref{last-tassello}, immediately allow for the application of the Fatou Lemma \eqref{Fatou}. Finally, we observe that, again by  the general representation formula 
 \eqref{supp-re}, there holds 
\[
\begin{split}
 \bipname{\Psi_n}(\tau_n^\delta(s),\dot \su (s), -  \rmD \E ( \st(s), \su(s))) & = \sup \left\{ \langle \xi, \dot \su (s) \rangle  \; | \; \xi \in K_{n,\delta}^*(-\rmD \E(\st(s),\su(s))) \right\}
\\
&
 \leq \sup \left\{ \| \dot \su(s)  \|_1 \| \xi \|\f \; | \; \xi \in K_{n,\delta}^*(-\rmD \E(\st(s),\su(s))) \right\} \\
& \leq C \| \dot \su(s) \|_1 \left( \| \rmD \E(\st(s),\su(s)) \|\f + \sqrt{ \frac{\delta}{n} } \right),
\end{split}
\]
and we conclude the integral estimate from the boundedness of $\| \dot \su(s) \|_1, \| \rmD \E (\st(s),\su(s))  \|\f$ and sending $n \to \infty$.
Thus, \eqref{fatou-applied} is proven.
\medskip

\noindent
\textit{Step $7$: recovery sequence for a general curve $u$ and conclusion of the proof.}
Now we  construct a recovery sequence for a curve with   countably many  jumps,  following the argument 
from 
the proof of \cite[Thm.\ 4.2]{BonaschiPeletier14}. 
 Given the jump set $\ju{u}$, fix $\e > 0$, consider a countable set   $\{ t^i \}_{i\in I}   \subseteq \ju u \cup \{0,T\}$ (with $t^i < t^{i+1}$ for all $i\in I$)  such that 
\begin{equation}
\label{eqn:condition_jump_e}
\mathrm{Jmp}_{\bptname,\calE}(u;[0,T] \setminus \{ t^i \} ) < \e,
\end{equation}
and such that the interval $[0,T]$ can be written as the union of disjoint subintervals
\[
[0,T]=\bigcup_{i\in I} \Sigma^i  \qquad \text{ where } \Sigma^i = [t^i,t^{i+1}].
\]
Then, we  let $t_n^i=\st_n(\ss(t^i))$, and  set
\[
\lambda_{n}^i : =\frac{t^{i+1}_n - t^i_n}{t^{i+1}-t^i}.
\]
We define the recovery sequence by
\begin{equation}
\label{eq:def_gen_rec_seq}
u_n(t):=\su \left( \st_{n}^{-1} \left( \lambda^i_n(t - t^i) + t^i_n \right) \right) \qquad \text{ for } t \in \Sigma^i,
\end{equation} 
so that
\begin{equation*}
\dot u_{n}(t)=\frac{\dot{\su}}{\dot{\st}_{n}}\left(  \st_{n}^{-1} \left(  \lambda^i_n(t - t^i) + t^i_n \right) \right) \lambda^i_{n} \qquad \text{ for } t \in (\Sigma^i)^\circ.
\end{equation*}
We have now that
\begin{align*}
\int_0^T \Bigl[ &\Psi_{n}\left( \dot u_{n}(t) \right) + \Psi_{n}^*\left({-} \rmD \E(t,u_{n}(t)) \right) \Bigr]\dd t \\
&= \sum_i \int_{\Sigma^i} \left[ \Psi_{n}\left( \frac{\dot{\su}}{\dot{\st}_{n}} \left( \st_{n}^{-1} \left( \lambda^i_n(t - t^i) + t^i_n \right)\right) \lambda^i_{n}  \right) + \Psi_{n}^*\left({-} \rmD \E\bigl(t,\su ( \st_{n}^{-1} (  \lambda^i_n(t - t^i) + t^i_n ))\bigr) \right) \right] \dd t \\
&=\sum_i \int_{\ss(t^i)}^{\ss(t^{i+1})}\left[ \Psi_{n} \left( \frac{\dot{\su}}{\dot{\st}_{n}}(s)\lambda^i_{n} \right) + \Psi_{n}^* \Bigl( {-}\rmD \E((\lambda^i_{n})^{-1}(\st_{n}(s)-t_{n}^i)+ t^i, \su(s)) \Bigr) \right]\frac{\dot{\st}_{n}(s)}{\lambda^i_{n}}\dd s.
\end{align*}
Applying  estimate \eqref{lemma:thm-limsup-estimates} in every subinterval  $[\ss(t^i),\ss(t^{i+1})]$ and Fatou's Lemma  \RRR (cf.\ 
\eqref{Fatou}) \EEE on
 the whole interval $[0,S]$,  we obtain inequality~\eqref{limsup-thm:ineq-to-prove}. 
\par
The convergence of the variations  again follows  by  the definition of $u_n$. \\
The pointwise convergence $u_n(t) \to u(t)$ for $t \in [0,T] \setminus \ju u $ is again trivial. The following calculations show that, by construction, the convergence holds also in the points $\{ t^i \} \subseteq \ju u $. Indeed, 
\[
u_{n}(t^i)\stackrel{\eqref{eq:def_gen_rec_seq}}{=} \su \left( \st_{n}^{-1} \left( t^i_n \right) \right) = \su \left( \st_{n}^{-1} \left( \st_n(\ss(t^i)) \right) \right) = \su \left( \ss(t^i) \right) \stackrel{\eqref{eq:inverserelation}}{=} u(t^i),
\]
while from \eqref{eqn:condition_jump_e} and the convergence of the variations we have that
\[
\lim_{n \to \infty} |u_n(t) - u(t) | < \e  \qquad \text{for all  }  t \in \ju u \setminus \{ t^i \}.
\]
In fact, the recovery sequence $u_n$ has a hidden dependence on $\e$. Then taking $\e = n^{-1}$ we define a new recovery sequence, that we keep labelling $u_n$, and sending  $n \to \infty$ ($\e$ to zero) we conclude. 
\medskip

\noindent This finishes the proof of Theorem \ref{thm:limsupgenerale}. 
\QED

\bibliographystyle{alpha}
\bibliography{multi_D_2014}

\end{document}